\documentclass[a4paper,reqno]{amsart}
\usepackage{amsfonts}

\flushbottom
\usepackage{caption}

\usepackage{hyperref}

\usepackage[textwidth=408pt,textheight=658pt,heightrounded, headsep=12pt,vmarginratio=1:1]{geometry}
\usepackage[T1]{fontenc} 
\usepackage[utf8]{inputenc}
\usepackage[french, italian, english]{babel}
\usepackage{amsmath,amssymb,amsthm,mathrsfs,esint}
\usepackage{mathtools}

\newcommand{\R}{{\mathbb R}}
\newcommand{\N}{{\mathbb N}}
\newcommand{\Z}{{\mathbb Z}}
\newcommand{\km}{{k^{-1}}}

\newcommand{\tq}{{\tilde{q}}}
\newcommand{\qz}{{q_z^k}}
\newcommand{\tqz}{{\tilde{q}_z^k}}
\newcommand{\Qz}{{Q_z^k}}

\newcommand{\Rn}{{\R}^n}
\newcommand{\dx}{\, \mathrm{d} x}
\renewcommand{\dh}{\, \mathrm{d} \mathcal{H}^{n-1}}
\newcommand{\hn}{\mathcal{H}^{n-1}}
\newcommand{\Sn}{{\mathbb{S}^{n-1}}}
\newcommand{\ol}{\overline}
\newcommand{\sm}{\setminus}

\newcommand{\tr}{\mathrm{tr}\,}
\newcommand{\xy}{{^\xi_y}}
\newcommand{\Mnn}{{\mathbb{M}^{n\times n}_{sym}}}

\newcommand{\dod}{{\partial_D \Omega}}
\newcommand{\don}{{\partial_N \Omega}}
\newcommand{\dom}{{\partial \Omega}}
\newcommand{\weak}{\rightharpoonup}

\DeclareMathOperator*{\aplim}{ap\,lim}


\theoremstyle{plain}
\begingroup
\theoremstyle{plain}
\newtheorem{theorem}{Theorem}[section]

\newtheorem{proposition}[theorem]{Proposition}
\newtheorem{lemma}[theorem]{Lemma}
\theoremstyle{definition}
\newtheorem{definition}[theorem]{Definition}
\theoremstyle{remark}
\newtheorem{remark}[theorem]{Remark}

\endgroup

\numberwithin{equation}{section}

\title[A density result in $GSBD^p$ and applications to brittle fractures]{A density result in $GSBD^p$ with applications to the approximation of brittle fracture energies}
\author{Antonin Chambolle \and Vito Crismale}
\address{CMAP, \'Ecole Polytechnique, 91128 Palaiseau Cedex, France}
\email[Antonin Chambolle]{antonin.chambolle@cmap.polytechnique.fr}
\email[Vito Crismale]{vito.crismale@polytechnique.edu}

\begin{document}
\begin{abstract}
We prove that any function in $GSBD^p(\Omega)$, with $\Omega$ a $n$-dimensional open bounded set with finite perimeter, is approximated by functions $u_k\in SBV(\Omega;\Rn)\cap L^\infty(\Omega;\Rn)$ whose jump is a finite union of $C^1$ hypersurfaces.
The approximation takes place in the sense of Griffith-type energies $\int_\Omega W(e(u)) \dx +\hn(J_u)$, $e(u)$ and $J_u$ being the approximate symmetric gradient and the jump set of $u$, and $W$ a nonnegative function with $p$-growth, $p>1$.
The difference between $u_k$ and $u$ is small in $L^p$ outside a sequence of sets $E_k\subset \Omega$ whose measure tends to 0 and if $|u|^r \in L^1(\Omega)$ with $r\in (0,p]$, then $|u_k-u|^r \to 0$ in $L^1(\Omega)$.
Moreover,
an approximation property for the (truncation of the) amplitude of the jump holds.
We apply the density result to deduce $\Gamma$-convergence approximation \emph{à la} Ambrosio-Tortorelli for Griffith-type energies with either Dirichlet boundary condition or a mild fidelity term, such that minimisers are \emph{a priori} not even in $L^1(\Omega;\Rn)$.
\end{abstract}
\maketitle

{\small
\keywords{\textbf{Keywords:}  generalised special functions of bounded deformation, strong approximation, brittle fracture, $\Gamma$-convergence, free discontinuity problems 

\bigskip
\subjclass{\textbf{MSC 2010:} 
49Q20,     
74R10,     
26A45,  	
49J45,  	   
74G65.  
}}

\tableofcontents

\section{Introduction}

A fundamental idea in the variational approach to fracture mechanics is that the formation of fracture is the result of the competition between the surface energy spent to produce the crack and the energy stored in the uncracked region. 
This idea dates back to the pioneering work of Griffith \cite{Griffith} and is the core of the model for quasistatic crack evolution proposed by Francfort and Marigo \cite{FraMar98}, which, in turn, is 
the starting point for a large number of variational models 
(see e.g.\ \cite{DMToa02, FraLar03, Cha03, BabGia14, FriSol16} and \cite{DMFraToa07, DMLaz10, Laz11} for brittle fracture in the small and finite strain framework, respectively, and e.g.\ \cite{CagToa11, DMZan07, CriLazOrl17} for cohesive fracture). 
For brittle fracture models, in small strain assumptions,    
the sum of the 
bulk energy and of the surface energy (that in brittle fracture is nothing but the measure of the crack) 
has usually the form
\begin{equation}\tag{G}\label{1207171129}
\int \limits_\Omega W(e(u)) \dx + \hn(J_u)\,
\end{equation}
in a \emph{reference configuration} $\Omega\subset \Rn$.
This depends on
the \emph{displacement} $u\colon \Omega \to \Rn$
through $e(u)$, the \emph{symmetric approximate gradient} of $u$, and $J_u$, the \emph{jump set} of $u$, that represents the crack set. 
 In order to give sense to \eqref{1207171129}, one assumes that $u$ admits a measurable  (with respect to the Lebesgue measure $\mathcal{L}^n$) \emph{symmetric approximate gradient} $e(u)(x) \in \Mnn$ for $\mathcal{L}^n$-a.e.\ $x\in \Omega$, characterised by
\begin{equation*}
\aplim\limits_{y\to x} \frac{\big(u(y)-u(x)-e(u)(x)(y-x)\big)\cdot (y-x)}{|y-x|^2}=0\,,
\end{equation*}
(see \eqref{def:aplim} for definition of approximate limit)
and that 
$J_u$  is countably $(\hn, n{-}1)$ rectifiable, where $J_u$ is defined as the set of discontinuity points $x$ where $u$ has one-sided approximate limits $u^+(x)\neq u^-(x)$ with respect to a suitable direction $\nu_u(x)$ normal to $J_u$.
The function $W$ is required to be convex with $p$-growth, with $p>1$ (cf.\ e.g.\ \cite[Section~2]{FraMar98} in the framework of elastic bulk energies, and \cite[Sections~10 and 11]{Hut} and references therein for a connection with elasto-plastic materials).

The space $BD(\Omega)$ of \emph{functions of bounded deformation} is an important example of function space in which \eqref{1207171129} is well defined. 
Employed in the mathematical modelling of small strain elasto-plasticity (see e.g.\ \cite{TemStr, Suquet1981, KohnTemam, Tem}) it consists of the functions $u\in L^1(\Omega;\Rn)$ whose symmetric distributional derivative $(\mathrm{E}u)_{ij}:=\frac{1}{2}( \mathrm{D}_i u_j + \mathrm{D}_j u_i)$ is a (matrix-valued) measure with finite total variation in $\Omega$. In particular (see for instance \cite{AmbCosDM97}), $J_u$ is countably $(\hn, n{-}1)$ rectifiable and
$\mathrm{E}u=\mathrm{E}^a u+ \mathrm{E}^c u+\mathrm{E}^j u$,
where $\mathrm{E}^a u = e(u) \mathcal{L}^n$, the \emph{Cantor part} $\mathrm{E}^c$ is singular with respect to $\mathcal{L}^n$ and vanishes on Borel sets of finite $\hn$ measure, and $\mathrm{E}^j u$ is concentrated on $J_u$.

In view of the assumptions on \eqref{1207171129}, and since in particular it gives no control on the Cantor part of $\mathrm{E}u$, in the present context it is useful to focus on the space $SBD(\Omega)$ of $BD$ functions with null Cantor part, introduced in \cite{AmbCosDM97}, and on its subspace
\begin{equation*}
SBD^p(\Omega):=\{u\in SBD(\Omega)\colon e(u)\in L^p(\Omega;\Mnn),\, \hn(J_u)<\infty\}\,.
\end{equation*}
Indeed, the existence of minimisers for \eqref{1207171129} is guaranteed in $SBD^p(\Omega)$ by the compactness result \cite[Theorem~1.1]{BelCosDM98}, provided one has an \emph{a priori} bound for $u$ in $L^\infty(\Omega;\Rn)$. Unfortunately, it is hard to obtain such a bound, even if the total energy includes additional lower order terms.

To overcome this drawback, Dal Maso introduced in \cite{DM13} the spaces $GBD$ and $GSBD$ of the \emph{generalised} $BD$ and $SBD$ functions, respectively (see Definition~\ref{def:GBD} for its definition, based on properties of one-dimensional slices). Every $GBD$ function admits a measurable symmetric approximate gradient and has a countably $(\hn,n{-}1)$ rectifiable jump set, so that \eqref{1207171129} makes sense. Moreover, the compactness result \cite[Theorem~11.3]{DM13} requires a very mild control for sequences in $GSBD^p$ (the space of $GSBD$ functions with $e(u)$ $p$-integrable and $\hn(J_u)$ finite), namely that $\psi_0(|u_k|)$ is bounded in $L^1$ for some $\psi_0$ nonnegative, continuous, increasing and unbounded. 
This gives compactness with respect to the convergence in measure of 
minimising sequences for total energies with main term \eqref{1207171129} plus a lower order \emph{fidelity term} of type $\int_\Omega \psi_0(|u-g|)\dx$, for a suitable datum $g$, so that the displacements are not even forced to be  in $L^1$. 

Notice that, differently from the case of image reconstruction, 
a fidelity term in the total energy is not in general meaningful in fracture mechanics. In particular, the original formulation in \cite[Section~2]{FraMar98} considers the energy \eqref{1207171129} only supplemented with a Dirichlet boundary condition. We remark that a Mumford-Shah-type energy, obtained from the \emph{Mumford-Shah image segmentation functional} \cite{MumSha, DeGCarLea} by replacing the $L^2$ fidelity term with a Dirichlet boundary condition, describes brittle fractures in the \emph{generalised antiplane} setting of e.g.\ \cite{FraLar03}.

An interesting 
issue is to provide $\Gamma$-convergence approximations, in the spirit of Ambrosio and Tortorelli \cite{AmbTorCPAM, AmbTorUMI}, for energies of the form \eqref{1207171129} plus some compliance conditions on the displacement. In \cite{AmbTorCPAM, AmbTorUMI} the Mumford-Shah functional is approximated by means of elliptic functionals, depending on the displacement and on a so-called \emph{phase field} variable, whose minimisers are easier to compute.
This result has been largely employed to numerically handle problems 
both in 
image reconstruction and in fracture mechanics (see for instance \cite{BouFraMar00, Bou07, BurOrtSul13}). In the vector-valued case, approximations \emph{à la} Ambrosio-Tortorelli have been proven by Chambolle \cite{Cha04, Cha05Add} and Iurlano \cite{Iur14} for the restriction of \eqref{1207171129} (assuming $W$ quadratic) to $SBD^2(\Omega)\cap L^2(\Omega;\Rn)$ and $GSBD^2(\Omega)\cap L^2(\Omega;\Rn)$, respectively. A crucial point in the proof of the $\Gamma$-limsup inequality is to approximate, in the sense of (limit) energy, any displacement 
by a sequence of functions in $SBV(\Omega;\Rn) \cap L^\infty(\Omega;\Rn)$ whose jump is a finite union of $C^1$ hypersurfaces.  For these displacements  it is not difficult to find a recovery sequence, 
and one concludes by a diagonal argument. 

In the scalar setting, the first results in
this direction are found in~\cite{BraChP96,DibSer97}.
Furthermore, by \cite[Theorem~3.1]{CorToa99} (see also \cite{AmarDeCicco})  one may consider approximating functions whose jump is \emph{essentially closed} and \emph{polyhedral}, which are of class $W^{m,\infty}$, for every $m\in \N$, in the complement of the (closure of the) jump. 

We should recall here that the (polyhedral) approximation of rectifiable
 sets of codimension one (such as jumps sets), or of more general rectifiable
currents of any dimension, is an old and 
important issue in Geometric Measure Theory~\cite{Fed, White} (see also the recent \cite{BCG17} for partitions). However the constructions in these
works are not particularly
adapted to the approximation of a whole function, nor its trace on the jump.

The techniques we use here for approximating the jump set are relatively
standard and mostly derived from~\cite{Cha04,BraChP96,Iur14}
(see also~\cite{CFI17Density} for a recent and simpler variant of~\cite{Cha04,Iur14}
based on a finite-elements discretization). 
As in~\cite{BraChP96,Iur14}, we also approximate strongly the trace;
in our case it needs a carefully built extension of the displacements
on both sides of the jump.
 A refinement for the approximation of trace is obtained in \cite{Cri19}, giving a density theorem for $SBD^p$ functions in the $BD$ norm. An analogous approximation for $SBV^p$
functions in the $BV$ norm  
is established in \cite{DPFusPra17}: this 
is based on convolutions with variable kernels, depending on the distance from a compact set containing (most of) $J_u$.

The most difficult part in  the present  setting
is the approximation of the bulk energy, which we provide without
any assumption on the integrability of the function itself.
It is based on a new approximation strategy
which was made possible thanks to the estimates in~\cite{CCF16}.

The only other work which also removes any integrability assumption
on the displacement is the Friedrich's work
~\cite{FriPWKorn}: there it is shown
that
for any $u\in GSBD^2(\Omega)$, with $\Omega\subset \R^2$, there exists a sequence $u_k$ in $SBV(\Omega;\R^2) \cap L^\infty(\Omega;\R^2)$ with regular jump, converging in measure to $u$ and such that $e(u_k)\to e(u)$ in $L^2(\Omega;\mathbb{M}^{2\times 2}_{sym})$ and $\hn(J_{u_k}\triangle J_u)\to 0$, $\triangle$ denoting the symmetric difference of sets (cf.\ \cite[Theorem~2.5]{FriPWKorn}). This follows from a piecewise Korn inequality, obtained with a careful analysis of the jump set of $GSBD$ functions in a $2$-dimensional setting.

 Our main result is the following density theorem:
\begin{theorem}\label{teo:main}
Let $\Omega\subset \Rn$ be a bounded open set with finite perimeter  and $\partial \Omega$ 
$(\hn,n{-}1)$-countably rectifiable, 
$ p > 1$,   
and $u\in GSBD^p(\Omega)$.   
Then there exist $u_k\in SBV^p(\Omega;\Rn)\cap L^\infty(\Omega; \Rn)$ and $E_k\subset \Omega$ such that each
$J_{u_k}$ is closed in $\Omega$ and included in a finite union of closed connected pieces of $C^1$ hypersurfaces, $u_k\in W^{1,\infty}(\Omega\setminus J_{u_k}; \Rn)$, and:
\begin{subequations}\label{eqs:main}
\begin{align}
\lim_{k\to \infty}\mathcal{L}^n(E_k&)= \lim_{k\to \infty}\int\limits_{\Omega\setminus E_k} |u_k -u|^p \dx = 0\,,\label{1main}\\
 e(u_k) &\to e(u)  \quad\text{in } L^p(\Omega;\Mnn) \,, \label{2main}\\
 &\hn(J_{u_k}\triangle J_u)\to 0 \,,\label{3main}\\
\int \limits_{J_{u_k}\cup J_u} \hspace{-1em}\tau(|u_k^\pm-u^\pm|)& \dh + \int \limits_{\partial \Omega} \tau(|\tr (u_k- u)|)\dh \to 0\,,  \label{4main}
\end{align}
for $\tau\in C^1(\R)$ with $-\tfrac12 \leq \tau \leq \tfrac12$, $0\leq \tau' \leq 1$. 
In particular, $u_k$ converge to $u$ in measure in~$\Omega$.
Moreover, if $
\int\limits_\Omega \psi(|u|)\dx
$ is
finite 
for $\psi\colon [0,\infty)\to [0,\infty)$ 
increasing, 
continuous, with (for $C_\psi >0$)
\begin{equation}\tag{HP$\psi$}\label{hppsi}
\psi(0)=0\,,\qquad \psi(s+t)\leq C_\psi \big(\psi(s)+\psi(t)\big)\,,\qquad \psi(s)\leq C_\psi (1+|s|^p)\,,\qquad \lim_{s\to \infty} \psi(s)=\infty\,,
\end{equation}  
then
\begin{equation}\label{5main}
\lim_{k\to \infty} \int\limits_{\Omega}\psi(|u_k-u|)\dx =0\,.
\end{equation}
\end{subequations}
\end{theorem}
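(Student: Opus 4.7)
The plan is to localize on a fine grid and reduce to the Sobolev case cube by cube. Fix $\varepsilon_k\to 0$, tile $\Rn$ by closed cubes $Q_i^k$ of side $\varepsilon_k$, and on each cube intersecting $\Omega$ apply a piecewise Poincaré--Korn estimate of the type proved in~\cite{CCF16}: for $u\in GSBD^p(Q_i^k)$ there exist an exceptional set $\omega_i^k\subset Q_i^k$ with $\mathcal{L}^n(\omega_i^k)\leq C(\hn(J_u\cap Q_i^k))^{n/(n-1)}$ and a Sobolev modification $\bar u_i^k$ of $u$, coinciding with $u$ outside $\omega_i^k$, whose symmetric gradient satisfies $\|e(\bar u_i^k)\|_{L^p(Q_i^k)}\leq C\|e(u)\|_{L^p(Q_i^k)}$ up to subtracting an infinitesimal rigid motion on a suitable connected component of $Q_i^k$ minus a piece of $J_u$. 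Call a cube \emph{bad} if $\hn(J_u\cap Q_i^k)\geq \varepsilon_k^{n-1}$; the union of bad cubes has measure at most $C\varepsilon_k\hn(J_u)\to 0$ and will, together with the $\omega_i^k$, form the set $E_k$.

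\textbf{Construction of $u_k$.} On good cubes, $\bar u_i^k$ is Sobolev, so I regularize it by convolution at scale $\varepsilon_k$, using careful two-sided extensions of $u$ across $J_u$ in the spirit of~\cite{BraChP96,Iur14} in order to control also the one-sided traces. Gluing the local smooth pieces across grid faces gives a candidate function whose jump set is contained in the union of the $(n{-}1)$-faces of the grid separating different data; a truncation in amplitude makes it $L^\infty$. The resulting polyhedral jump is finally smoothed into a finite union of closed connected pieces of $C^1$-hypersurfaces via an adaptation of~\cite{CorToa99} that preserves the two-sided traces up to an $o(1)$ error, producing the $u_k\in SBV^p(\Omega;\Rn)\cap L^\infty(\Omega;\Rn)$ in the statement.

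\textbf{Convergences.} Outside $E_k$, $u_k$ equals a mollification at scale $\varepsilon_k$ of a local Sobolev function agreeing with $u$, so~\eqref{1main} follows by truncating $u$ at a large threshold, using standard mollifier estimates on the truncation, and showing that the contribution of its complement is negligible. For~\eqref{2main} one combines $\|e(\bar u_i^k)-e(u)\|_{L^p(Q_i^k)}\to 0$ on good cubes (continuity of $e(u)\in L^p$ under regularization) with the vanishing measure of bad cubes. For~\eqref{3main}, $J_{u_k}$ is essentially the projection of $J_u$ onto the grid faces together with a negligible contribution from bad-cube boundaries; the rectifiability of $J_u$ yields that the symmetric difference in $\hn$ vanishes. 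Property~\eqref{4main} follows from the two-sided extension and the boundedness of $\tau$, since convergence in measure of the traces (guaranteed by construction) becomes $L^1$-convergence after composition with $\tau$. Finally~\eqref{5main} splits as $\int_{\Omega\setminus E_k}\psi(|u_k-u|)\dx+\int_{E_k}\psi(|u_k-u|)\dx$: the first integral vanishes by~\eqref{1main}, the bound $\psi(s)\leq C_\psi(1+s^p)$ and dominated convergence; the second is bounded via subadditivity by $C_\psi(\int_{E_k}\psi(|u|)\dx+\int_{E_k}\psi(|u_k|)\dx)$, and it suffices to construct $u_k$ so that $\psi(|u_k|)$ is equi-integrable, e.g.\ by performing local modifications with rigid motions controlled by local averages of $|u|$.

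\textbf{Main obstacle.} The hardest step is the strong $L^p$-convergence $e(u_k)\to e(u)$ \emph{without} any integrability assumption on $u$ itself; this is precisely what the estimates of~\cite{CCF16} enable, by producing cube-wise Sobolev substitutes whose symmetric gradients are controlled by $e(u)$ alone (and not by any norm of $u$). A secondary difficulty is keeping $J_{u_k}$ a \emph{finite} (not merely locally finite) union of closed $C^1$-pieces while still covering, in the limit, all of $J_u$ in $\hn$-measure: this is handled by working at a single scale $\varepsilon_k$ per index $k$ and partitioning each grid face into finitely many $C^1$-patches, with the $\hn$-error in both $J_{u_k}\setminus J_u$ and $J_u\setminus J_{u_k}$ controlled via the rectifiability of $J_u$ and the finite-perimeter assumption on~$\partial\Omega$.
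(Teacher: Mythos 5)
There is a genuine gap: your single-scale grid construction is, in substance, only the intermediate ``rough'' approximation (Theorem~\ref{teo:rough} of the paper), and it cannot deliver \eqref{3main} and \eqref{4main}. In a bad cube of side $\varepsilon_k$ you relocate the discontinuity to the cube's boundary, whose $\hn$-measure is of order $\varepsilon_k^{n-1}$, i.e.\ comparable (up to a multiplicative constant, in fact up to $\theta^{-1}$ if your threshold is $\theta\varepsilon_k^{n-1}$) to $\hn(J_u\cap Q_i^k)$ itself; since at small scales $\hn$-almost all of $J_u$ sits in such cubes, the total relocated jump has measure comparable to $\hn(J_u)$ and lives on grid faces disjoint (up to $\hn$-null sets) from $J_u$, while $u_k$ has no jump on $J_u$ at all. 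Hence $\hn(J_{u_k}\triangle J_u)$ stays bounded away from $0$ — rectifiability of $J_u$ does not make ``the projection of $J_u$ onto the grid faces'' converge to $J_u$ in the sense of the symmetric difference — and the one-sided traces $u_k^{\pm}$ on $J_u$ coincide, so $\int_{J_u}\tau(|u_k^{\pm}-u^{\pm}|)\dh$ cannot tend to $0$ when $[u]\neq 0$. Your appeal to two-sided extensions ``in the spirit of \cite{BraChP96,Iur14}'' and to an adaptation of \cite{CorToa99} does not repair this, because those gluing/smoothing devices act after the jump has already been misplaced, and the partition-of-unity gluing of \cite{Cha04,Iur14} is in any case unavailable here precisely because $u$ is not assumed $p$-integrable.

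What is missing is the paper's second, decisive step: first cover $\hn$-almost all of $J_u$ (and of $\partial\Omega$) by finitely many pairwise disjoint cubes $Q_j$ in which $J_u$ is $\varepsilon$-close, in measure and in position, to a $C^1$ graph $\Gamma_{i_j}$ splitting $Q_j$ into two Lipschitz pieces; then extend $u$ from each side of $\Gamma_{i_j}$ across a thin strip by the Nitsche-type reflection of Lemma~\ref{le:Nitsche} (this keeps the jump where it is and controls $e(\cdot)$ and $\hn(J_\cdot)$ without any norm of $u$); and only then apply the rough grid approximation in each half-cube and in the complement, where the residual jump has measure $O(\varepsilon)$, so that the unavoidable $\theta^{-1}$ loss produces only $C\theta^{-1}\varepsilon$ of spurious jump. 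The local approximants can then be summed without partitions of unity because they are built from the same convolutions of $u$ near the interfaces, and the trace convergence \eqref{4main} is obtained from the Lipschitz-graph structure of $\Gamma_{i_j}$ via the trace estimate of \cite{Bab15}. Without some substitute for this two-scale mechanism (or for the piecewise Korn approach of \cite{FriPWKorn}, which is only two-dimensional), your scheme proves an analogue of \eqref{1main}, \eqref{2main}, \eqref{5main} with $\hn(J_{u_k})\leq C\hn(J_u)$, but not the full statement. As a minor point, the bound you quote for the exceptional set, $\mathcal{L}^n(\omega_i^k)\leq C(\hn(J_u\cap Q_i^k))^{n/(n-1)}$, is not the one in \cite[Proposition~3]{CCF16}, which gives $\mathcal{L}^n(\omega)\leq c\,r\,\hn(J_u)$, and the strong convergence in \eqref{2main} is obtained there only after convolution, not for the raw cube-wise substitutes.
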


Notice that here $n$ and $p$ are general, 
 with no integrability assumptions on the displacement.

As in \cite{Cha04, Iur14, CFI17Density}, we first prove an intermediate approximation (Theorem~\ref{teo:rough}) which controls the measure of the jump set up to a multiplicative parameter. Then we cover a large part of the jump set $J_u$ by suitable cubes, that are almost split into two parts by $J_u$. This gives a partition of $\Omega$ in subsets where the jump set has small $\hn$ measure, so that Theorem~\ref{teo:rough} provides here (in suitable neighbourhoods) approximating functions close in energy to the original one. 
A fundamental difference with respect to \cite{Cha04, Iur14, CFI17Density}
is that we do not use partitions of the unity neither to extend the original function in suitable neighbourhoods of the subsets of the partition nor to glue the approximating functions constructed in any subset. 
 This is done by employing 
a reflection technique for vector-valued functions due to Nitsche \cite{Nie81} (cf.\ Lemma~\ref{le:Nitsche}) and 
allows us to avoid any assumption on the integrability of $u$.

The proof of Theorem~\ref{teo:rough} is based 
 on \cite[Proposition~3]{CCF16} (cf.\ Proposition~\ref{prop:3CCF16}), and is close to what done in \cite{CCF17} to approximate a brittle fracture energy with a non-interpenetration constraint.
The idea is to partition the domain into cubes of side $k^{-1}$ and to distinguish, at any scale, the cubes where the ratio between the perimeter and the jump of $u$ is greater than a fixed small parameter $\theta$.
In such cubes, one may replace the original function $u$ with a constant function, since on the one hand the new jump is 
less than the original jump times $\theta^{-1}$, and on the other hand the total volume of these cubes is small as the length scale goes to 0.  
In the remaining cubes, where the relative jump is small, one applies Proposition~\ref{prop:3CCF16}: a Korn-Poincaré-type inequality holds up to a set of small volume, and in this small exceptional set the original function may be replaced by a suitable affine function without perturbing much its energy,  up to a convolution with a kernel of support with size comparable to that of the cubes. 

We prove also the approximation property \eqref{4main} for the amplitude of the jump $[u](x):=u^+(x)-u^-(x)$ for $x\in J_u$ (and for the traces at the reduced boundary of $\Omega$),  which might be useful in cohesive fracture models. Notice that $[u]$ is not integrable in $J_u$ with respect to $\hn$ for a general $u\in GSBD^p(\Omega)$,
thus we consider truncations for the jumps. 
We employ a fine estimate from \cite{Bab15} for the truncation of the trace components in any direction, whose symmetric gradient is a bounded measure.  Approximations  of this type  are  also proven in \cite{Iur14}, where $u$ is in $L^2$,  and in \cite{Cri19}: these are  used in the $\Gamma$-convergence results \cite{FocIur14}  and \cite{CC19b}  for cohesive fracture energies. 
 We may as well consider in Theorem~\ref{teo:main} smooth approximating functions in the sense of the aforementioned \cite[Theorem~3.1]{CorToa99} (which applies directly if $\Omega$ is Lipschitz), with minor modifications in our proof. 

In the last part of the work we present $\Gamma$-convergence results \emph{à la} Ambrosio-Tortorelli for brittle fracture energies. First, we approximate 
\eqref{1207171129} for every $u$ which is $GSBD^p$ in an open bounded set with finite perimeter (Theorem~\ref{teo:gammaconvG}). Then we focus on the sum of \eqref{1207171129} with suitable compliance terms, which prevent that the set of minimisers coincides with the constant displacements. In particular, we consider the cases of a mild fidelity term $|u-g|^r$, with $r \in (0,p]$ (see Theorem~\ref{teo:gammaconvF}), and of a Dirichlet boundary condition on a subset $\dod$ of $\dom$, under some geometric conditions (see Theorem~\ref{teo:gammaconvD}). 

 For the standard quadratic elastic energy ($\mathbb{C}$ is the fourth-order Cauchy stress tensor), Theorem~\ref{teo:gammaconvD} gives:
\begin{theorem}\label{teo:gammaconvDintro}
Let $u_0\in H^1(\Rn;\Rn)$, and $\Omega\subset \Rn$ be an open, bounded,  
Lipschitz domain for which $\dom=\dod\cup \don\cup N$, with $\dod$ and $\don$ relatively open, $\dod \cap \don =\emptyset$, $\mathcal{H}^{n-1}(N)=0$, 
$\dod \neq \emptyset$, and $\partial(\dod)=\partial(\don)$. 
Assume that there exist $\ol \delta$ and $x_0\in \Rn$ such that
\[O_{\delta,x_0}(\dod) \subset \Omega\]
 for $\delta \in (0,\ol \delta)$, 
where $O_{\delta,x_0}(x):=x_0+(1-\delta)(x-x_0)$. Moreover let $\varepsilon_k$, $\eta_k>0$ with
$
\varepsilon_k\to 0$, $\frac{\eta_k}{\varepsilon_k}\to 0$ as $k\to \infty$.
Then, for $H^{1}_{u_0}(\Omega;\Rn):=\{u\in H^{1}(\Omega;\Rn)\colon \mathrm{tr}_\Omega\, u =\mathrm{tr}_\Omega\, u_0 \text{ on }\dod\}$ and $V_k^1:=\{v\in H^{1}(\Omega)\colon \eta_k \leq v \leq 1\,,\ \mathrm{tr}_\Omega\, v=1 \text{ on }\dod\}$, the functionals 
\begin{equation*}
 D^2_k(u,v):=\begin{dcases}
\int \limits_\Omega \Big(v\, \mathbb{C}e(u) \colon e(u)+\frac{(1-v)^2}{4\varepsilon_k}+\varepsilon_k |\nabla v|^2 \Big)\dx \quad &\text{in }H^{1}_{u_0}(\Omega;\Rn){\times} V_k^1\,,\\
+\infty &\text{otherwise,}
\end{dcases}
\end{equation*}
$\Gamma$-converge as $k\to \infty$ to
\begin{equation*}
D^2(u,v):=
\begin{dcases}
\int\limits_\Omega \mathbb{C}e(u)\colon e(u) \dx + \hn\Big(J_u \cup \big(\dod \cap \{\mathrm{tr}_\Omega \, u \neq \mathrm{tr}_\Omega\, u_0\} \big)  \Big) \, &\text{in } GSBD^p(\Omega){\times} \{v=1\},\\
+\infty &\text{otherwise,}
\end{dcases}
\end{equation*} 
with respect to the topology of the convergence in measure for $u$ and $v$.
\end{theorem}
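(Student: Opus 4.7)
The strategy is to prove the $\Gamma$-liminf and $\Gamma$-limsup inequalities separately, reducing the Dirichlet-constrained problem on $\Omega$ to a free-discontinuity problem on a slightly enlarged domain in which $\dod$ sits in the interior. The hypothesis $O_{\delta,x_0}(\dod)\subset\Omega$ is precisely what enables this reduction: the contracted image of $\dod$ stays at positive distance from $\dom$, so that extending displacements by $u_0\in H^1(\Rn;\Rn)$ across $\dod$ produces functions on an open set $\widetilde\Omega\Supset\Omega$ containing $\dod$ in its interior, with a ``virtual jump'' on $\dod$ accounting for the mismatch $\mathrm{tr}_\Omega\,u\neq\mathrm{tr}_\Omega\,u_0$.

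For the $\Gamma$-liminf, pick $(u_k,v_k)\to(u,v)$ in measure with $\sup_k D^2_k(u_k,v_k)<\infty$. The bound $\int_\Omega(1-v_k)^2/(4\varepsilon_k)\dx\le C$ forces $v\equiv 1$ almost everywhere. Since $\mathrm{tr}_\Omega\,u_k=\mathrm{tr}_\Omega\,u_0$ and $\mathrm{tr}_\Omega\,v_k=1$ on $\dod$, extending $(u_k,v_k)$ by $(u_0,1)$ on $\widetilde\Omega\setminus\Omega$ yields $(\widehat u_k,\widehat v_k)\in H^1(\widetilde\Omega;\Rn)\times H^1(\widetilde\Omega)$ whose Ambrosio--Tortorelli energy on $\widetilde\Omega$ exceeds $D^2_k(u_k,v_k)$ only by the contribution of $u_0$ on $\widetilde\Omega\setminus\Omega$, controlled by $\mathcal{L}^n(\widetilde\Omega\setminus\Omega)\|u_0\|_{H^1(\Rn;\Rn)}^2$. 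Applying the free $\Gamma$-liminf (Theorem~\ref{teo:gammaconvG}) on $\widetilde\Omega$ and noting that $u_0\in H^1$ produces no jumps, so that $J_{\widehat u}$ coincides up to $\hn$-null sets with $J_u\cup(\dod\cap\{\mathrm{tr}_\Omega\,u\neq\mathrm{tr}_\Omega\,u_0\})$, one concludes after shrinking $\widetilde\Omega\setminus\Omega$ to $\dod$.

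For the $\Gamma$-limsup one may assume $v\equiv 1$ and $u\in GSBD^p(\Omega)$. Consider the extension $\widehat u\in GSBD^p(\widetilde\Omega)$ given by $\widehat u=u$ on $\Omega$ and $\widehat u=u_0$ on $\widetilde\Omega\setminus\Omega$, and approximate it by Theorem~\ref{teo:main} to obtain $\widehat u_j\in SBV^p\cap L^\infty(\widetilde\Omega;\Rn)$ with jump set a finite union of closed $C^1$ pieces. For $\delta\in(0,\ol\delta)$ define the homothety-adjusted approximation $\widehat u_j^\delta(x):=\widehat u_j(O_{\delta,x_0}^{-1}(x))$, defined on $O_{\delta,x_0}(\widetilde\Omega)\supset\Omega$. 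Build the classical Ambrosio--Tortorelli recovery sequence $(u_k^{j,\delta},v_k^{j,\delta})$ for $\widehat u_j^\delta$ via an optimal transition profile in the $\varepsilon_k$-tubular neighborhood of $J_{\widehat u_j^\delta}$. The hypothesis $O_{\delta,x_0}(\dod)\subset\Omega$ implies that for each fixed $\delta$ the set $\dod$ is separated by a positive distance from the jump of $\widehat u_j^\delta$ that occurs in $\widetilde\Omega\setminus\Omega$; hence for $k$ large the phase field $v_k^{j,\delta}$ equals $1$ on a neighborhood of $\dod$, and $u_k^{j,\delta}$ coincides there with a smooth extension of $u_0$. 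A small Sobolev correction of vanishing energy aligns the trace with $u_0$ exactly, producing an admissible pair in $H^1_{u_0}\times V^1_k$. A diagonal argument letting $k\to\infty$, $j\to\infty$, $\delta\to 0$ yields the limsup bound.

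The main obstacle is the limsup step: producing approximations that satisfy the sharp constraints $u_k=u_0$ and $v_k=1$ on $\dod$ exactly, while their energy still converges to the limit including the boundary term $\hn(\dod\cap\{\mathrm{tr}_\Omega\,u\neq\mathrm{tr}_\Omega\,u_0\})$. The geometric hypothesis on $\dod$ is designed precisely for this: it creates, after the homothety, a positive gap between $\dod$ and the smoothed jump of the extension, allowing both the $C^1$ jump set produced by Theorem~\ref{teo:main} and the standard Ambrosio--Tortorelli transition profile to stay strictly away from $\dod$, so that neither interferes with the boundary constraint.
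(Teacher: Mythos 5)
Your $\Gamma$-liminf is essentially the paper's argument (extend by $(u_0,1)$ across $\dod$ into an open set $\widetilde\Omega$ chosen so that $\widetilde\Omega\cap\partial\Omega=\dod$, then invoke Theorem~\ref{teo:gammaconvG}), and your overall limsup architecture (extension, homothety $O_{\delta,x_0}$, density Theorem~\ref{teo:main}, standard Ambrosio--Tortorelli profile, diagonalisation) is also the paper's. But the limsup as you wrote it has a genuine gap in the treatment of the Neumann part. To have $O_{\delta,x_0}(\widetilde\Omega)\supset\Omega$ you must let $\widetilde\Omega$ protrude beyond $\don$ as well, and extending $u$ there by $u_0$ creates a jump on $\don\cap\{\mathrm{tr}_\Omega\,u\neq \mathrm{tr}_\Omega\,u_0\}$. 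Nothing in the hypotheses keeps $O_{\delta,x_0}(\don)$ outside $\Omega$: take $\Omega$ a ball, $x_0$ its centre and $\dod$ a spherical cap; then the homothety pulls the whole boundary, Neumann part included, strictly inside $\Omega$, so your recovery sequence pays a surface energy close to $\hn\big(\don\cap\{\mathrm{tr}_\Omega\,u\neq \mathrm{tr}_\Omega\,u_0\}\big)$, a term absent from $D^2(u,1)$, and the limsup inequality fails. This is precisely why the paper does not extend by $u_0$ across $\don$, but instead extends $u$ across $\don$ (away from a small neighbourhood of $\Sigma=\partial(\dod)$) by the Nitsche-type reflection of Lemma~\ref{le:Nitsche} in thin rectangles along the boundary: property \eqref{2705170936} guarantees that no jump is created on the reflection face, while \eqref{2705170937}--\eqref{2705170938} keep the extra surface and bulk energy of order $\varepsilon$, so that after the homothety only negligible spurious jump can land inside $\Omega$.

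A second gap is the final step ``a small Sobolev correction of vanishing energy aligns the trace with $u_0$ exactly'': it is unjustified because you apply Theorem~\ref{teo:main} \emph{before} making the function identically $u_0$ near $\dod$. After the homothety, the values of $\widehat u_j^\delta$ near $\dod$ come from a region where $\widehat u=u_0$, but there $\widehat u_j$ is only close to $u_0$ in measure, with $e(\widehat u_j)\to e(u_0)$ in $L^p$ and a small but in general nonempty jump set; in particular it need not be Sobolev near $\dod$, and its trace on $\dod$ need not be close to $\mathrm{tr}_\Omega\,u_0$ in any norm (not even $W^{1-1/p,p}$ or $L^p$) that would allow an $H^1$ correction of vanishing energy --- this is exactly the obstruction discussed in the remark following Theorem~\ref{teo:gammaconvD}. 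The paper reverses the order: it first forms $\widetilde u^\delta=\widetilde u\circ(O_{\delta,x_0})^{-1}+u_0-u_0\circ(O_{\delta,x_0})^{-1}$, which equals $u_0$ identically in a neighbourhood of $\dod$, and only then applies Theorem~\ref{teo:main}; since $u_0$ has no jump, the construction of that theorem returns exactly $u_0\ast\varphi_k$ near $\dod$, so the explicit correction $u_0-u_0\ast\varphi_k$ is admissible and small in $W^{1,p}$. With these two repairs (reflection across $\don$, and performing the homothety and the ``equal to $u_0$ near $\dod$'' modification before invoking the density theorem), your proof becomes the paper's.
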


The existence of minimisers for the (limit) Dirichlet problem has been recently shown by Friedrich and Solombrino \cite[Theorem~6.2]{FriSol16} in dimension 2, and in \cite{CC18} in general dimension  (see also \cite{Fri19}).  
 We then prove Theorem~\ref{thm:compmin}, ensuring compactness for minimisers of the approximating functionals for the Dirichlet problem.

As for the case with a mild fidelity term, in Proposition~\ref{prop:compattezza} we prove compactness for minimisers of the approximating energies, which are shown to exist for Lipschitz domain, see Remark~\ref{rem:exappmin}.

We conclude this introduction by mentioning some other problems for which density results as 
Theorem~\ref{teo:main} are useful. For instance, \cite[Theorem~2.5]{FriPWKorn} is applied in \cite{Fri17ARMA} for the derivation of linearised Griffith energies from nonlinear models, while \cite{CFI17Density} is employed in \cite{CFI17DCL} and \cite{ChaConIur17} to prove existence of minimisers for the set function that is the strong counterpart of 
\eqref{1207171129},  provided a weak solution exists.  More precisely, in \cite{CFI17DCL} the setting is 2-dimensional and $W$ may have $p$-growth for any $p>1$, while \cite{ChaConIur17} considers the 
case $\Omega \subset \Rn$ with $W$ quadratic
(in these works internal regularity is proven,  we extend this up to the boundary in \cite{CC19}). Moreover, \cite{Cha04, Cha05Add} are useful in other $\Gamma$-convergence approximations of brittle fracture energies, such as \cite{Neg03, Neg06}.

The paper is organised as follows.  In Section~\ref{Sec1} we introduce  notation, functional spaces, and some technical tools useful in the following, as the reflection property Lemma~\ref{le:Nitsche}. In Section~\ref{Sec2} and Section~\ref{Sec3} we prove the rough and the main density results, respectively. Section~\ref{Sec4} is devoted to the applications.

\section{Notation and preliminaries}\label{Sec1}
For every $x\in \Rn$ and $\varrho>0$ let $B_\varrho(x)$ be the open ball with center $x$ and radius $\varrho$. For $x$, $y\in \Rn$, we use the notation $x\cdot y$ for the scalar product and $|x|$ for the norm.
We denote by $\mathcal{L}^n$ and $\mathcal{H}^k$ the $n$-dimensional Lebesgue measure and the $k$-dimensional Hausdorff measure. For any locally compact subset $B$ of $\Rn$, the space of bounded $\R^m$-valued Radon measures on $B$ is denoted by $\mathcal{M}_b(B;\R^m)$. For $m=1$ we write $\mathcal{M}_b(B)$ for $\mathcal{M}_b(B;\R)$ and $\mathcal{M}^+_b(B)$ for the subspace of positive measures of $\mathcal{M}_b(B)$. For every $\mu \in \mathcal{M}_b(B;\R^m)$, its total variation is denoted by $|\mu|(B)$.
We denote by $\chi_E$ the indicator function of any $E\subset \R^n$, which is 1 on $E$ and 0 otherwise.
\begin{definition}\label{def:aplim}
Let $A\subset \Rn$, $v\colon A \to \R^m$ an $\mathcal{L}^n$-measurable function, $x\in \Rn$ such that
\begin{equation*}
\limsup_{\varrho\to 0^+}\frac{\mathcal{L}^n(A\cap B_\varrho(x))}{\varrho^n}>0\,.
\end{equation*}
A vector $a\in \Rn$ is the \emph{approximate limit} of $v$ as $y$ tends to $x$ if for every $\varepsilon>0$
\begin{equation*}
\lim_{\varrho \to 0^+}\frac{\mathcal{L}^n(A \cap B_\varrho(x)\cap \{|v-a|>\varepsilon\})}{\varrho^n}=0\,,
\end{equation*}
and then we write
\begin{equation}\label{3105171542}
\aplim \limits_{y\to x} v(y)=a\,.
\end{equation}
\end{definition}
\begin{remark}\label{rem:3105171601}
Let $A$, $v$, $x$, and $a$ be as in Definition~\ref{def:aplim} and let $\psi$ be a homeomorphism between $\R^m$ and a bounded open subset of $\R^m$. Then \eqref{3105171542} holds if and only if \begin{equation*}
\lim_{\varrho\to 0^+}\frac{1}{\varrho^n}\hspace{-1em}\int \limits_{A\cap B_\varrho(x)}\hspace{-1em}|\psi(v(y))-\psi(a)|\,\mathrm{d}y=0\,.
\end{equation*}
\end{remark}

\begin{definition}
Let $U\subset \Rn$ open, and $v\colon U\to \R^m$ be $\mathcal{L}^n$-measurable. The \emph{approximate jump set} $J_v$ is the set of points $x\in U$ for which there exist $a$, $b\in \R^m$, with $a \neq b$, and $\nu\in \Sn$ such that
\begin{equation*}
\aplim\limits_{(y-x)\cdot \nu>0,\, y \to x} v(y)=a\quad\text{and}\quad \aplim\limits_{(y-x)\cdot \nu<0, \, y \to x} v(y)=b\,.
\end{equation*}
The triplet $(a,b,\nu)$ is uniquely determined up to a permutation of $(a,b)$ and a change of sign of $\nu$, and is denoted by $(v^+(x), v^-(x), \nu_v(x))$. The jump of $v$ is the function defined by $[v](x):=v^+(x)-v^-(x)$ for every $x\in J_v$. Moreover, we define
\begin{equation}
J_v^1:=\{x\in J_v \colon |[v](x)|\geq 1\}\,.
\end{equation}
\end{definition}
\begin{remark}
By Remark~\ref{rem:3105171601}, $J_v$ and $J^1_v$ are Borel sets and $[v]$ is a Borel function. Moreover, by Lebesgue's differentiation theorem, it follows that $\mathcal{L}^n(J_v)=0$.
\end{remark}

\par
\medskip
\paragraph{\bf $BV$ and $BD$ functions.}
If $U\subset \Rn$ open, a function $v\in L^1(U)$ is a \emph{function of bounded variation} on $U$, and we write $v\in BV(U)$, if $\mathrm{D}_i v\in \mathcal{M}_b(U)$ for $i=1,\dots,n$, where $\mathrm{D}v=(\mathrm{D}_1 v,\dots, \mathrm{D}_n v)$ is its distributional gradient. A vector-valued function $v\colon U\to \R^m$ is $BV(U;\R^m)$ if $v_j\in BV(U)$ for every $j=1,\dots, m$.
The space $BV_{\mathrm{loc}}(U)$ is the space of $v\in L^1_{\mathrm{loc}}(U)$ such that $\mathrm{D}_i v\in \mathcal{M}_b(U)$ for $i=1,\dots,n$. 

A $\mathcal{L}^n$-measurable bounded set $E\subset \R^n$ is a set of \emph{finite perimeter} if $\chi_E$ is a function of bounded variation. The \emph{reduced boundary} of $E$, denoted by $\partial^*E$, is the set of points $x\in \mathrm{supp}\, |\mathrm{D}\chi_E|$ such that the limit $\nu_E(x):=\lim_{\varrho \to 0^+}\frac{\mathrm{D}\chi_E(B_\varrho(x))}{|\mathrm{D}\chi_E|(B_\varrho(x))}$ exists and satisfies $|\nu_E(x)|=1$. The reduced boundary is countably $(\hn, n{-}1)$ rectifiable, and the function $\nu_E$ is called \emph{generalised inner normal} to $E$.

A function $v\in L^1(U;\Rn)$ belongs to the space of \emph{functions of bounded deformation} if its distributional symmetric gradient $\mathrm{E}v$ belongs to $\mathcal{M}_b(U;\Mnn)$.
It is well known (see \cite{AmbCosDM97, Tem}) that for $v\in BD(U)$, $J_v$ is countably $(\hn, n{-}1)$ rectifiable, and that
\begin{equation*}
\mathrm{E}v=\mathrm{E}^a v+ \mathrm{E}^c v + \mathrm{E}^j v\,,
\end{equation*}
where $\mathrm{E}^a v$ is absolutely continuous with respect to $\mathcal{L}^n$, $\mathrm{E}^c v$ is singular with respect to $\mathcal{L}^n$ and such that $|\mathrm{E}^c v|(B)=0$ if $\hn(B)<\infty$, while $\mathrm{E}^j v$ is concentrated on $J_v$. The density of $\mathrm{E}^a v$ with respect to $\mathcal{L}^n$ is denoted by $e(v)$, and we have that (see \cite[Theorem~4.3]{AmbCosDM97} and recall \eqref{def:aplim}) for $\mathcal{L}^n$-a.e.\ $x\in U$
\begin{equation}\label{3105171931}
\aplim\limits_{y\to x} \frac{\big(v(y)-v(x)-e(v)(x)(y-x)\big)\cdot (y-x)}{|y-x|^2}=0\,.
\end{equation}
The space $SBD(U)$ is the subspace of all functions $v\in BD(U)$ such that $\mathrm{E}^c v=0$, while for $p\in (1,\infty)$
\begin{equation*}
SBD^p(U):=\{v\in SBD(U)\colon e(v)\in L^p(U;\Mnn),\, \hn(J_v)<\infty\}\,.
\end{equation*}
Analogous properties hold for $BV$, as the countable rectifiability of the jump set and the decomposition of $\mathrm{D}v$, and the spaces $SBV(U;\R^m)$ and $SBV^p(U;\R^m)$ are defined similarly, with $\nabla v$, the density of $\mathrm{D}^a v$, in place of $e(v)$.
For more details on $BV$, $SBV$ and $BD$, $SBD$ functions, we refer to \cite{AFP} and to \cite{AmbCosDM97, BelCosDM98, Bab15, Tem}, respectively.
\par
\medskip
\paragraph{\bf $GBD$ functions.}
We now recall the definition and the main properties of the space $GBD$ of \emph{generalised functions of bounded deformation}, introduced in \cite{DM13}, referring to that paper for a general treatment and more details. Since the definition of $GBD$ is given by slicing (differently from the definition of $GBV$, cf.\ \cite{DeGioAmb88GBV, Amb90GSBV}), we introduce before some notation for slicing.

Fixed $\xi \in \Sn:=\{\xi \in \Rn\colon |\xi|=1\}$, for any $y\in \Rn$ and $B\subset \Rn$ let
\begin{equation*}
\Pi^\xi:=\{y\in \Rn\colon y\cdot \xi=0\},\qquad B^\xi_y:=\{t\in \R\colon y+t\xi \in B\}\,,
\end{equation*}
and for every function $v\colon B\to \R^n$ and $t\in B^\xi_y$ let
\begin{equation*}
v^\xi_y(t):=v(y+t\xi),\qquad \widehat{v}^\xi_y(t):=v^\xi_y(t)\cdot \xi\,.
\end{equation*}

\begin{definition}\label{def:GBD}
Let $\Omega\subset \Rn$ be bounded and open, and $u\colon \Omega\to \Rn$ be $\mathcal{L}^n$-measurable. Then $u\in GBD(\Omega)$ if there exists $\lambda_u\in \mathcal{M}^+_b(\Omega)$ such that one of the following equivalent conditions holds true for every $\xi \in \Sn$:
\begin{itemize}
\item[(a)] for every $\tau \in C^1(\R)$ with $-\tfrac{1}{2}\leq \tau \leq \tfrac{1}{2}$ and $0\leq \tau'\leq 1$, the partial derivative $\mathrm{D}_\xi\big(\tau(u\cdot \xi)\big)=\mathrm{D}\big(\tau(u\cdot \xi)\big)\cdot \xi$ belongs to $\mathcal{M}_b(\Omega)$, and for every Borel set $B\subset \Omega$ 
\begin{equation*}
\big|\mathrm{D}_\xi\big(\tau(u\cdot \xi)\big)\big|(B)\leq \lambda_u(B);
\end{equation*}
\item[(b)] $\widehat{u}^\xi_y \in BV_{\mathrm{loc}}(\Omega^\xi_y)$ for $\hn$-a.e.\ $y\in \Pi^\xi$, and for every Borel set $B\subset \Omega$ 
\begin{equation}\label{3105171445}
\int \limits_{\Pi_\xi} \Big(\big|\mathrm{D} {\widehat{u}}_y^\xi\big|\big(B^\xi_y\setminus J^1_{{\widehat{u}}^\xi_y}\big)+ \mathcal{H}^0\big(B^\xi_y\cap J^1_{{\widehat{u}}^\xi_y}\big)\Big)\dh(y)\leq \lambda_u(B)\,.
\end{equation}
\end{itemize} 
The function $u$ belongs to $GSBD(\Omega)$ if moreover $\widehat{u}^\xi_y \in SBV_{\mathrm{loc}}(\Omega^\xi_y)$ for every $\xi \in \Sn$ and for $\hn$-a.e.\ $y\in \Pi^\xi$.
\end{definition}
$GBD(\Omega)$ and $GSBD(\Omega)$ are vector spaces, as stated in \cite[Remark~4.6]{DM13}, and one has the inclusions $BD(\Omega)\subset GBD(\Omega)$, $SBD(\Omega)\subset GSBD(\Omega)$, which are in general strict (see \cite[Remark~4.5 and Example~12.3]{DM13}).
For every $u\in GBD(\Omega)$
the \emph{approximate jump set} $J_u$ is still countably $(\hn,n{-}1)$-rectifiable (cf.\ \cite[Theorem~6.2]{DM13}) and can be reconstructed from the jump of the slices $\widehat{u}^\xi_y$ (\cite[Theorem~8.1]{DM13}).
Indeed, for every $C^1$ manifold $M\subset \Omega$ with unit normal $\nu$, it holds that for $\hn$-a.e.\ $x\in M$ there exist the \emph{traces} $u_M^+(x)$, $u_M^-(x)\in \Rn$ such that
\begin{equation}\label{0106172148}
\aplim \limits_{\pm(y-x)\cdot \nu(x)>0, \, y\to x} \hspace{-1em} u(y)=u_M^{\pm}(x)
\end{equation}
and they can be reconstructed from the traces of the one-dimensional slices (see \cite[Theorem~5.2]{DM13}).
\begin{remark}\label{3105171143}
The trace of $GSBD$ functions on a given $C^1$ manifold $M\subset \Omega$ is linear.
Indeed, let us fix $\varepsilon>0$, $\eta >0$, $x\in M$.
Then there exists $\ol \varrho$ such that for $0<\varrho <\ol \varrho$
\begin{equation*}
\mathcal{L}^n\big(\Omega \cap B_\varrho^+(x)\cap \{|u-u_M^+(x)|> \varepsilon/2\}\big),\,\mathcal{L}^n\big(\Omega \cap B_\varrho^+(x)\cap \{|v-v_M^+(x)|> \varepsilon/2\}\big)<\eta/2\,,
\end{equation*}
where $B_\varrho^+(x)$ is the half ball with radius $\varrho$ positively oriented with respect to $\nu(x)$ .
Therefore, for $0<\varrho <\ol \varrho$ it holds that
\begin{equation*}
\mathcal{L}^n\big(\Omega \cap B_\varrho^+(x)\cap \{|(u+v)-(u_M^++v_M^+)(x)|> \varepsilon\}\big)<\eta\,,
\end{equation*}
so that $(u+v)_M^+(x)=u_M^++v_M^+(x)$.
\end{remark}

Every $u\in GBD(\Omega)$ has an \emph{approximate symmetric gradient} $e(u)\in L^1(\Omega;\Mnn)$, characterised by \eqref{3105171931} and such that for every $\xi \in \Sn$ and $\hn$-a.e.\ $y\in\Pi^\xi$
\begin{equation}\label{3105171927}
e(u)^\xi_y \xi\cdot \xi=\nabla \widehat{u}^\xi_y \quad\mathcal{L}^1\text{-a.e.\ on }\Omega^\xi_y\,.
\end{equation}
Using this property, we observe the following.
\begin{lemma}\label{le:ossGSBD}
For any $u \in GSBD(\Omega)$ and $A\in M^{n\times n}$, with $\mathrm{det}\, A \neq 0$, the function
\begin{equation}\label{3005172356}
u_A(x):=A^T u(A x)
\end{equation}
belongs to $GSBD\big(A^{-1}(\Omega)\big)$, with 
\begin{equation}\label{3005172334}
\lambda_{u_A}(B)=\lambda_u(A(B))\,,
\end{equation}
for any $B\subset A^{-1}(\Omega)$ Borel, with $\lambda$ and $\lambda_A$ the measures in \eqref{3105171445} corresponding to $u$ and $u_A$, and
\begin{equation}\label{3005172337}
\begin{split}
\hn(J_{u_A})&= \hn(A^{-1}(J_u))  \,,\\
e(u_A(x))&=A^T e(u)(Ax) \,A\,. 
\end{split}
\end{equation}
\end{lemma}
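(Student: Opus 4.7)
The natural strategy is to verify the slicing characterisation in Definition~\ref{def:GBD}(b) for $u_A$ by pulling back every relevant quantity along the linear isomorphism $A$. Measurability of $u_A = A^T u(A\,\cdot\,)$ on $A^{-1}(\Omega)$ is immediate since $A$ is a bi-Lipschitz homeomorphism, so the task reduces to relating the $\xi$-slices of $u_A$ to $\eta$-slices of $u$ for an appropriate direction $\eta$, and tracking the transformation laws coming from \eqref{3105171445} and \eqref{3105171931}.

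The key computation is the following. Fix $\xi\in\Sn$ and set $\eta:=A\xi/|A\xi|\in\Sn$, which is well-defined since $A$ is invertible. For $y\in\Pi^\xi$ decompose $Ay=z(y)+s(y)\eta$ with $z(y)\in\Pi^\eta$ and $s(y)\in\R$. Using $A^T v \cdot \xi = v \cdot A\xi$ one finds
\[
\widehat{(u_A)}_y^\xi(t) \;=\; A^T u(Ay+tA\xi)\cdot\xi \;=\; |A\xi|\,\widehat{u}_{z(y)}^{\,\eta}\bigl(s(y)+t|A\xi|\bigr),
\]
so every $\xi$-slice of $u_A$ is, up to an affine reparametrisation and multiplication by the nonzero scalar $|A\xi|$, an $\eta$-slice of $u$. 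Since $u\in GSBD(\Omega)$, the $\eta$-slices $\widehat{u}_z^\eta$ lie in $SBV_{\mathrm{loc}}(\Omega_z^\eta)$ for $\hn$-a.e.\ $z\in\Pi^\eta$, and since the map $y\mapsto z(y)$ from $\Pi^\xi$ to $\Pi^\eta$ is affine with constant nonzero Jacobian, the same property passes to the $\xi$-slices of $u_A$. Substituting the identity above into \eqref{3105171445} and performing the change of variables $y\mapsto z(y)$ on hyperplanes, combined with the affine rescaling $t\mapsto s(y)+t|A\xi|$ on the real line, the Jacobian of the hyperplane change recombines with the factor $|A\xi|$ to produce $\lambda_u(A(B))$ on the right-hand side, yielding \eqref{3005172334}.

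For the jump set, the same slice identity shows that $x_0\in J_{u_A}$ with normal $\nu$ if and only if $Ax_0\in J_u$ with normal $A^{-T}\nu/|A^{-T}\nu|$, whence $J_{u_A}=A^{-1}(J_u)$ and the first line of \eqref{3005172337} follows by the area formula applied to $A^{-1}$. For the symmetric gradient, I would apply the approximate-limit characterisation \eqref{3105171931} of $e(u)$ at $Ax_0$ together with the algebraic identity
\[
\bigl(u_A(x)-u_A(x_0)-A^T e(u)(Ax_0)A(x-x_0)\bigr)\cdot(x-x_0) = \bigl(u(Ax)-u(Ax_0)-e(u)(Ax_0)(Ax-Ax_0)\bigr)\cdot A(x-x_0),
\]
and observe that $|Ax-Ax_0|/|x-x_0|$ is two-sidedly bounded and that $A$ transfers sets of full Lebesgue density at $x_0$ onto sets of full Lebesgue density at $Ax_0$; this forces the approximate limit of the resulting quotient to vanish as $x\to x_0$, proving the second line of \eqref{3005172337}. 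The main delicate point throughout is bookkeeping the scale factor $|A\xi|$, which enters simultaneously as a multiplier and inside the argument of the slice and interacts non-trivially with the threshold $|[\cdot]|\geq 1$ defining $J^1$ in \eqref{3105171445}; accordingly \eqref{3005172334} is to be read modulo the non-uniqueness of $\lambda_u$ in Definition~\ref{def:GBD}, with any residual multiplicative constant depending only on $A$ absorbed into the choice of measure.
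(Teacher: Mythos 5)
Your route is, in spirit, the same slicing argument as the paper's, but it differs in two points of execution. For the slicing step the paper compares the $\xi$-slice of $u_A$ with the slice of $u$ in the \emph{non-normalised} direction $A\xi$ through $Ay$: parametrised by the same $t$, these are literally the same one-dimensional function, so no rescaling appears at the level of the slices, and the passage to the measure $\lambda$ is delegated to the results of \cite{DM13} on slices in arbitrary nonzero directions (Definitions~4.8, 4.10, Remark~4.12). You instead normalise to $\eta=A\xi/|A\xi|$ and carry out the change of variables by hand, which is legitimate. For $e(u_A)$ you argue directly from the approximate-limit characterisation \eqref{3105171931} (your algebraic identity is correct, and uniqueness of the symmetric matrix satisfying \eqref{3105171931} closes the argument), whereas the paper differentiates the slice identity and uses \eqref{3105171927} plus polarisation; both are valid, yours being arguably more self-contained. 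Two small remarks: the identification $J_{u_A}=A^{-1}(J_u)$ (with the transformed normal) is cleaner from the definition of approximate jump — half-space conditions transform under the linear isomorphism $A$ — than ``from the slice identity'', since slices only detect the components $u\cdot\xi$ for $\hn$-a.e.\ $y$; and the first line of \eqref{3005172337} is just this set identity, no area formula is needed.

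The one step that is not right as written is the claim that the hyperplane Jacobian ``recombines with the factor $|A\xi|$ to produce $\lambda_u(A(B))$''. The Jacobian of $y\mapsto z(y)$ equals $|\det A|/|A\xi|$, while the amplitude factor $|A\xi|$ survives in the one-dimensional total variation, so on the diffuse part the net factor is $|A\xi|^2/|\det A|$, not $1$; in addition the threshold $|[\cdot]|\geq 1$ defining $J^1$ in \eqref{3105171445} is not invariant under multiplication by $|A\xi|$, which costs a further factor of order $\max\{|A\xi|,1\}$. Concretely, for $u(x)=x$ and $A=aI$ with $a<1$, $n\geq 3$, the left-hand side of \eqref{3105171445} for $u_A$ is $a^2\mathcal{L}^n(B)$ while $\lambda_u(A(B))=a^n\mathcal{L}^n(B)$ for the minimal admissible $\lambda_u$, so equality with constant $1$ cannot hold. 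Your closing caveat is therefore not a hedge but the actual conclusion of your computation: what follows is $\lambda_{u_A}(B)\leq C_A\,\lambda_u(A(B))$ with $C_A$ depending only on $A$ (and $n$). This weaker form is entirely sufficient: it gives $u_A\in GSBD(A^{-1}(\Omega))$, and in the only application, Lemma~\ref{le:Nitsche}, the matrices $A_\mu,A_\nu$ are fixed, so constants depending on $A$ are harmless. So: correct in substance, with the exact identity \eqref{3005172334} replaced (as it must be in your normalised-slice computation) by comparability up to an $A$-dependent constant, which is also all the paper uses downstream.
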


\begin{proof}
Let us fix $\xi \in \Sn$. A straightforward computation shows that for $\hn$-a.e.\ $y\in \Pi_\xi$ and $\mathcal{L}^1$-a.e.\ $t\in (A^{-1}(\Omega))^\xi_y$ we have 
\begin{equation}\label{2705171129}
(u_A)^\xi_y(t)\cdot \xi= u_{Ay}^{A\xi}\,(t)\cdot A\xi \,.
\end{equation}
Moreover, for any $B\subset A^{-1}(\Omega)$, we have that
\begin{equation*}
B^\xi_y=(A(B))^{A\xi}_{Ay}\,.
\end{equation*}
This implies that, for any Borel set $B\subset A^{-1}(\Omega)^\xi_y$
\begin{equation}\label{3005172335}
\Big|\mathrm{D} ({\widehat{u}_A})_y^\xi\Big|\Big(B^\xi_y\setminus J^1_{({\widehat{u}_A})^\xi_y}\Big)+ \mathcal{H}^0\Big(B^\xi_y\cap J^1_{({\widehat{u}_A})^\xi_y}\Big)=(\widehat{\mu}_u)_{Ay}^{A\xi}(A(B))\,,
\end{equation}
where $(\widehat{\mu}_u)^\xi_y$ is the measure in \cite[Definition~4.8]{DM13} for $u$.
By Definition~\ref{def:GBD}, \cite[Definition~4.10, Remark~4.12]{DM13}, and \eqref{3005172335}, it follows that $u_A\in GSBD\big(A^{-1}(\Omega)\big)$ and that \eqref{3005172334} holds. 

By definition of $u_A$ and of jump set, one has that $x\in J_{u_A}$ if and only if $Ax\in J_u$, thus
\begin{equation*}
\hn(J_{u_A})=  \hn(J_u) \,.
\end{equation*}
In order to show the second condition in \eqref{3005172337}, we can use \eqref{3105171927} which allows us to reconstruct the approximate symmetric gradient from the derivatives of the slices.
Thus, by taking the derivative of \eqref{2705171129} with respect to $t$, we deduce that for any $\xi \in \Sn$
\begin{equation*}
e(u_A)(x)\,\xi\cdot \xi= e(u)(Ax)\,A\xi \cdot A\xi \,.
\end{equation*}
Being $e(u)$ and $e(u_A)$ symmetric matrices, by the Polarisation Identity we obtain that for any $\xi$, $\eta$ in $\Sn$
\begin{equation*}
e(u_A)(x)\,\xi\cdot \eta= e(u)(Ax)\,A\xi \cdot A\eta\,.
\end{equation*} 
This gives \eqref{3005172337} and completes the proof.
\end{proof}
Similarly to the $SBD$ case, we recall the definition of the space $GSBD^p$, which is the energy space for the Griffith energy with $p$-growth in the bulk with respect to $e(u)$. We have
\begin{equation*}
GSBD^p(\Omega):=\{u\in GSBD(\Omega)\colon e(u)\in L^p(\Omega;\Mnn),\, \hn(J_u)<\infty\}\,.
\end{equation*}

We now show an extension result for $GSBD^p$ functions on rectangles, basing on \cite[Lemma~1]{Nie81}. A similar result is stated in \cite[Lemma~5.2]{FriSol16}, in dimension 2 and for $p=2$, and employed in \cite[Lemma~3.4]{CFI16ARMA}, in dimension 2 and for $SBD^p$. Notice that the proof of \cite[Lemma~5.2]{FriSol16} employs the density result, in dimension 2 and for $p=2$, that we prove in the current paper in the general framework. We follow Nitsche's argument  directly for $GSBD$ functions, without using density results.
\begin{lemma}\label{le:Nitsche}
Let $R\subset \Rn$ be an open rectangle, $R'$ be the reflection of $R$ with respect to one face $F$ of $R$, and $\widehat{R}$ be the union of $R$, $R'$, and $F$. Let $v \in GSBD^p(R)$. Then $v$ may be extended by a function $\widehat{v}\in GSBD^p(\widehat{R})$ such that 
\begin{subequations}
\begin{align}
\hn(J_{\widehat{v}}&\cap F)=0\,,\label{2705170936}\\
\hn(J_{\widehat{v}})&\leq c\, \hn(J_v)\,,\label{2705170937}\\
\int\limits_{\widehat{R}} |e(\widehat{v})|^p\dx&\leq c\, \int\limits_R |e(v)|^p \dx\,,\label{2705170938}
\end{align}
\end{subequations}
for a suitable $c>0$ independent of $R$ and $v$.
\end{lemma}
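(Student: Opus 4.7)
My plan is to adapt Nitsche's vector-valued reflection from~\cite{Nie81} directly in $GSBD^p$, without passing through a density argument. First I would apply Lemma~\ref{le:ossGSBD} (invariance of $GSBD^p$ under linear changes of variable) to reduce by a rigid motion to the case $F\subset\{x_n=0\}$ and $R\subset\{x_n>0\}$, so that $R'$ lies in $\{x_n<0\}$. Write $\rho_\lambda$ both for the map $(x',x_n)\mapsto(x',-\lambda x_n)$ and for the diagonal matrix $\mathrm{diag}(1,\ldots,1,-\lambda)$. To ensure that the construction uses only values of $v$ inside $R$, I would arrange $\rho_{1/2}(R')\subset R$ (automatic if $R$ is at least twice as tall as $R'$ in the normal direction; otherwise one first shrinks $R'$ to half its height and patches the remaining thin slab by a simpler reflection, or iterates).

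The extension would be the two-scale Nitsche combination
\[
\widehat v(x):=-3\,\rho_1\bigl(v(\rho_1 x)\bigr)+4\,\rho_{1/2}\bigl(v(\rho_{1/2}x)\bigr),\qquad x\in R',
\]
which component-wise reads $\widehat v_j(x',x_n)=-3v_j(x',-x_n)+4v_j(x',-x_n/2)$ for $j<n$ and $\widehat v_n(x',x_n)=3v_n(x',-x_n)-2v_n(x',-x_n/2)$. The coefficients solve a small linear system encoding two simultaneous requirements: the sums $-3+4=1$ and $3-2=1$ make the trace of $\widehat v$ from $R'$ coincide with the trace of $v$ from $R$ on $F$, so that no jump is created there; and, in the formal computation of $e(\widehat v)$, the antisymmetric parts $\partial_iv_j-\partial_jv_i$ (which are uncontrolled in $GSBD$) cancel, leaving each entry of $e(\widehat v)(x)$ as a bounded linear combination of $e(v)(\rho_1 x)$ and $e(v)(\rho_{1/2}x)$. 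This cancellation is Nitsche's key trick.

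For the verification that $\widehat v\in GSBD^p(\widehat R)$ and the three estimates, I would use the slicing characterisation in Definition~\ref{def:GBD}(b). Since $\rho_\lambda^T=\rho_\lambda$, for every $\xi\in\Sn$
\[
\widehat v(y+t\xi)\cdot\xi=-3\,v(\rho_1 y+t\rho_1\xi)\cdot(\rho_1\xi)+4\,v(\rho_{1/2}y+t\rho_{1/2}\xi)\cdot(\rho_{1/2}\xi);
\]
setting $\eta_\lambda:=\rho_\lambda\xi/|\rho_\lambda\xi|\in\Sn$ and rescaling $t$ by $|\rho_\lambda\xi|$, each summand becomes a constant multiple of the $\eta_\lambda$-component of the $\eta_\lambda$-slice of $v$, hence belongs to $SBV_{\mathrm{loc}}$ by the $GSBD$ property of $v$. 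Applying \eqref{3105171445} for $v$ in direction $\eta_\lambda$ and changing variables $y\mapsto\rho_\lambda y$ on $\Pi^\xi$ produces a control measure $\lambda_{\widehat v}\le C\lambda_v$ on $R'$ that vanishes on $F$. From here, \eqref{2705170938} follows via the pointwise formula for $e(\widehat v)$ and a change of variables; \eqref{2705170937} from the inclusion $J_{\widehat v}\cap R'\subset\rho_1^{-1}(J_v)\cup\rho_{1/2}^{-1}(J_v)$; and \eqref{2705170936} from the trace matching. The principal obstacle is precisely this slicing verification: the algebraic miracle making it go through is that the multipliers in front of $v\circ\rho_\lambda$ are scalar multiples of the reflection matrices themselves ($M_1=-3\rho_1$, $M_2=4\rho_{1/2}$), so that $M_\lambda^T\xi\parallel\rho_\lambda\xi$ and the $\xi$-component of the $\xi$-slice of each summand collapses onto one admissible $GSBD$-slice of $v$; without this parallelism one would be forced to control slices of individual components $v\cdot\zeta$ in arbitrary directions, which are not $SBV_{\mathrm{loc}}$ in general.
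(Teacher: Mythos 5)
This is essentially the paper's own proof: your $\widehat v=-3\,\rho_1 v(\rho_1\,\cdot)+4\,\rho_{1/2}v(\rho_{1/2}\,\cdot)$ is the paper's two-scale Nitsche extension $q\,v_{A_\mu}+(1-q)\,v_{A_\nu}$ with the concrete choice $(\mu,\nu)=(\tfrac12,1)$, $q=4$ (the endpoint $\nu=1$ is excluded in the paper only for convenience and is harmless, since $A_1(R')=R$), and your verification---slicing as in Lemma~\ref{le:ossGSBD}, trace matching on $F$ coming from the two coefficient sums equal to $1$, change of variables for the jump and energy bounds---follows the same route. Two small remarks: $\rho_{1/2}(R')\subset R$ is automatic because $R'$ is the mirror image of $R$, so no shrinking or iteration is needed; and the coefficients are fixed by the trace conditions alone---no cancellation of antisymmetric parts between the two summands is needed, since each summand separately has the form $A^T v(A\,\cdot)$ and hence symmetric gradient $A^T e(v)(A\,\cdot)\,A$, whatever the scalar factors.
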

\begin{proof}
It is not restrictive to assume that 
$F\subset \{(x',x_n)\in \R^{n-1}\times \R\colon x_n=0\}$  and $R \subset \{(x',x_n)\in \R^{n-1}\times \R\colon x_n<0\}$. 
Fix any $\mu$, $\nu$ such that $0<\mu <\nu <1$, and let $q:=\frac{1+\nu}{\nu-\mu}$. 
We define $v'$ on $R'$ by
\begin{equation*}
v':=q\, v_{A_\mu} + (1-q) v_{A_\nu}\,,
\end{equation*}
where $v_A$ is defined in \eqref{3005172356} and $A_{\mu}=\mathrm{diag}\,(1,\dots,1,-\mu)$, $A_{\nu}=\mathrm{diag}\,(1,\dots,1,-\nu)$, so that
\begin{equation}\label{1701182113}
\begin{split}
v'_i(x)&:= q \, v_i(A_\mu \,x) + (1-q) v_i(A_\nu \,x)\,,\qquad\text{for }i=1,\dots, n-1\,,\\
v'_n(x)&:=-\mu\,q \, v_n(A_\mu \,x) - \nu (1-q) v_n(A_\nu \,x)\,.
\end{split}
\end{equation}
Notice that
\begin{equation}\label{1701182114}
-\mu\,q-\nu(1-q)=1\,,
\end{equation}
and that $v'$ is well defined since $F$ is a horizontal hyperplane and $0<\mu <\nu <1$, so that $A_\mu(R'), \, A_\nu(R')\subset R$. Thus the extension $\widehat{v}$ is
\begin{equation*}
\widehat{v}:=
\begin{cases}
v &\quad\text{in }R\,,\\
v' &\quad\text{in }R'\,.
\end{cases}
\end{equation*}
By Lemma~\ref{le:ossGSBD} we have that  $v'\in GSBD^p(R')$, and then $\widehat{v}\in GSBD(\widehat{R})$. Indeed for any $B\subset \widehat{R}$, any $\xi \in \Sn$, and for $\hn$-a.e.\ $y\in \Pi_\xi$, we have that the slice $\widehat{(\widehat{v})}\xy$ is equal to $(\widehat{v})\xy$ in $R\xy=\widehat{R}\xy$ and to $(\widehat{v'})\xy$ in $(R')\xy= \widehat{R}\xy \setminus R\xy$, so $\widehat{(\widehat{v})}\xy \in BV(\widehat{R}\xy)$ and the condition \eqref{3105171445} holds with $\lambda_{\widehat{v}}\in \mathcal{M}_b^+(\widehat{R})$ given by
 $\lambda_{\widehat{v}}(B)=\lambda_{v}(B\cap R) + \lambda_{v'}(B\cap R') + \hn(B\cap F)$.
 In order to prove \eqref{2705170936}, notice that for $\hn$-a.e.\ $x\in F$ there exist the traces $(\widehat{v})^+_F(x)$, $(\widehat{v})^-_F(x) \in \Rn$ (cf.\ \eqref{0106172148} and recall that $F$ is an hyperplane, so in particular of class $C^1$).  By \eqref{0106172148}, and since $\nu_F=e_n$, we get
 \begin{equation*}
 (\widehat{v})^-_F(x)= \aplim \limits_{y\in R, \, y\to x}  v(y)\,,\quad\qquad (\widehat{v})^+_F(x)= \aplim \limits_{y\in R', \, y\to x}  v'(y)
 \end{equation*}
 Notice now that for any $x\in F$ and $\varrho>0$ small enough $B^-_\varrho(x)=B_\varrho(x)\cap R$, $B^+_\varrho(x)=B_\varrho(x)\cap R'$, and, since $A_\mu(x)=A_\nu(x)=x$ for every $x\in F$, we have that for any $\varepsilon >0$
 \begin{equation*}
 \begin{split}
  B^+_\varrho(x) \cap \{|v(A_\mu(y))-v(A_\mu(x))| > \varepsilon \} &=  B^+_\varrho(x) \cap \{|v(A_\mu(y))-v(x)| > \varepsilon \} \\&\subset  B^-_{C(\mu)\varrho}(x) \cap \{|v(y)-v(x)| > \varepsilon\} \,,
  \end{split}
 \end{equation*}
 so that 
\begin{equation*} 
 \aplim \limits_{y\in R', \, y\to x}  v(A_{\mu}y)= \aplim \limits_{y\in R, \, y\to x}  v(y)= (\widehat{v})^-_F(x)\,,
 \end{equation*} 
 and the same holds for $\nu$ in place of $\mu$. Therefore we can argue as in Remark~\ref{3105171143} to conclude that
 \begin{equation*}
 (\widehat{v})^+_F(x)= (\widehat{v})^-_F(x)
 \end{equation*}
 Indeed, we employ the fact that, by \eqref{1701182113} and \eqref{1701182114}, for every $i=1,\dots,n$ the function $v'_i$ is a combination of $v_i(A_\mu x)$ and $v_i(A_\nu x)$ with two coefficients whose sum is $1$. This gives \eqref{2705170936} and in particular we observe that almost every slice $\widehat{(\widehat{v})}\xy $ does not jump on the unique point of $F\xy$, so that we can take 
 \[\lambda_{\widehat{v}}(B)=\lambda_{v}(B\cap R) + \lambda_{v'}(B\cap R')\]
 in the characterisation of $\widehat{v}\in GSBD$.

The first condition in \eqref{3005172337} now gives that
\begin{equation*}
\hn(J_{v'})\leq  \hn(A_\mu^{-1}(J_v))+  \hn(A_\nu^{-1}(J_v)) \,,
\end{equation*}
and \eqref{2705170937} follows. 
By the second condition in \eqref{3005172337} we deduce \eqref{2705170938}.
 Notice that the constant  $c$ depends on $p$, $\mu$, and $\nu$, but is independent on $R$ and $v$.
\end{proof}

Let us recall the following important result, proven in \cite[Proposition~3]{CCF16}. Notice that the result is stated in $SBD$, but the proof, which is based on the Fondamental Theorem of Calculus along lines, still holds for $GSBD$, with small adaptations.
\begin{proposition}\label{prop:3CCF16}
Let $Q =(-r,r)^n$, $Q'=(-r/2, r/2)^n$, $u\in GSBD^p(Q)$, $p\in [1,\infty)$. Then there exist a Borel set $\omega\subset Q'$ and an affine function $a\colon \Rn\to\Rn$ with $e(a)=0$ such that $\mathcal{L}^n(\omega)\leq cr \hn(J_u)$ and
\begin{equation}\label{prop3iCCF16}
\int\limits_{Q'\setminus \omega}(|u-a|^{p}) ^{1^*} \dx\leq cr^{(p-1)1^*}\Bigg(\int\limits_Q|e(u)|^p\dx\Bigg)^{1^*}\,.
\end{equation}
If additionally $p>1$, then there is $q>0$ (depending on $p$ and $n$) such that, for a given mollifier $\varphi_r\in C_c^{\infty}(B_{r/4})\,, \varphi_r(x)=r^{-n}\varphi_1(x/r)$, the function $v=u \chi_{Q'\setminus \omega}+a\chi_\omega$ obeys
\begin{equation}\label{prop3iiCCF16}
\int\limits_{Q''}|e(v\ast \varphi_r)-e(u)\ast \varphi_r|^p\dx\leq c\left(\frac{\hn(J_u)}{r^{n-1}}\right)^q \int\limits_Q|e(u)|^p\dx\,,
\end{equation}
where $Q''=(-r/4,r/4)^n$.
The constant in \eqref{prop3iCCF16} depends only on $p$ and $n$, the one in \eqref{prop3iiCCF16} also on $\varphi_1$.  
\end{proposition}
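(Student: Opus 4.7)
The plan is to follow the scheme of \cite[Proposition~3]{CCF16}, adapting it to $GSBD$. The only genuinely new ingredient is the use of \eqref{3105171445} (with the truncated jump set $J^1$) in place of the plain $BV$ total variation of slices available in $SBD$; slicing, Chebyshev, the Fundamental Theorem of Calculus on good slices, a Korn-type construction of $a$, and the Loomis--Whitney exponent $1^* = n/(n{-}1)$ all transfer essentially verbatim.

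First I would work direction by direction. Fix $\xi$ in the coordinate basis $\{e_1,\dots,e_n\}$. By \eqref{3105171445},
\[
\int\limits_{\Pi^\xi}\!\Big(|D\widehat u^\xi_y|\big(Q^\xi_y \setminus J^1_{\widehat u^\xi_y}\big) + \mathcal{H}^0\big(Q^\xi_y \cap J^1_{\widehat u^\xi_y}\big)\Big)\dh(y) \leq \lambda_u(Q),
\]
and one may take $\lambda_u \lesssim \hn\llcorner J_u + |e(u)|\mathcal L^n$. A Chebyshev argument selects a set $\omega_\xi \subset \Pi^\xi$ of ``bad'' slices with $\hn(\omega_\xi) \lesssim \hn(J_u)/r$; outside $\omega_\xi$, every slice has few amplitude-$\geq 1$ jumps and small non-jump variation. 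Thickening these bad slices into a subset of $Q'$ and taking the union over the $n$ coordinate directions produces $\omega$ with $\mathcal L^n(\omega) \lesssim r\,\hn(J_u)$. On each good slice, the FTC for $SBV_{\mathrm{loc}}$ functions applied between consecutive large-jump points, combined with the identification \eqref{3105171927} $\nabla \widehat u^\xi_y = e(u)^\xi_y\,\xi \cdot \xi$, gives the pointwise bound
\[
|\widehat u^\xi_y(t_2) - \widehat u^\xi_y(t_1)| \leq \int_{t_1}^{t_2} \big|e(u)(y+s\xi)\xi\cdot \xi\big|\,ds + |D\widehat u^\xi_y|\big([t_1,t_2]\setminus J^1_{\widehat u^\xi_y}\big).
\]

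Next, the affine function $a$ with $e(a) = 0$ is written as $a(x) = b + Wx$ with $W$ skew-symmetric, and $(b, W)$ is selected by projecting $u$ onto the finite-dimensional space of infinitesimal rigid motions on a subcube of $Q'$ made of good slices for every coordinate direction. Integrating the slice-wise bound along each direction, raising to the $p$-th power via H\"older (which produces the scaling factor $r^{(p-1)1^*}$), and then applying a Loomis--Whitney-type inequality to pass from ``$L^1$ on each codimension-one hyperplane'' to ``$L^{1^*}$ on the cube'' yields \eqref{prop3iCCF16}, with the exponent $1^* = n/(n-1)$ coming precisely from Loomis--Whitney.

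The main obstacle is \eqref{prop3iiCCF16}. Setting $v = u\chi_{Q'\setminus\omega} + a\chi_\omega$, so that $v - u = (a-u)\chi_\omega$ and $e(a) = 0$, the difference $e(v*\varphi_r) - e(u)*\varphi_r$ on $Q''$ is the symmetric part of $\nabla\varphi_r \ast \big((a-u)\chi_\omega\big)$ plus a contribution carried by the jump of $v$ along $\partial\omega$ and $J_u \cap \omega^c$. Using $\|\nabla\varphi_r\|_\infty \lesssim r^{-n-1}$, H\"older's inequality to pair $\mathcal L^n(\omega) \lesssim r\,\hn(J_u)$ with the $L^{p\cdot 1^*}$ estimate of $u-a$ from part (i), and an interpolation between $L^p$ and $L^{p\cdot 1^*}$, one extracts a positive exponent $q = q(p,n) > 0$ of the ratio $\hn(J_u)/r^{n-1}$ multiplying $\int_Q|e(u)|^p\dx$. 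The hypothesis $p > 1$ enters essentially in this interpolation step to ensure $q > 0$, and the $GSBD$ adaptation requires no further modification beyond keeping the truncated slice jump set $J^1_{\widehat u^\xi_y}$ consistently throughout the slicing estimates.
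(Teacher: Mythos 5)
You should note first that the paper does not reprove this proposition at all: it is quoted from \cite[Proposition~3]{CCF16}, and the only thing the authors check is that the $SBD$ proof, based on the Fundamental Theorem of Calculus along lines, survives in $GSBD$ --- which it does because condition (b) of Definition~\ref{def:GBD}, i.e.\ \eqref{3105171445}, together with \eqref{3105171927}, supplies exactly the one-dimensional structure used there (and, as you observe, $\lambda_u=|e(u)|\mathcal{L}^n+2\hn\llcorner J_u$ is an admissible choice for $u\in GSBD^p$ with $\hn(J_u)<\infty$). Your sketch instead tries to rerun the whole argument of \cite{CCF16}, and as written it has two genuine gaps. The first is the restriction to the coordinate directions: for $\xi=e_i$ the slice derivative identifies only $e(u)e_i\cdot e_i=\partial_i u_i$, so your good-slice FTC bounds see none of the off-diagonal strains. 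A field such as $u(x)=(f(x_2),0,\dots,0)$ with $f$ nonlinear has all coordinate slices constant (hence every slice is ``good'' with zero one-dimensional variation and no jumps), so the proposed Chebyshev selection plus Loomis--Whitney combination would bound $\inf_a\int(|u-a|^p)^{1^*}$ by a quantity built only from the diagonal strains, which vanishes here --- a false conclusion. Any slicing proof of a Korn-type inequality must use further directions (e.g.\ $e_i\pm e_j$) and, crucially, a mechanism to transfer information between lines of different directions; this, together with the actual construction of $a$, is the core of \cite{CCF16}. Relatedly, ``projecting $u$ onto infinitesimal rigid motions on a subcube of good slices'' is circular: such a projection presupposes an a priori $L^p$ bound for $u$ on that subcube, which is precisely what \eqref{prop3iCCF16} is meant to produce, and which a $GSBD$ function need not have anywhere.

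The second gap is in \eqref{prop3iiCCF16}. You write the error as the symmetric part of $\nabla\varphi_r\ast\big((a-u)\chi_\omega\big)$ and then pair $\mathcal{L}^n(\omega)\leq c r\hn(J_u)$ with the $L^{p\cdot 1^*}$ bound from part (i) via H\"older and interpolation. But \eqref{prop3iCCF16} controls $u-a$ only on $Q'\setminus\omega$, and in $GSBD$ the function $u$ need not be integrable on $\omega$ at all, so the quantities $\int_\omega|u-a|\dx$ or $\|u-a\|_{L^{p}(\omega)}$ that your estimate requires are not controlled --- in general not even finite. The convolution difference has to be estimated without ever invoking $u$ on $\omega$ (for instance slice-wise, or through the distributional symmetric derivative of $v$, whose contribution on $\omega$ involves only $a$ and the traces of $u$ from outside $\omega$); this is exactly the delicate point of \cite[Proposition~3]{CCF16}, where also the hypothesis $p>1$ and the exponent $q$ enter, and it is not closed by the argument as sketched. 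In short, your outline reproduces the general flavour (slicing, good/bad lines, FTC, the $1^*$ exponent from a Gagliardo/Loomis--Whitney combination), but the two steps above fail as written; for the purposes of this paper the correct move is to cite \cite{CCF16} and verify only the $SBD\to GSBD$ adaptations, which is what the authors do.
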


\begin{remark}
Condition \eqref{prop3iCCF16} is a Korn-Poincaré-type inequality, which guarantees the existence of an affine function $a$ such that, up to a small exceptional set, $u-a$ is controlled in a space better than $L^p$. The control in the optimal space $L^{p^\ast}$ is obtained only if $p=1$.
Even on the exceptional set, the affine function $a$ is in some sense ``close in energy'' to $u$, as follows from \eqref{prop3iiCCF16}.
 \end{remark}
 
 \begin{remark}
 By H\"older inequality and \eqref{prop3iCCF16} it follows that
 \begin{equation}\label{prop3CCFHolder}
\int\limits_{Q'\setminus \omega} |u-a|^{p} \dx\leq \mathcal{L}^n(Q'\setminus \omega)^{1/n}\Bigg(\int\limits_{Q'\setminus \omega}(|u-a|^{p}) ^{1^*} \dx\Bigg)^{1/1^*}\hspace{-1em}\leq  c r^p \int\limits_Q|e(u)|^p\dx
 \end{equation}

 \end{remark}

The following lemma will be employed in zones where the jump of $u$ is small, compared to the side of the square.
It will be useful to estimate, for two cubes with nonempty intersection, the difference of the corresponding affine functions.
\begin{lemma}\label{le:diffaffini}
For every $\alpha_i \in \{-1,0,1\}^n$, with $\alpha_0 = 0$, let $z_i=\frac{r}{2} \alpha_i \in \Rn$ and $Q_i$, $Q'_i$, $Q''_i$ be the $n$-dimensional cubes of center $z_i$ and sidelength $2r$, $r$, $r/2$, respectively (assume $r<1$). Let $u\in GSBD(B(0,6r))$   
and, for $i=0,\dots,3^n$, let
 $a_i$ and $\omega_i$ be the affine function and the exceptional set given by Proposition~\ref{prop:3CCF16}, corresponding to $Q_i$.
Assume that for every $i=0,\dots,3^n$
\begin{equation}\label{3005172113}
\hn(J_u\cap Q_i) \leq \theta r^{n-1}\,,
\end{equation}
with $\theta$ sufficiently small (for instance $\theta\leq 1/(16 c)$, for $c$ as in \eqref{prop3iCCF16}.
Then there exists a constant $C$, depending only on $p$ and $n$, such that for each $i \neq 0$
\begin{equation}\label{eq:diffaffinifinale}
\|a_0-a_i\|_{L^\infty(Q_0\cap Q_i;\,\Rn)}^p\leq C r^{-(n-p)}\hspace{-1em} \int \limits_{Q_0\cup Q_i}\hspace{-0.5em}|e(u)|^p\dx\,,
\end{equation}
\end{lemma}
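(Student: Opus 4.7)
The plan is to combine the Korn–Poincar\'e-type estimate \eqref{prop3CCFHolder} applied to the overlapping cubes $Q_0$ and $Q_i$ with an elementary norm-equivalence argument on the finite-dimensional space of rigid motions. The key observation is that since $a_0$ and $a_i$ both satisfy $e(a_j)=0$, their difference $a_0-a_i$ belongs to the finite-dimensional space $\mathcal{R}$ of affine maps $\Rn\to\Rn$ with skew-symmetric linear part.

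First I would identify a substantial ``good'' set on which both $|u-a_0|$ and $|u-a_i|$ are small. The cubes $Q_0'$ and $Q_i'$ have center distance at most $r\sqrt{n}/2$ and half-side $r/2$, so an elementary calculation gives $\mathcal{L}^n(Q_0'\cap Q_i')\geq (r/2)^n$. By Proposition~\ref{prop:3CCF16}, the exceptional sets satisfy $\mathcal{L}^n(\omega_j)\leq c\,r\,\hn(J_u\cap Q_j)\leq c\theta r^n$; choosing $\theta$ small enough (depending on $n$ and $c$) ensures that
$$E:=(Q_0'\cap Q_i')\setminus (\omega_0\cup\omega_i)$$
has measure at least $\sigma r^n$ for some $\sigma=\sigma(n)>0$. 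Since $E\subset Q_0'\setminus\omega_0$ and $E\subset Q_i'\setminus\omega_i$, the inequality \eqref{prop3CCFHolder} applied to both cubes together with $|a_0-a_i|^p\leq 2^{p-1}(|u-a_0|^p+|u-a_i|^p)$ yields
$$\int_E |a_0-a_i|^p\dx \leq C\, r^p \int_{Q_0\cup Q_i} |e(u)|^p\dx.$$

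The main obstacle is upgrading this $L^p$ bound on the \emph{varying} set $E$ into the $L^\infty$ bound on $Q_0\cap Q_i$ with a constant depending only on $n$ and $p$. After the rescaling $y=x/r$, the issue reduces to the following statement on $\mathcal{R}$: for any $\sigma>0$ and any fixed bounded open $U\subset\Rn$, there exists $C=C(\sigma,n,p,U)$ such that
$$\|a\|_{L^\infty(U)}^p\leq C\int_F |a|^p\,\mathrm{d}y \qquad \text{for every } a\in\mathcal{R} \text{ and every Borel } F\subset U \text{ with } \mathcal{L}^n(F)\geq\sigma.$$
I would prove this by contradiction and compactness: if it failed, there would exist sequences $a_k\in\mathcal{R}$ with $\|a_k\|_{L^\infty(U)}=1$ and $F_k\subset U$ with $\mathcal{L}^n(F_k)\geq\sigma$ such that $\int_{F_k}|a_k|^p\,\mathrm{d}y\to 0$. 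By finite-dimensionality, a subsequence $a_k\to a_\infty$ uniformly on $U$ with $\|a_\infty\|_{L^\infty(U)}=1$; then $\{a_\infty=0\}$ has $\mathcal{L}^n$-measure zero (as $a_\infty$ is a nontrivial affine map), which contradicts the fact that $F_k$ concentrates on this set (by Fatou, $\int_U|a_\infty|^p\chi_{F_k}\,\mathrm{d}y\to 0$ while $\mathcal{L}^n(F_k)\geq\sigma$).

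Combining the rescaled version of this abstract estimate (applied to $U=B(0,C_n)$ containing the rescaled $Q_0\cap Q_i$ and $E/r$) with the previous bound gives
$$\|a_0-a_i\|_{L^\infty(Q_0\cap Q_i;\,\Rn)}^p\leq C\,r^{-n}\int_E |a_0-a_i|^p\dx \leq C\, r^{p-n}\int_{Q_0\cup Q_i}|e(u)|^p\dx,$$
which is exactly \eqref{eq:diffaffinifinale}. All constants depend only on $p$ and $n$, since both the Korn–Poincar\'e constant in \eqref{prop3CCFHolder} and the norm-equivalence constant are intrinsic.
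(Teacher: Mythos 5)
Your proof is correct, and it follows the same overall strategy as the paper's: apply the Korn--Poincar\'e estimate of Proposition~\ref{prop:3CCF16} on both overlapping cubes, observe that $\omega_0\cup\omega_i$ occupies only a small fraction of $Q_0'\cap Q_i'$ when $\theta$ is small, and then use the fact that for affine functions an integral bound on a set of proportional measure controls the sup norm on the whole (comparably sized) region, with the right power of $r$ coming out by scaling. Where you differ is in how that last ``norm equivalence on affine functions'' step is justified: the paper quotes the quantitative estimate of \cite[Lemma~4.3]{ConFocIur15}, namely $\mathcal{L}^n(Q_0'\cap Q_i')\,\|a\|_{L^\infty(Q_0'\cap Q_i')}\leq \ol c \int_{(Q_0'\sm\omega_0)\cap(Q_i'\sm\omega_i)}|a|\dx$, runs it through H\"older at the exponent $p\,1^*$ so as to use \eqref{prop3iCCF16} directly, and then passes from $Q_0'\cap Q_i'$ to $Q_0\cap Q_i$ by a separate affine-extension estimate; you instead prove the needed inequality yourself, after rescaling to unit size, by a compactness argument on the finite-dimensional space of infinitesimal rigid motions (quantified over all subsets $F$ of measure at least $\sigma$), work at exponent $p$ via \eqref{prop3CCFHolder}, and absorb the enlargement to $Q_0\cap Q_i$ by taking $U$ a fixed ball containing the rescaled cubes. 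The trade-offs are minor: your route is self-contained and avoids the external citation, at the price of a non-explicit constant from the compactness argument (the paper's constant is in principle explicit), and your invocation of ``Fatou'' should really just be uniform convergence of $a_k$ to $a_\infty$ plus a Chebyshev bound using that the zero set of a nontrivial affine map is Lebesgue-null. One small point to make explicit: your measure count on $E$ needs $\theta\lesssim 2^{-n}/c$, i.e.\ smallness depending on $n$ as you say; this matches the lemma's ``$\theta$ sufficiently small'' (and the paper's own proof needs the same dimensional smallness, the parenthetical $\theta\leq 1/(16c)$ notwithstanding), so it is not a gap.
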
 

\begin{proof}
By \eqref{3005172113} we have that
\begin{equation*}
\begin{split}
\mathcal{L}^n(\omega_0\cup\omega_i) \leq cr\left( \hn(J_u\cap Q_0)+\hn(J_u\cap Q_i)\right) \leq  2c \, \theta \, r^n \leq \frac{\mathcal{L}^n(Q_0'\cap Q_i')}{4} \,.
\end{split}
\end{equation*}

Therefore, following the argument of \cite[Lemma~4.3]{ConFocIur15} for the rectangles $Q'_0 \cap Q'_i$ in place of $B$ (notice that for a given $i$ the shape of these rectangles is the same independently of $r$, that is the ratios between the sidelengths are independent of $r$) one has that for any affine function $a\colon \Rn\to \Rn$
\begin{equation*}
\mathcal{L}^n(Q'_0\cap Q'_i) \|a\|_{L^\infty(Q'_0\cap Q'_i;\,\Rn)} \leq \ol c \hspace{-2em}\int\limits_{(Q'_0\setminus \omega_{0}) \cap (Q'_i\setminus \omega_{i})}\hspace{-3em}|a| \dx \,,
\end{equation*}
for $\ol c>0$ depending only on $n$ (and on $i$).
By H\"older's inequality we deduce that for any $q\in [1,\infty)$  
\begin{equation*}\label{3005172103}
\mathcal{L}^n(Q'_0\cap Q'_i) \|a\|_{L^\infty(Q'_0\cap Q'_i;\,\Rn)}^q \leq \ol c\,^q \hspace{-2em}\int\limits_{(Q'_0\setminus \omega_{0}) \cap (Q'_i\setminus \omega_{i})}\hspace{-3em}|a|^q \dx \,.
\end{equation*}
For $q= p \, 1^*$ and $a= a_0-a_i$ we get 
\begin{equation}\label{3005172127}
\mathcal{L}^n(Q'_0\cap Q'_i) \|a_0-a_i\|_{L^\infty(Q'_0\cap Q'_i;\,\Rn)}^{p \,1^*}\leq \ol c\,^{p\,1^*} \hspace{-2em}\int \limits_{(Q'_0\setminus \omega_{0}) \cap (Q'_i\setminus \omega_{i})} \hspace{-3em} |a_0-a_i|^{p\,1^*}\dx\,.
\end{equation}
By triangle inequality and by \eqref{prop3iCCF16} it follows that
\begin{equation}\label{3005172128}
\int \limits_{(Q'_0\setminus \omega_{0}) \cap (Q'_i\setminus \omega_{i})} \hspace{-3em} |a_0-a_i|^{p\,1^*}\dx\leq \hspace{-3em} \int \limits_{(Q'_0\setminus \omega_{0}) \cap (Q'_i\setminus \omega_{i})} \hspace{-3em} \left(|u-a_0|+|u-a_i|\right)^{p\,1^*}\dx \leq c r^{(p-1)\,1^*} \Bigg( \int \limits_{Q_0\cup Q_i}\hspace{-0.5em}|e(u)|^p\dx \Bigg)^{1^*}\,.
\end{equation}
Moreover, since $a_0-a_i$ is an affine function, we have that
\begin{equation}\label{0805172328}
\|a_0-a_i\|_{L^\infty(Q_0\cap Q_i;\,\Rn)}\leq \ol C \|a_0-a_i\|_{L^\infty(Q_0'\cap Q_i';\,\Rn)}
\end{equation} 
for a constant $\ol C$ depending only on the ratio between $\mathcal{L}^n(Q_0\cap Q_i)$ and $\mathcal{L}^n(Q_0'\cap Q_i')$, which is independent of $r$. 

We deduce \eqref{eq:diffaffinifinale} by collecting \eqref{3005172127}, \eqref{3005172128}, and \eqref{0805172328}.
\end{proof}

\section{A first approximation result with a bad constant}\label{Sec2}

As in \cite{Cha04, Iur14, CFI17Density}, a first step toward the main density result consists in a rough approximation in the sense of energy.
In particular, in this section we construct an approximating sequence of functions whose jumps are controlled in terms of the original jump by a multiplicative parameter.
We employ this result in the next section for subdomains where the jump of the original function is very small, so that the total increase of energy will be small too.

\begin{theorem}\label{teo:rough}
Let $\Omega$, $\widetilde{\Omega}$ be bounded open
subsets of $\Rn$, with $\ol \Omega\subset \widetilde{\Omega}$, 
$p > 1$, $\theta \in (0,1)$, and let $u\in GSBD^p(\widetilde{\Omega})$. Then there exist
$u_k\in SBV^p(\Omega;\Rn)\cap L^\infty(\Omega; \Rn)$ and $E_k\subset \Omega$ Borel sets such that
$J_{u_k}$ is included in a finite union of $(n{-}1)$--dimensional closed cubes, $u_k\in W^{1,\infty}(\Omega\setminus J_{u_k}; \Rn)$, and the following hold:
\begin{subequations}
\begin{align}
\lim_{k\to \infty}\mathcal{L}^n(E_k)= \lim_{k\to \infty} & \int\limits_{\Omega\setminus E_k} |u_k -u|^p \dx = 0 \,, \label{1rough}\\
\limsup_{k\to \infty} \int\limits_\Omega  |e(u_k)|^p \dx & \leq \int\limits_\Omega |e(u)|^p \dx \,, \label{2rough}\\
\hn(J_{u_k}\cap \Omega)& \leq C \,\theta^{-1} \hn(J_u\cap \widetilde{\Omega})\,,\label{3rough}
\end{align}
for suitable $C>0$ independent of $\theta$.
In particular, $u_k$ converge to $u$ in measure in $\Omega$.      
Moreover, if $
\int\limits_\Omega \psi(|u|)\dx
$ is
finite 
for $\psi\colon [0,\infty)\to [0,\infty)$ 
increasing, 
continuous, and satisfying \eqref{hppsi} (see Theorem~\ref{teo:main}), then
\begin{equation}\label{4rough}
\lim_{k\to \infty} \int\limits_{\Omega}\psi(|u_k-u|)\dx = 0 \,.
\end{equation}
\end{subequations}
\end{theorem}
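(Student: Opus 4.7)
\textbf{The plan is} to perform a cubic decomposition at scale $r_k=k^{-1}$ and to apply Proposition~\ref{prop:3CCF16} on those cubes where the jump of $u$ is sparse. First, I would tile $\widetilde\Omega$ by cubes $Q_z^k$ of side comparable to $r_k$, centred at lattice points $z$, and on each enlarged cube $\tilde Q_z^k$ (the ``$Q$'' of Proposition~\ref{prop:3CCF16}) I obtain an infinitesimal rigid motion $a_z^k$ (with $e(a_z^k)=0$) and an exceptional set $\omega_z^k$ with $\mathcal{L}^n(\omega_z^k)\leq c\, r_k\, \mathcal{H}^{n-1}(J_u\cap\tilde Q_z^k)$. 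I call $z$ \emph{good} if $\mathcal{H}^{n-1}(J_u\cap\tilde Q_z^k)\leq \theta\, r_k^{n-1}$, and \emph{bad} otherwise; since each point of $\Rn$ belongs to a bounded number of enlarged cubes $\tilde Q_z^k$, the cardinality of the bad indices is at most $C\,\theta^{-1}\,r_k^{-(n-1)}\,\mathcal{H}^{n-1}(J_u)$. Hence the union $B_k$ of bad $Q_z^k$ has measure $\leq C\,\theta^{-1}\,r_k\,\mathcal{H}^{n-1}(J_u)\to 0$ and $\mathcal{H}^{n-1}(\partial B_k)\leq C\,\theta^{-1}\,\mathcal{H}^{n-1}(J_u)$.

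\textbf{Construction and estimates.} I would set $u_k\equiv 0$ on $B_k$; on the good region I consider the \emph{single} modified function $v^k$ equal to $u$ outside $\bigcup_z \omega_z^k$ (for good $z$) and equal to $a_z^k$ on each $\omega_z^k$ (with a canonical choice on overlaps), and I define $u_k := v^k*\varphi_{r_k}$, where $\varphi_{r_k}$ is the mollifier of Proposition~\ref{prop:3CCF16}. Estimate \eqref{prop3iiCCF16} yields cube by cube $\|e(u_k)-e(u)*\varphi_{r_k}\|_{L^p(Q''_z)}^p \leq c\,\theta^q \int_{\tilde Q_z^k}|e(u)|^p\,\mathrm dx$, which on summation over good $z$, together with $e(u)*\varphi_{r_k}\to e(u)$ in $L^p$, gives \eqref{2rough} (the contribution of $B_k$ is absorbed as $\mathcal{L}^n(B_k)\to 0$). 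From \eqref{prop3CCFHolder} combined with standard properties of convolution, $u_k\to u$ in $L^p$ outside $E_k := B_k\cup\bigcup_{z\text{ good}}\omega_z^k$, whose Lebesgue measure vanishes, giving \eqref{1rough}. For \eqref{4rough} I would split $\int_\Omega \psi(|u_k-u|)\,\mathrm dx$ into $E_k$ and $\Omega\setminus E_k$: on $\Omega\setminus E_k$ Vitali's convergence theorem applies thanks to the $p$-growth of $\psi$ and $|u_k-u|^p\to 0$ in $L^1$, while on $E_k$ the subadditivity in \eqref{hppsi}, together with $\psi(0)=0$, $\int_\Omega\psi(|u|)<\infty$ and $\mathcal{L}^n(E_k)\to 0$, yields absolute continuity.

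\textbf{The main obstacle} is to prevent $J_{u_k}$ from containing \emph{all} good-good cube faces, which would make $\mathcal{H}^{n-1}(J_{u_k})$ blow up like $r_k^{-1}$ and so destroy \eqref{3rough}. This is exactly why I use a \emph{single} global $v^k$ and a single global convolution, rather than applying Proposition~\ref{prop:3CCF16} cube by cube: since $v^k$ is one function on the good region, $u_k=v^k*\varphi_{r_k}$ is automatically continuous across every face between two good cubes, and $J_{u_k}$ is contained in $\partial B_k$, a finite union of $(n-1)$-dimensional closed cubes of total $\mathcal{H}^{n-1}$-measure $\leq C\theta^{-1}\mathcal{H}^{n-1}(J_u)$ --- this is \eqref{3rough}. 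The compatibility of the competing affine pieces $a_z^k$ on overlapping $\omega_z^k$ is provided by the $L^\infty$ bound of Lemma~\ref{le:diffaffini}, which ensures that the disambiguation does not damage the energy estimates. Finally, a truncation of $u_k$ at a level $M_k\to\infty$ (large enough not to perturb the estimates above) delivers the required $u_k\in L^\infty(\Omega;\Rn)$.
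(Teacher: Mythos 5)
Your construction is essentially the paper's: good/bad nodes at scale $k^{-1}$ with threshold $\theta$, Proposition~\ref{prop:3CCF16} on the good cubes, a \emph{single} globally modified function (affine replacements on the $\omega_z$'s, with a canonical choice on overlaps controlled via Lemma~\ref{le:diffaffini}) convolved once with $\varphi_k$, and $u_k=0$ on the bad region, so that $J_{u_k}$ sits on boundaries of bad cubes and \eqref{3rough}, \eqref{1rough} follow as you say. The genuine gap is in \eqref{2rough}. Summing the cube-wise estimate $\|e(u_k)-e(u)\ast\varphi_k\|_{L^p(Q''_z)}^p\leq c\,\theta^q\int_{\tilde Q_z^k}|e(u)|^p\dx$ over the good nodes (plus the comparable error coming from the disambiguation on overlapping $\omega_z$'s) leaves a total error of order $\theta^{q'}\|e(u)\|_{L^p}^p$ which does \emph{not} vanish as $k\to\infty$, since $\theta$ is a fixed parameter of the theorem (it must be: it enters \eqref{3rough} as $\theta^{-1}$). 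So your argument only yields $\limsup_k\int_\Omega|e(u_k)|^p\dx\leq(1+C\theta^{q'})\int_\Omega|e(u)|^p\dx$, not \eqref{2rough}. The paper removes the $\theta$-error with a second, $k$-dependent threshold: good nodes are further split according to whether $\hn(J_u\cap Q_z^k)\leq k^{-(n-\frac12)}$, and one works with the nodes all of whose neighbours satisfy this finer condition. On those, every error estimate holds with $k^{-1/2}$ in place of $\theta$, hence vanishes as $k\to\infty$; the remaining good nodes (the set $\widetilde G^k_2$) occupy a region of volume $O(k^{-1/2})$, so their contribution $\int|e(u)|^p$ vanishes by absolute continuity of the integral. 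Without this (or an equivalent device) you do not get constant $1$ in \eqref{2rough}.

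A secondary point: for \eqref{4rough} on $E_k$ you invoke absolute continuity, but the term $\int_{E_k}\psi(|u_k|)\dx$ is not controlled by $\mathcal{L}^n(E_k)\to0$ alone, because on $\omega^k$ the function $u_k$ is built from the affine maps $a_z$, whose size is not pointwise dominated by $u$. You need the paper's intermediate step: compare $\int_{\omega_z}\psi(|a_z|)\dx$ with $\tfrac{\mathcal{L}^n(\omega_z)}{\mathcal{L}^n(q_z^k)}\int_{q_z^k}\psi(|a_z|)\dx$ (affine functions have comparable averages on comparable sets), then bound $\psi(|a_z|)$ on the whole small cube by $\psi(|u|)$ plus the $L^p$-distance $\|u-a_z\|_{L^p}$ coming from \eqref{prop3iCCF16}, using \eqref{hppsi}; the factor $\theta$ (or $k^{-1/2}$) in front then lets these terms vanish. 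With these two repairs your proof coincides with the paper's.
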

 The proof of the result above employs a technique introduced in \cite{CCF17}, which is based on Proposition~\ref{prop:3CCF16}.
The idea is to partition the domain into cubes of side $\tfrac{1}{k}$ and to distinguish, at any scale, the cubes where the ratio between the perimeter and the jump of $u$ is greater than the parameter $\theta$.

In such cubes, one may replace the original function $u$ with a constant function, since on the one hand the new jump is controlled by the original jump, and on the other hand the total volume of these cubes is small as the length scale goes to 0.

In the remaining cubes, where the relative jump is small, one applies Proposition~\ref{prop:3CCF16}: a Korn-Poincaré-type inequality holds up to a set of small volume, and in this small exceptional set the original function may be replaced by a suitable affine function without perturbing much its energy. 
We need $u$ be defined in a larger set $\widetilde{\Omega}$ since we will take convolutions of the original function.

\begin{proof}[Proof of Theorem~\ref{teo:rough}] 
Let $\ol \Omega\subset \widetilde{\Omega}$, $p >1$, $\theta\in (0,1)$, and $u \in GSBD^p(\widetilde{\Omega})$. 
Let us fix an integer $k$ with $k> \frac{ 12  \sqrt{n}}{\mathrm{dist }(\partial \Omega, \partial \widetilde{\Omega})}$, let $\varphi$ be a smooth radial function with compact support in the unit ball $B(0,1)$, 
and let $\varphi_k(x)=k^n \varphi(kx)$. 
\newline
\\
{\bf Good and bad nodes.}
For any $z\in (2 \km) \Z^n \cap  (\Omega+[-k^{-1},k^{-1}]^n) $ consider the cubes of center $z$
\begin{equation*}
\begin{split}
q_z^k&:=z+(-\km,\km)^n\,,\quad \hspace{1.3em}\tq_z^k:= z+(-2\km,2\km)^n\,,\\ Q_z^k&:=z+(-4\km,4\km)^n\,, \quad \widetilde{Q}_z^k:=z+(-8\km,8\km)^n\,.
\end{split}
\end{equation*}
Let us define the sets of the ``good'' and of the ``bad'' nodes
\begin{equation}\label{eq:defGoodBad}
\begin{split}
 & G^k:=\{z\in (2 \km) \Z^n \cap  (\Omega+[-k^{-1},k^{-1}]^n)  : \hn(J_u\cap \Qz)\leq \theta k^{-(n-1)}\}\,, \\
 & B^k:=(2 \km) \Z^n \cap  (\Omega+[-k^{-1},k^{-1}]^n)  \setminus G^k\,,
\end{split}
\end{equation}
such that the amount of jump of $u$ is small in a big neighbourhood of any $z\in G^k$, and the corresponding subsets of $\widetilde{\Omega}$
\begin{equation*}
\Omega_g^k:=\bigcup_{z\in G^k} \qz\,,\quad \widetilde{\Omega}^k_b:=\bigcup_{z\in B^k}\Qz\,.
\end{equation*}
Notice that $\widetilde{\Omega}^k_b$ is the union of cubes of sidelength $8k^{-1}$, while $\Omega_g^k$ is the union of cubes of sidelength $2k^{-1}$, so that $ \ol{\Omega}\setminus \Omega_g^k  \subset \widetilde{\Omega}^k_b$. More precisely, 
\begin{equation}\label{1205171038}
 \ol{\Omega}\setminus \Omega_g^k  + B(0, k^{-1}) \subset \widetilde{\Omega}^k_b
\end{equation}
Indeed, by construction, a row of ``boundary'' cubes of $\Omega_g^k$ belongs to $\widetilde{\Omega}_b^k$.
Moreover, by  \eqref{eq:defGoodBad} the set $B^k$ has at most $\hn(J_u)\, k^{n-1} \theta^{-1}$ elements, so that
\begin{equation}\label{1805171025}
\mathcal{L}^n\left(\widetilde{\Omega}^k_b\right)\leq 16^n \frac{\hn(J_u)}{k\,\theta}\,. 
\end{equation} 
Let us apply Proposition~\ref{prop:3CCF16} for any $z\in G^k$.
Then there exist a set $\omega_z \subset \tqz$, with 
\begin{equation}\label{1005171230}
\mathcal{L}^n(\omega_z)\leq c k^{-1} \hn(J_u\cap \Qz) \leq c \theta k^{-n}\,,
\end{equation}
 and an affine function $a_z\colon \Rn \to \Rn$, with $e(a_z)=0$, such that 
\begin{equation}\label{prop3iCCF16applicata}
\int\limits_{\tqz\setminus \omega_z}(|u-a_z|^{p}) ^{1^*} \dx\leq ck^{-(p-1)1^*}\Bigg(\int\limits_{\Qz}|e(u)|^p\dx\Bigg)^{1^*}
\end{equation}
and, 
letting $v_z:= u\chi_{\tqz\setminus \omega_z}+a_z \chi_{\omega_z}$,
\begin{equation}\label{prop3iiCCF16applicata}
\begin{split}
\int\limits_{\qz}|e(v_z\ast \varphi_k)-e(u)\ast \varphi_k|^p\dx & \leq c\left(\hn(J_u\cap \Qz)\,k^{n-1}\right)^q \int\limits_{\Qz}|e(u)|^p\dx \\ & \leq c \, \theta^q \int\limits_{\Qz}|e(u)|^p\dx \,,
\end{split}
\end{equation}
for a suitable $q>0$ depending on $p$ and $n$.

Let 
\begin{equation}\label{1209181245}
\omega^k:= \bigcup_{z\in G^k} \omega_{z}\,,\qquad
E_k:=\widetilde{\Omega}_b^k \cup \omega^k\,.
\end{equation}
Then
\begin{equation}\label{1805171033}
\lim_{k\to \infty}\mathcal{L}^n(E_k)=0\,,
\end{equation}
by \eqref{1805171025} and \eqref{1005171230}, which gives $\mathcal{L}^n(\omega^k)\leq c k^{-1} \sum_{z\in G^k}  \hn(J_u\cap Q_{z}^k) \leq c \hn(J_u)\, k^{-1}$. 

We split the set of good nodes in the two subsets
\begin{equation*}
G^k_1:=\{z\in G^k \colon \hn(J_u\cap \Qz)\leq k^{-(n-\frac{1}{2})}\}\,, \qquad G^k_2:= G^k\setminus G^k_1\,.
\end{equation*}
Notice that $G^k_1$ are the good nodes for which the condition on $J_u$ is satisfied for $k^{-\frac{1}{2}}$ in place of $\theta$. 
For each $z \in G^k_1$, we have that \eqref{1005171230} and \eqref{prop3iiCCF16applicata} hold with $k^{-\frac{1}{2}}$ in place of $\theta$, namely
\begin{subequations}\label{eqs:0807171955}
\begin{align}
\mathcal{L}^n(\omega_z)&\leq c \,k^{-(n+\frac{1}{2})}\,,\label{0807171956}\\ 
\int\limits_{\qz}|e(v_z\ast \varphi_k)-e(u)\ast \varphi_k|^p\dx &\leq c \, k^{-\frac{q}{2}} \int\limits_{\Qz}|e(u)|^p\dx\,. \label{0807171957}
\end{align}
\end{subequations}
 
Let us introduce also
\begin{equation}
\begin{split}
\widetilde{G}^k_1&:=\{z\in G^k \colon \ol z \in G^k_1 \text{ for each } \ol z \in (2k^{-1}) \Z^n \text{ with } \| z - \ol z\|_\infty= 2k^{-1}\}\,,\\
\widetilde{G}^k_2&:=\{z\in G^k \colon \text{ there exists } \ol z \in G^k_2 \text{ with } \| z - \ol z\|_\infty= 2k^{-1}\}\,,
\end{split}
\end{equation}
where $\| z - \ol z\|_\infty:=\sup_{1\leq i\leq n} |z_i - \ol z_i| $ is the $L^\infty$ norm of the vector $z - \ol z$. 

Arguing as already done for $\widetilde{\Omega}^k_b$, we get that $G^2_k$ has at most $\hn(J_u)\, k^{n-\frac{1}{2}}$ elements, so 
$
\widetilde{G}^k_2$ has at most $(3^n-1) \hn(J_u)\, k^{n-\frac{1}{2}}$ elements, and  
\begin{equation}\label{0907170918}
\mathcal{L}^n(\widetilde{\Omega}^k_{g,2})\leq C\, k^{-\frac{1}{2}},\qquad \text{ for }\,\widetilde{\Omega}^k_{g,2}:= \bigcup_{z_j \in \widetilde{G}^k_2} \widetilde{Q}_{z_j}\,.
\end{equation}
{\bf The approximating functions.}
Let $G^k=(z_j)_{j\in J}$, so that we order (arbitrarily) the elements of $G^k$, and let us define 
\begin{equation}\label{eq:defappr1}
\widetilde{u}_k:=
\begin{cases}
 u \quad &\text{in }\widetilde{\Omega}\setminus \omega^k\,,\\
 a_{z_j}\quad &\text{in }\omega_{z_j}\setminus \bigcup_{i<j}\omega_{z_i}\,,
\end{cases}
\end{equation}
and
\begin{equation}\label{eq:defapprox}
u_k:= \left(\widetilde{u}_k \ast \varphi_k \right)\chi_{\Omega\setminus \widetilde{\Omega}_b^k}\,.
\end{equation}
These are the approximating functions for the original $u$, for which we are going to prove the properties of the theorem.

By construction, $u_k\in SBV^p(\Omega;\Rn)\cap L^\infty(\Omega; \Rn)$  (notice that $\Omega\setminus \widetilde{\Omega}_b^k + \mathrm{supp} \,\varphi_k \subset \Omega^k_g$, and $u \in L^p(\Omega^k_g \setminus \omega^k)$),
 $J_{u_k}\subset \bigcup_{z\in B^k} \partial \Qz$, which is a finite union of $(n{-}1)$--dimensional closed cubes, and $u_k\in W^{1,\infty}(\Omega\setminus J_{u_k}; \Rn)$.
\newline
\\
{\bf Proof of \eqref{3rough}.}
For any $z\in B^k$ we have that
\begin{equation*}
\hn(\partial \Qz)=C(n)\, k^{-(n-1)}\leq C(n)\,\theta^{-1}\hn(J_u\cap \Qz)\,,
\end{equation*}
so that \eqref{3rough} follows by summing over $z\in B^k$. Notice that we use here the fact that the cubes $\Qz$ are finitely overlapping; this will be done different times also in the following (also for the cubes $\tqz$, $\widetilde{Q}^k_z$).

To ease the reading, in the following we denote $\omega_{z_j}$ by $\omega_j$, and the same for $a_{z_j}$, $v_{z_j}$. We denote also $q^k_{z_j}$ by $q_j$, and the same for $\tilde{q}_{z_j}^k$, $Q_{z_j}^k$, $\widetilde{Q}_{z_j}^k$. Moreover, for any $g\colon \Omega\to \Rn$, $B\subset \Omega$, and $q\in [1, \infty]$ we write $\|g\|_{L^q(B)}$ instead of $\|g\|_{L^q(B;\Rn)}$.
\newline
\\
{\bf Proof of \eqref{1rough}.}
In order to prove \eqref{1rough} let us fix $j\in J$ such that $q_j \subset \Omega \setminus \widetilde{\Omega}_b^k$. By the triangle inequality
\begin{equation}\label{2305172312}
\|u_k-u\|_{L^p(q_j\setminus \omega^k)}\leq \|u_k-a_j\|_{L^p(q_j\setminus \omega^k)}+\|u-a_j\|_{L^p(q_j\setminus \omega^k)}
\end{equation} 
Notice that
\begin{equation*}
u_k-a_j=\varphi_k\ast (\widetilde{u}_k-a_j) \quad\text{in }\Omega\setminus \widetilde{\Omega}_b^k\,,
\end{equation*}
by definition of $\widetilde{u}_k$ and since $\varphi_k\ast a_j=a_j$, being $\varphi$ a radial function.

By \eqref{prop3CCFHolder} we get
\begin{equation}\label{2305172326}
\|u-a_j\|_{L^p(q_j\setminus \omega^k)}\leq \|u-a_j\|_{L^p(\tilde{q}_j\setminus \omega_j)}\leq c\,k^{-1} \Bigg(\int\limits_{Q_j}|e(u)|^p\dx\Bigg)^{1/p}\hspace{-1em}.
\end{equation}
We now estimate the first term on the right hand side of \eqref{2305172312} as follows:  
\begin{equation}\label{2305172355}
\begin{split}
\hspace{-1em}\|u_k-a_j\|_{L^p(q_j\setminus \omega^k)}&\leq \|u_k-a_j\|_{L^p(q_j)} = \|\varphi_k\ast \big((\widetilde{u}_k-a_j)\chi_{\tilde{q}_j}\big)\|_{L^p(q_j)}\\&\leq \|\varphi_k\ast \big((u-a_j)\chi_{\tilde{q}_j\setminus \omega^k}\big)\|_{L^p(q_j)}+ \|\varphi_k\ast \big((\widetilde{u}_k-a_j)\chi_{\tilde{q}_j\cap \omega^k}\big)\|_{L^p(q_j)}\\
&\leq \|u-a_j\|_{L^p(\tilde{q}_j\setminus \omega^k)}+\|\widetilde{u}_k-a_j\|_{L^p(\tilde{q}_j\cap \omega^k)}\,.
\end{split}
\end{equation}
Notice that we have used the fact that $q_j+\mathrm{supp}\,\varphi_k\subset q_j+B(0,k^{-1})\subset \tilde{q}_j$.
The first term on the right hand side of \eqref{2305172355} is estimated by \eqref{2305172326}. As for the second one we have, by definition of $\widetilde{u}_k$, that
\begin{equation}\label{1110171817}
\begin{split}
\int \limits_{\tilde{q}_{j}\cap \omega^k} \hspace{-0.5em} |\widetilde{u}_k-a_{j}|^p\dx & = \sum_{i\neq j} \int \limits_{\tilde{q}_{j}\cap \widetilde{\omega}_i} \hspace{-0.5em} |\widetilde{u}_k-a_{j}|^p\dx
= \sum_{i < j}  \int \limits_{\omega_{j}\cap \, \widetilde{\omega}_i}  \hspace{-0.5em}  |a_{i}-a_{j}|^p\dx\\
& + \sum_{i<j} \int\limits_{\tilde{q}_{j}\cap \widetilde{\omega}_i \sm \omega_j} \hspace{-0.8em} |u-a_j|^p \dx + \sum_{i>j} \int\limits_{\tilde{q}_{j}\cap \widetilde{\omega}_i} \hspace{-0.5em} |u-a_j|^p \dx
\end{split}
\end{equation}
where $\widetilde{\omega}_i:=\omega_{i}\setminus (\bigcup_{h<i} \omega_{h})$. 
Now, the sums above involve at most $3^n-1$ terms  corresponding to the centers $z_i$ with $z_j- z_i= 2k^{-1}\,\alpha_i$ and $\alpha_i\in \{-1,0,1\}^n$, because for any other $z_h$ we have $ \tilde{q}_j \cap \omega_h \subset \tilde{q}_j\cap \tilde{q}_h = \emptyset$.  Let us estimate the terms in the right hand side of  \eqref{1110171817}. 
We have
\begin{equation}\label{0807171820}
\begin{split}
  \int \limits_{\omega_{j}\cap \, \widetilde{\omega}_i}  \hspace{-0.5em}  |a_{i}-a_{j}|^p\dx  \leq  \int \limits_{\tilde{q}_j \cap \widetilde{\omega}_i} \hspace{-0.5em}  |a_{i}-a_{j}|^p\dx & \leq \mathcal{L}^n\left(\omega_i \right) \|a_{i}-a_{j}\|_{L^\infty(Q_{i}\cap Q_{j})}^p\leq C \, \theta \, k^{-p} \hspace{-1em}\int \limits_{ Q_{i}\cup Q_{j}} \hspace{-1em} |e(u)|^p\dx\\ &\leq C \, \theta \, k^{-p} \int \limits_{ \widetilde{Q}_{j}} |e(u)|^p\dx\,,
\end{split}
\end{equation}
by using \eqref{eq:diffaffinifinale} and \eqref{1005171230}.   Moreover, each of the remaining terms in the right hand side of  \eqref{1110171817} is less than
\begin{equation}\label{1109181810}
\int \limits_{\tilde{q}_j\cap \widetilde{\omega}_i \sm \omega_j} \hspace{-0.5em}  |u-a_{j}|^p \dx \leq c \, \mathcal{L}^n(\omega_{i})^{1/n}  k^{-(p-1)} \int \limits_{Q_{j}} |e(u)|^p\dx \leq C \theta^{1/n} k^{-p} \int \limits_{Q_{j}}  |e(u)|^p\dx\,,
\end{equation}
by employing H\"older inequality as in \eqref{prop3CCFHolder}, and \eqref{1005171230}.

Therefore, since the terms in the sums are at most $3^n-1$, we obtain that
\begin{equation}\label{2405170137}
\|\widetilde{u}_k-a_j\|_{L^p(\tilde{q}_j\cap \omega^k)} \leq C\, \theta^{1/ n  p} k^{-1} \|e(u)\|_{L^p(\widetilde{Q}_j)} \,.
\end{equation}
In preparation to the proof of \eqref{2rough} and \eqref{4rough}, we remark that if $z_j \in \widetilde{G}^k_1$ then
\begin{equation}\label{0807171814}
\|\widetilde{u}_k-a_j\|_{L^p(\tilde{q}_j\cap \omega^k)} \leq C\,  k^{-(1+\frac{1}{2 n  p})} \|e(u)\|_{L^p(\widetilde{Q}_j)} \,,
\end{equation}
namely \eqref{2405170137} holds true for $k^{-\frac{1}{2}}$ in place of $\theta$. Indeed, one employs \eqref{0807171956} for every $i$ in \eqref{0807171820}  and \eqref{1109181810}  ($z_i\in G^k_1$ for any $i$ therein, by definition of $\widetilde{G}^k_1$).

Collecting \eqref{2305172312}, \eqref{2305172326}, \eqref{2305172355}, \eqref{2405170137}, and summing on $j$, we deduce
\begin{equation*}
\|u_k-u\|_{L^p(\Omega \setminus E_k)}\leq C\, k^{-1} \|e(u)\|_{L^p(\Omega)}
\end{equation*}
which gives \eqref{1rough} together with \eqref{1805171033}.
\newline
\\
{\bf Proof of \eqref{4rough}.} As above, let us fix $j\in J$ such that $q_j \subset \Omega \setminus \widetilde{\Omega}_b^k$, and let $\psi$ be as in the statement of the theorem. 
Then
\begin{equation}\label{0107171127}
\int \limits_{q_j\cap \omega^k} \hspace{-0.7em} \psi(|u_k-u|)\dx\leq C_\psi  \hspace{-0.7em} \int\limits_{q_j\cap \omega^k}\hspace{-0.7em}  \psi(|u_k-a_j|)\dx+  C_\psi  \hspace{-0.7em} \int\limits_{q_j\cap \omega^k} \hspace{-0.7em}  \psi(|u-a_j|)\dx\,,
\end{equation}
For the first term in the right hand side above we have
\begin{equation}\label{0107171311}
\begin{split}
\int\limits_{q_j\cap \omega^k}\hspace{-0.7em}  \psi(|u_k-a_j|)\dx &\leq  C_\psi \,\mathcal{L}^n(q_j\cap \omega^k)+  C_\psi \hspace{-0.7em}\int\limits_{q_j\cap \omega^k} \hspace{-0.7em} |u_k-a_j|^p\dx \\&\leq  C_\psi  \, \mathcal{L}^n(q_j\cap \omega^k) + C\,  k^{-p} \int\limits_{\widetilde{Q}_j}|e(u)|^p\dx\,,
\end{split}
\end{equation}
by \eqref{2305172326}, \eqref{2305172355}, and \eqref{2405170137} (that control $\int\limits_{q_j} |u_k-a_j|^p\dx$, see the first inequality in \eqref{2305172355}, and then $\int\limits_{q_j\cap \omega^k} \hspace{-0.7em} |u_k-a_j|^p\dx$).
As for the second term in the right hand side of \eqref{0107171127}, it holds that
\begin{equation*}
\int\limits_{q_j\cap \omega^k} \hspace{-0.7em}\psi(|u-a_j|)\dx \leq  C_\psi  \hspace{-0.7em}\int\limits_{q_j\cap \omega^k} \hspace{-0.7em} \psi(|u|)\dx +  C_\psi  \hspace{-0.7em} \int\limits_{q_j\cap \omega^k} \hspace{-0.7em} \psi(|a_j|)\dx\,,
\end{equation*}
and 
\begin{equation*}
\begin{split}
 \int\limits_{q_j\cap \omega^k} \hspace{-0.7em} \psi(|a_j|)\dx &   \leq  C \frac{\mathcal{L}^n(q_j\cap \omega^k)}{\mathcal{L}^n(q_j)} \int\limits_{q_j}\psi(|a_j|)\dx  \leq C \, \theta \int\limits_{q_j}\psi(|a_j|)\dx  \leq C\, C_\psi\, \theta \int\limits_{q_j} \big(\psi(|u|) + \psi(|u-a_j|)\big)\dx \\&\leq C\,C_\psi\, \theta \bigg(\int\limits_{q_j} \psi(|u|) \dx + \hspace{-0.7em}\int\limits_{q_j\cap \omega^k} \hspace{-0.7em} \psi(|u-a_j|)\dx + C_\psi\,\mathcal{L}^n(q_j\setminus \omega^k)+  C_\psi\, k^{-p} \int\limits_{Q_j}|e(u)|^p\dx\bigg)\,,
\end{split}
\end{equation*}
by \eqref{2305172326}.
Being $\theta$ small, by the two previous inequalities 
we get that
\begin{equation}\label{0107171310}
\begin{split}
\int\limits_{q_j\cap \omega^k} \hspace{-0.7em} \psi(|u-a_j|)\dx\leq  C \bigg(\hspace{-0.7em}\int\limits_{q_j\cap \omega^k} \hspace{-0.7em} \psi(|u|)\dx + \theta \int\limits_{q_j} \psi(|u|) \dx  + \theta\,\mathcal{L}^n(q_j\setminus \omega^k)+  k^{-p} \int\limits_{Q_j}|e(u)|^p\dx\bigg)\,,
\end{split}
\end{equation}
where powers of $C_\psi$ have been absorbed in $C$.
We now collect \eqref{0107171127}, \eqref{0107171311}, \eqref{0107171310}, to get
\begin{equation*}
\begin{split}
\int \limits_{q_j\cap \omega^k} \hspace{-0.7em} \psi(|u_k-u|)\dx &\leq C\, \bigg(\mathcal{L}^n(q_j\cap \omega^k) +  k^{-p} \int\limits_{\widetilde{Q}_j}|e(u)|^p\dx +    \hspace{-0.7em}\int\limits_{q_j\cap \omega^k} \hspace{-0.7em} \psi(|u|)\dx + \theta \int\limits_{q_j} \psi(|u|) \dx  \\ &\quad+ \theta\,\mathcal{L}^n(q_j\setminus \omega^k)\bigg)\,.
\end{split}
\end{equation*}
Again, notice that if $z_j \in \widetilde{G}^k_1$, then the inequality above holds for $k^{-\frac{1}{2}}$ in place of $\theta$ (indeed $\frac{\mathcal{L}^n(q_j\cap \omega^k)}{\mathcal{L}^n(q_j)} \leq C\,k^{-\frac{1}{2}}$ in the estimate before \eqref{0107171310}).

Let us sum over $j \in J$, distinguishing the centers in $\widetilde{G}^k_1$ and the remaining ones, that we may assume in  $\widetilde{G}^k_2$, recalling \eqref{1205171038} and the definition of $u_k$ \eqref{eq:defapprox}. We deduce that
\begin{equation*}
\begin{split}
\int \limits_{E_k} \psi(|u_k-u|)\dx &\leq C \bigg( \mathcal{L}^n(E_k) + k^{-p} \int\limits_{\widetilde{\Omega}}|e(u)|^p\dx  +   \int\limits_{E_k} \psi(|u|)\dx +  \theta \hspace{-0.5em}\int \limits_{\widetilde{\Omega}^k_{g,2}} \hspace{-0.5em}\psi(|u|)\dx \\
&\quad  + k^{-\frac{1}{2}} \int\limits_\Omega \psi(|u|)\dx + k^{-\frac{1}{2}}\, \mathcal{L}^n(\Omega)  + \theta \mathcal{L}^n\Big(\bigcup_{z_j \in \widetilde{G}^k_2} q_{z_j}\Big) \bigg)
\,,
\end{split}
\end{equation*}
By \eqref{1805171033}, \eqref{0907170918}, and since $\psi(|u|)\in L^1(\Omega)$, it follows that
\begin{equation}\label{0107171956}
\lim_{k\to \infty} \int \limits_{E_k} \psi(|u_k-u|)\dx =0\,.
\end{equation}
Eventually, by \eqref{1rough} and \eqref{hppsi}
\begin{equation}\label{0107172009}
\lim_{k\to \infty} \int \limits_{\Omega\setminus E_k} \psi(|u_k-u|)\dx =0\,.
\end{equation}
Indeed, $\psi=\psi_1+\psi_2$ for suitable $0\leq \psi_1\leq M_\psi$, and 
$0 < \psi_2(s)\leq M_\psi |s|^p$, with $M_\psi>0$.
Since $\psi_1\,,\psi_2 \geq 0$, \eqref{1rough}, \eqref{hppsi} imply that $v_k^i:=\chi_{\Omega\setminus E_k} \psi_i(|u_k-u|)$ converges to 0 pointwise for $\mathcal{L}^n$-a.e.\ $x\in \Omega$, for $i=1,2$.
Being $\int_{\Omega\setminus E_k} \psi(|u_k-u|)\dx=\int_\Omega v_k^1 \dx + \int_\Omega v_k^2 \dx$, we deduce \eqref{0107172009} since the two integrals go to 0, the first by Dominated Convergence Theorem and the second by \eqref{1rough}.
\newline
\\
{\bf Proof of \eqref{2rough}.}
First we show that, for $v_j$ as in \eqref{prop3iiCCF16applicata}, 
\begin{equation}\label{1005171231}
\int\limits_{\tilde{q}_{j}}|\widetilde{u}_k-v_{j}|^p\dx\leq C \,\theta^{1/n}\, k^{-p} \int\limits_{\widetilde{Q}_{j}}|e(u)|^p\dx\,, \quad\text{ for  } j\in J\,, 
\end{equation}
and 
\begin{equation}\label{0807171945}
\int\limits_{\tilde{q}_{j}}|\widetilde{u}_k-v_{j}|^p\dx\leq C \,k^{-(p+\frac{1}{2n})} \int\limits_{\widetilde{Q}_{j}}|e(u)|^p\dx\,, \quad\text{ for } j\in J \text{ such that }z_j\in \widetilde{G}^k_1\,, 
\end{equation}
Let us first consider a general $j\in J$. Since $\widetilde{u}_k=u=v_j$ in $\tilde{q}_j\setminus \omega^k$ and $v_j=a_j$ in $\omega_j$, it holds that

\begin{equation*}
\begin{split}
\|\widetilde{u}_k-v_{j}\|_{L^p(\tilde{q}_{j})}= \|\widetilde{u}_k-v_{j}\|_{L^p(\tilde{q}_{j}\cap \omega_k)}&\leq \|\widetilde{u}_k-a_{j}\|_{L^p(\tilde{q}_{j}\cap \omega_k)} + \|v_{j}-a_j\|_{L^p(\tilde{q}_{j}\cap \omega_k)}
\\& =\|\widetilde{u}_k-a_{j}\|_{L^p(\tilde{q}_{j}\cap \omega_k)} + \|u-a_j\|_{L^p(\tilde{q}_{j}\cap \omega_k \sm \omega_j)}\,.
\end{split}
\end{equation*}
The first term in the right hand side is estimated by \eqref{2405170137}. As for the other term, we have that
\begin{equation}
\|u-a_j\|_{L^p(\tilde{q}_j\cap \omega^k\setminus \omega_j)} =\sum_{i \neq j} \|u-a_j\|_{L^p(\tilde{q}_j\cap \widetilde{\omega}_i \sm \omega_j) }\,,
\end{equation}
being the $\widetilde{\omega}_i$ pairwise disjoint. Now, the terms in the sum are at most $3^n-1$ (see also before \eqref{1110171817}), each of which bounded by
\begin{equation*}
\|u-a_{j}\|_{L^p(\tilde{q}_j\cap \widetilde{\omega}_i \sm \omega_j)} \leq C\, \theta^{1/ n  p} k^{-1} \|e(u)\|_{L^p(Q_j)}\,,
\end{equation*}
due to \eqref{1109181810}.

Thus \eqref{1005171231} follows.
On the other hand, if $j\in J$ such that $z_j\in \widetilde{G}^k_1$, we deduce \eqref{0807171945} arguing as before, employing  \eqref{0807171814} instead of \eqref{2405170137}, and \eqref{0807171956} instead of \eqref{1005171230} in \eqref{1109181810},  to get 
\begin{equation*}
\|u-a_{j}\|_{L^p(\tilde{q}_j\cap \widetilde{\omega}_i \sm \omega_j)} \leq C\,  k^{-(1+\frac{1}{2np})} \|e(u)\|_{L^p(Q_j)}\,.
\end{equation*}

We now have the inequality 
\begin{equation}\label{1105172002}
(a+b)^p\leq (1+ \varrho)\, a^p +   \frac{ C_p }{\varrho^{p-1}}  b^p
\end{equation}
for  $C_p>0$ depending only on $p$,  any $\varrho \in (0,1)$, and any positive numbers $a$, $b$.  This follows since $(a+b)^p=a^p(1+\frac{b}{a})^p\leq a^p(1+\varrho + c_\varrho (\frac{b}{a})^p)$, for
\[
c_\varrho:=\max_{\alpha > (1+\varrho)^{\frac{1}{p}}-1} \frac{(1+\alpha)^p-(1+\varrho)}{\alpha^p}\,,
\]
and it is not difficult to see that $c_\varrho < \frac{C_p}{\varrho^{p-1}}$, the maximum being attained for $\alpha^*=(1+\varrho)^{\frac{1}{p-1}}-1$.

Fix $j\in J$. By \eqref{1105172002} with $\varrho^{p-1}=\theta^{\frac{1}{2n}}$ we get
\begin{equation}\label{1105172022}
\begin{split}
\int\limits_{q_j} |e(\widetilde{u}_k\ast \varphi_k)|^p\dx \leq \left(1+ \theta^{\frac{1}{2n(p-1)}}\right) \int\limits_{q_j} |e(v_j\ast \varphi_k)|^p \dx  +  C_p \,\theta^{-\frac{1}{2n}} \int\limits_{q_j} |e(\widetilde{u}_k-v_j)\ast \varphi_k|^p\dx \,.
\end{split}
\end{equation}
By \eqref{1005171231} it follows that
\begin{equation}\label{1105172025}
\begin{split}
 C_p \,\theta^{-\frac{1}{2n}} \int\limits_{q_j} |e(\widetilde{u}_k-v_j)\ast \varphi_k|^p\dx  \leq C \theta^{-\frac{1}{2n}} \, k^p \int\limits_{\tilde{q}_j} |\widetilde{u}_k-v_j|^p\dx  \leq C \theta^{\frac{1}{2n}} \int\limits_{\widetilde{Q}_j}|e(u)|^p \dx\,,
\end{split}
\end{equation}
while by \eqref{prop3iiCCF16applicata} and \eqref{1105172002} (for $\varrho^{p-1}=\theta^{\frac{q}{2}}$)
\begin{equation*}
\int\limits_{q_j} |e(v_j\ast \varphi_k)|^p \dx \leq \left(1+ \, \theta^{\frac{q}{2(p-1)}} \right) \int\limits_{q_j} |e(u)\ast \varphi_k|^p\dx+C\,\theta^{\frac{q}{2}} \int\limits_{Q_j}|e(u)|^p\dx\,.
\end{equation*}
Inserting into \eqref{1105172022}, this gives that
\begin{equation}\label{1506171757}
\begin{split}
\int\limits_{q_j} |e(\widetilde{u}_k\ast \varphi_k)|^p\dx &\leq \int\limits_{q_j} |e(u)\ast \varphi_k|^p\dx + C \, \theta^{q'} \Bigg( \int\limits_{q_j} |e(u)\ast \varphi_k|^p\dx + \int\limits_{\widetilde{Q}_j}|e(u)|^p \dx \Bigg) \\ &\leq \int\limits_{q_j} |e(u)\ast \varphi_k|^p\dx + C \, \theta^{q'} \int\limits_{\widetilde{Q}_j}|e(u)|^p \dx \,,
\end{split}
\end{equation}
with $q':=\min\{\frac{q}{2}, \frac{1}{2n},  \frac{q}{2(p-1)}, \frac{1}{2n(p-1)} \}$.
If $j\in J$ is such that $z_j\in \widetilde{G}^k_1$, then \eqref{1506171757} holds true for $k^{-\frac{1}{2}}$ in place of $\theta$, namely
\begin{equation}
\int\limits_{q_j} |e(\widetilde{u}_k\ast \varphi_k)|^p\dx \leq \int\limits_{q_j} |e(u)\ast \varphi_k|^p\dx + C \, k^{-\frac{q'}{2}} \int\limits_{\widetilde{Q}_j}|e(u)|^p \dx \,,
\end{equation}
because we can argue as before, with $\varrho^{p-1}$ equal to $k^{-\frac{1}{4n}}$ and $k^{-\frac{q}{4}}$ in \eqref{1105172002}, and \eqref{0807171945}, \eqref{0807171957} instead of \eqref{1005171231}, \eqref{prop3iiCCF16applicata}, respectively.
Summing for $j \in J$  and recalling the definition of $u_k$ we obtain \eqref{2rough}. Notice that one has to distinguish the contributions for the nodes in $\widetilde{G}^k_1$ and in $\widetilde{G}^k_2$, and to use that
\begin{equation*}
\lim_{k\to \infty}\int \limits_{\widetilde{\Omega}^k_{g,2}} |e(u)|^p\dx =0\,,
\end{equation*}
by \eqref{0907170918} and since $e(u)$ is in $L^p$.
This concludes the proof.
\end{proof}

\section{Proof of the main result}\label{Sec3}

In this section we prove the main approximation result for any $u \in GSBD^p(\Omega)$, 
through more regular functions $u_k$ converging in measure to $u$.
The symmetric difference between the jump sets, $J_{u_k}\triangle J_u$, tends to 0 in $\hn$-measure,
the deformation $e(u)$ is approximated in the strong $L^p$ topology, and there is also convergence for truncation of the traces on $J_u\cup J_{u_k}$ and on the reduced boundary of the domain $\Omega$, which is assumed to be only a set with finite perimeter.

We apply the rough version of the result, that we have shown in Section~\ref{Sec2}, to any (neighbourhood of) set of a suitable partition on $\Omega$, such that the measure of the jump set of $u$ is small in any subset.

A fundamental difference with respect to \cite{Cha04, Iur14, CFI17Density}, that employ also an intermediate rough estimate, is that here we do not use partitions of the unity neither to extend the original function in suitable neighbourhoods of the subsets of the partition nor to glue the approximating functions constructed in any subset. This allows us to avoid any assumption on the integrability of $u$.

\begin{proof}[Proof of Theorem~\ref{teo:main}]
We split the proof into three parts. 
First we approximate in a suitable way $J_u$ (and $\partial \Omega$), in the same spirit of the beginning of the proof of \cite[Theorem~2]{Cha04}, with balls replaced by hypercubes (see also \cite[Lemma~4.2]{CorToa99}). Then we get a finite family of cubes $Q_j$, whose union contains almost all $J_u$, each of which splitted in two parts $Q_j^1$, $Q_j^2$ by the jump set. This gives us a partition of $\Omega$ up to a $\mathcal{L}^n$-negligible set (see \eqref{eqs:2405171202} and \eqref{eq:defb0}).

\medskip

At this stage, the strategy followed in \cite{Cha04} and \cite{Iur14} is to fatten a little bit every set of the covering, defining properly a function in the fattened domain in such a way that the energy does not increase much, and to apply Theorem~\ref{teo:rough} for each subset.
By the way, we have to be very careful both in defining the extension functions and in linking the extended domains. Indeed, for instance we cannot simply glue any approximating functions defined on each enlarged set by a suitable partition of the unity subordinated to the covering, as in \cite[Theorem~2]{Cha04} by the analogous of \cite[Lemma~3.1]{Cha04}. The reason is that, differently from \cite{Cha04}, we do not know a priori the strong convergence in $L^p$ in every subdomain, since now we do not assume $u\in L^p$.
For the same reason, even to extend the function in an enlarged domain, we cannot partition the boundary, make small outer translations and glue by a partition of the unity, as in \cite{Cha04}. 
Consequently, we follow a different argument. First, we use the fact that $\partial Q_j^1 \cap \partial Q_j^2$
is almost flat (this is the intersection of the main part of $J_u$ with $Q_j$), to apply Lemma~\ref{le:Nitsche}, an extension result inspired by Nitsche \cite{Nie81} (see also \cite{FriSol16}), on both sides of any cube. In such a way, we extend the original function in the direction of the outer normal to each side of $J_u$.
Then, we take the function $u$ itself as an extension outside $\partial Q_j$ and apply Theorem~\ref{teo:rough} for each subdomain; the extensions corresponding to $Q_j$ and to the complement of $Q_j$ have the same value on $\partial Q_j$, because they are obtained from $u$ in the same way, in particular by taking convolutions with the same kernel.

\medskip

In the final part the approximating functions on $\Omega$ are introduced, and we verify the approximation properties.
The remarkable fact is that we are allowed to just sum the ``local'' approximating functions, restricted to the original subdomains. Indeed, no additional jump is created on the relative boundaries between any square $Q_j$ and $B_0$, while the relative boundary between $Q_j^1$ and $Q_j^2$ correspond to a jump of the original displacement $u$, so that here we are allowed to still have jump. A minor point is to set the approximating function as 0 in a small neighbourhood of the intersection between $\partial Q_j$ and the small strip that contains the main jump in $Q_j$, in which the function is reflected.
\newline
\\
{\bf Approximation of $J_u$ and $\partial \Omega$.}
Since $J_u$ is $\left(\hn, n{-}1\right)$-rectifiable, there exists a sequence $\Gamma_i$ of $C^1$ 
hypersurfaces  such that $\hn(J_u\setminus \bigcup_{i=1}^{\infty} \Gamma_i)=0$. 
For each $i\geq 1$, let
\begin{equation*}
S_i:=\Bigg\{x\in J_u\cap \Gamma_i\setminus \bigcup_{j<i}S_j\colon \lim_{\varrho \to 0}\frac{\hn(J_u\cap \ol Q(x,\varrho))}{(2 \varrho)^{n-1}}= \lim_{\varrho\to 0}\frac{\hn(J_u\cap \Gamma_i\cap \ol Q(x,\varrho))}{(2 \varrho)^{n-1}}=1\Bigg\}\,,
\end{equation*} 
where $\ol Q(x,\varrho)$ is the closed cube with center $x$, sidelength $2\varrho$, and one face normal to $\nu(x)$, the normal to $\Gamma_i$ at $x$.
Thus $\hn(J_u\setminus \bigcup_{i=1}^\infty S_i)=0$ and for every $x\in S_i$
\begin{equation*}
\lim_{\varrho\to 0^+}\frac{\hn(J_u\cap \ol Q(x,\varrho)\setminus \Gamma_i)}{(2\varrho)^{n-1}}=0\,.
\end{equation*}
Let us fix $\varepsilon>0$. 
Then for every $x\in S_i$ there exists $\ol \varrho(x)$ such that for $0<\varrho<\ol \varrho(x)$
\begin{equation}\label{0106172209}
\begin{split}
 \hn\big((J_u&\triangle \Gamma_{i})\,\cap \, \ol{Q}(x,\varrho)\big)< \varepsilon (2\varrho)^{n-1}< \, \frac{\varepsilon}{1-\varepsilon}  \hn(J_u\cap \ol{Q}(x,\varrho))\,, \\
 \ol{Q}(x,\varrho)\cap \Gamma_{i} &\text{ lies (in the open region) between the hyperplanes }T_x\pm (\varepsilon \varrho)\nu(x) \,,
\end{split}
\end{equation}
where $T_x$ is the hyperplane normal to $\nu(x)$ and passing through $x$, 
\begin{equation}\label{1409181117}
\hn(J_u\cap \partial \ol{Q}(x,\varrho))=0\,,
\end{equation}
and $\Gamma_i$ is a graph with respect to the direction $\nu(x)$ of Lipschitz constant less than $\varepsilon$.

The family $\mathcal{V}:=\{\ol Q(x,\varrho)\colon x \in J_u,\,0<\varrho<\ol\varrho(x)\}$ is a Vitali class of closed sets for $J_u$. Then, by \cite[Theorem~1.10]{Fal85} for $s=n{-}1$, there exists a disjoint sequence $\ol{Q_j}=\ol Q(x_j,\varrho_j)\subset \mathcal{V}$ such that $\hn\big(J_u\setminus \bigcup_{j=1}^{\infty} \ol{Q_j}\big)=0$. In particular, one face of $\ol{Q_j}$ is normal to $\nu_u(x_j)$, the normal to $J_u$ at $x_j$, 
for each $j$ there exists $i_j$ for which $\Gamma_{i_j}$ separates $Q_j$ in exactly two components $Q_j^1$ and $Q_j^2$ (each of the two is an open Lipschitz domain), and, for a suitable $ \overline{\jmath} \in \mathbb{N}$, we have 
 \begin{subequations}\label{eqs:2405171202}
\begin{align} 
&\hn\Big(J_u \setminus \bigcup_{j=1}^{\overline{\jmath}}Q_j\Big)<\varepsilon\,, \label{1305171147}\\
 \hn\big((J_u\triangle \Gamma_{i_j})\,\cap &\, \ol{Q_j}\big)< \varepsilon (2\varrho_j)^{n-1}< \, \frac{\varepsilon}{1-\varepsilon}  \hn(J_u\cap \ol{Q_j})\,, \label{1305171150}\\
Q_j\cap \Gamma_{i_j} \subset R_j:=\Big\{ x_j+ \sum_{i=1}^{n-1} & y_i\,b_{j,i} + y_n\, \nu_u(x_j) \colon  y_i \in (-\varrho_j,  \varrho_j),\,  y_n \in (-\varepsilon \varrho_j, + \varepsilon \varrho_j)\Big\} \,, \label{2105171916} 
 \end{align}
 \end{subequations}
 where 
 \[
 (b_{j,i})_{i=1}^{n-1}\quad\text{ is an orthonormal basis of }\nu_u(x_j)^\perp\,.
 \]
  Moreover, by \eqref{1409181117} we have
  \begin{equation}\label{1409181118}
 \hn(J_u \cap \partial Q_j)=0\,,
\end{equation}   
and we may assume that $\ol{Q_j}\subset \Omega$ for $j=1, \dots, \overline{\jmath}$.

We can argue similarly for $\partial \Omega$ in place of $J_u$, to find a finite set of cubes $(Q_{h,0})_{h=1}^{\ol h}$ of centers $x_{h,0}\in \partial\Omega$ and sidelength $2\varrho_{h,0}$,  with one face normal to $\nu_\Omega(x_{h,0})$ (the outer normal to $\Omega$ at $x_{h,0}$), pairwise disjoint and with empty intersection with any $\ol Q_j$, and $C^1$ hypersurfaces $(\Gamma_{h,0})_{h=1}^{\ol h}$ with $x_{h,0}\in \Gamma_{h,0}$, such that 
\begin{subequations}\label{eqs:0707172255}
\begin{align}
\hn\Big(\partial & \Omega\setminus  \bigcup_{h=1}^{\ol h} Q_{h,0}\Big) <\varepsilon\,,\label{2105171956}\\
 \hn\big((\partial\Omega\triangle \Gamma_{h,0})\cap \ol{Q}_{h,0} \big)&  <\varepsilon (2\varrho_{h,0})^{n-1}  < \frac{\varepsilon}{1-\varepsilon}\hn(\partial\Omega\cap \ol{Q}_{h,0})\,,  \label{2505171242}\\
  Q_{h,0} \cap \Gamma_{h,0}  \subset R_{h,0}:=\Big\{x_{h,0}+\sum_{i=1}^{n-1} y_i\, b_{h,i}^0 & +y_n\, \nu_h^0 \colon y_i\in (-\varrho_{h,0},\varrho_{h,0}),\, y_n \in (-\varepsilon \varrho_{h,0}, +\varepsilon \varrho_{h,0}) \Big\} \,,\label{2505171243}
\end{align}
\end{subequations}
where 
$\nu_{h,0}=-\nu_{\Omega}(x_{h,0})$ is the generalised outer normal to $\Omega$ at $x_{h,0}$ and
\[
(b_{h,i}^0)_{i=1}^{n-1}\quad\text{ is an orthonormal basis of }(\nu_{h,0})^\perp\,.
\]
 We remark that
 we may assume that conditions \eqref{eqs:2405171202} and \eqref{eqs:0707172255} hold also for 
 the enlarged cubes
 \begin{equation}\label{1409181348}
 \widetilde{Q}_j:= \ol Q_j + (-t,t)^n\,,\qquad \widetilde{Q}_{h,0}:= \ol Q_{h,0} + (-t,t)^n\,,
 \end{equation}
 for some $t$ much smaller than $\varepsilon$ and $\min_{j,h}\{\varrho_j, \varrho_h^0\}$ (we will consider below a parameter $k$ chosen such that $k^{-1}$ is much smaller than $t$). 
Let
\begin{equation}\label{eq:defb0}
B_0:= \Omega\setminus \Big(\bigcup_{j=1}^{\overline{\jmath}} \ol{Q_j} \cup \bigcup_{h=1}^{\ol h} \ol Q_{h,0}\Big)\,.
\end{equation}
Notice that, by the assumptions on $\Omega$, we have that the extension of $u$ with the value 0 outside $\Omega$ is $GSBD^p(\widetilde{\Omega})$ for every open set $\widetilde{\Omega}$ in which $\Omega$ is compactly contained. 
An equivalent point of view would be to include
(a part of) $\partial \Omega$ in the jump set of such an extension of $u$. We employ this extension, not relabeled, when we deal with cubes $Q_{h_0}$, that are not contained in $\Omega$. 
\begin{figure}[h]\label{fig}
\vspace{-9em}
\hspace{-2em}
\begin{minipage}[c]{0.49\linewidth}
\includegraphics[width=\linewidth]{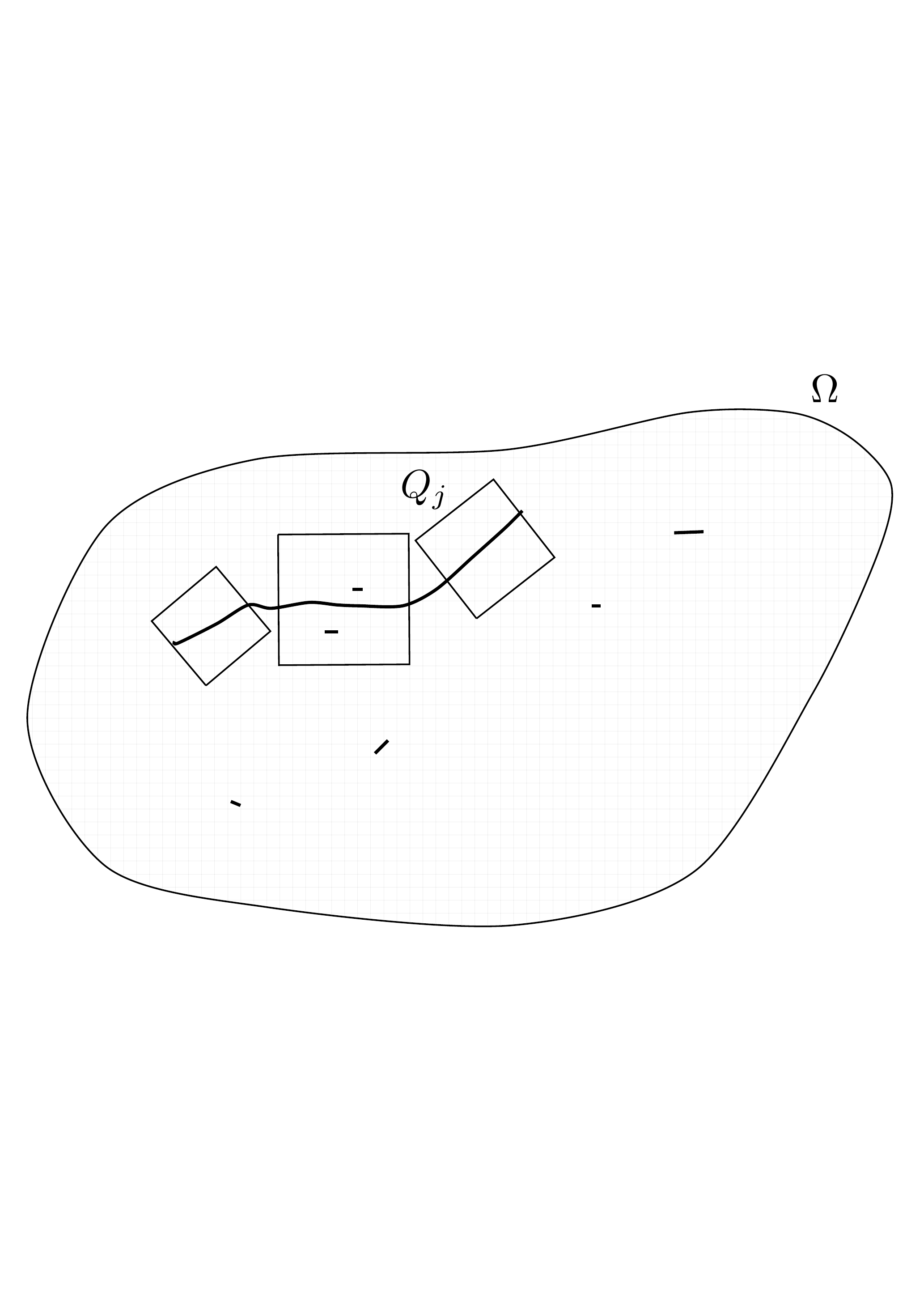}
\end{minipage}
\hfill
\begin{minipage}[c]{0.53\linewidth}
\includegraphics[width=\linewidth]{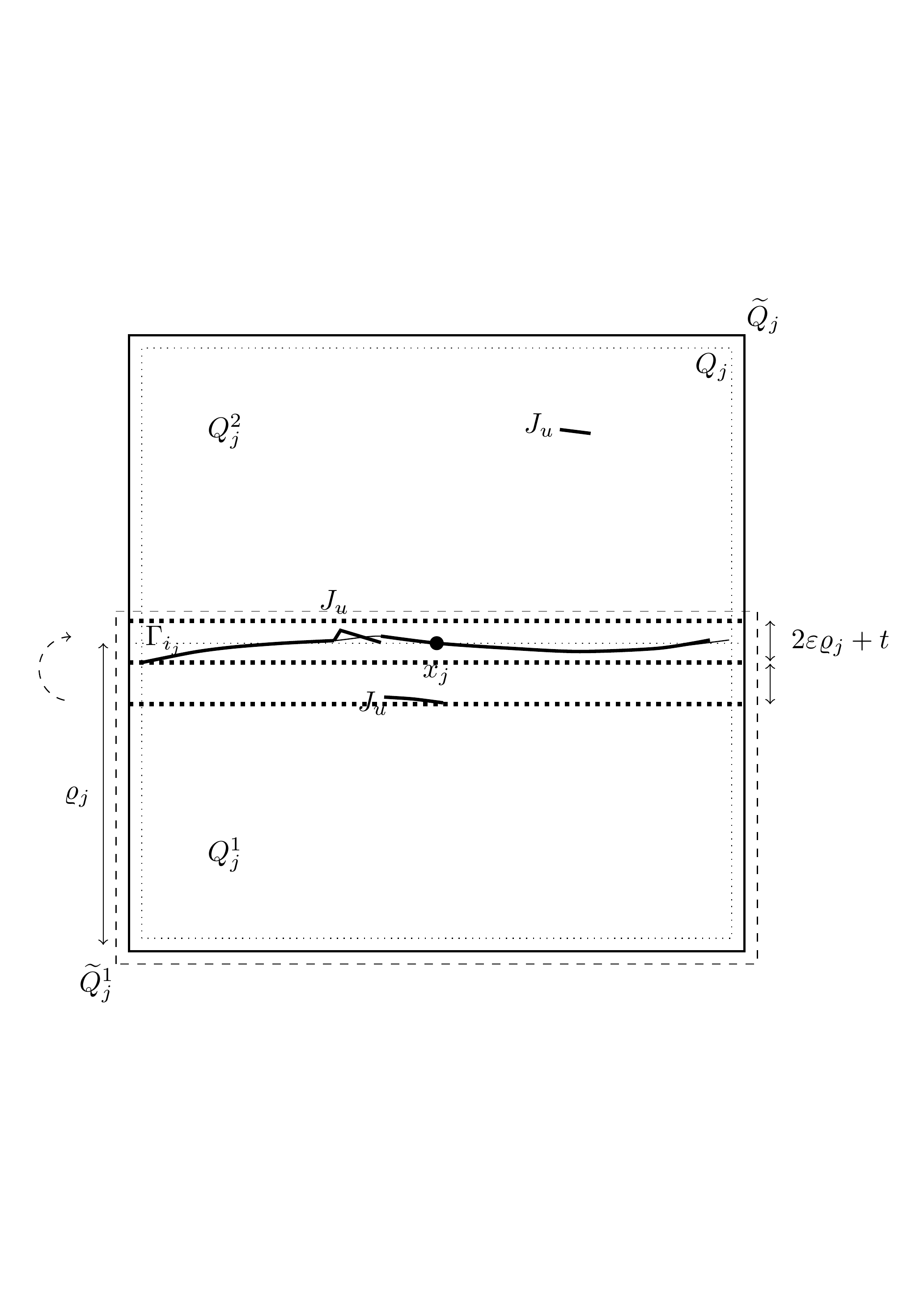}
\end{minipage}
\vspace{-8em}
\caption*{In the first figure, the cubes $Q_j$ covering almost all $J_u$ (see the short curves in $J_u$ in remaining part) and the small cubes $q_z$, employed for the rough approximation (see Theorem~\ref{teo:rough}). In the second one, a zoom on a cube $Q_j$ (dotted), of sidelength $2\varrho_j$, divided by $\Gamma_{i_j}$ (thin, while $J_u$ is thick) in two parts $Q_j^1$, $Q_j^2$. In the neighbourhood $\widetilde{Q}_j$, on the bottom (almost) half $\widetilde{Q}_j^1$ the function $u_j^1$ is obtained by the extension argument \emph{à la} Nitsche in the rectangle $\widehat{R}_j^1=\widetilde{R}_j^1 \cup (\widetilde{R}_j^1)'$ (dashed, in the middle), and by $u$ itself in the remaining part. Notice that $\Gamma_{i_j} \subset (\widetilde{R}_j^1)'$ and that $\Gamma_{i_j}\triangle J_u$ is small.}
\end{figure}
\\
{\bf Definition and properties of the approximating functions in subdomains.}
Let us fix $j\in \{1,\dots, \overline{\jmath}\}$, corresponding to a cube $Q_j$. Consider, for a 
fixed $t>0$ as above, the ``enlarged (almost) half cubes''
\begin{align*}
\widetilde{Q}_j^1:=\Big\{x_j + \sum_{i=1}^{n-1} y_i \, b_{j,i}  + y_n \, \nu_u(x_j) \colon y_i\in (-\varrho_j-t,+\varrho_j+t),\, y_n \in (-\varrho_j-t, \varepsilon \varrho_j+t)\Big\} \,,\\
\widetilde{Q}_j^2:=\Big\{x_j + \sum_{i=1}^{n-1} y_i \, b_{j,i}  + y_n \, \nu_u(x_j) \colon y_i\in (-\varrho_j-t,+\varrho_j+t),\, y_n \in ( -\varepsilon \varrho_j-t, \varrho_j+t)\Big\}\,,
\end{align*}
such that $\widetilde{Q}_j=\widetilde{Q}_j^1+\widetilde{Q}_j^2$, the open rectangles 
\begin{align*}
\widetilde{R}_j^1&:=\Big\{x_j + \sum_{i=1}^{n-1} y_i \, b_{j,i} + y_n \, \nu_u(x_j) \colon y_i\in (-\varrho_j - t,+\varrho_j + t),\, y_n \in (-3\varepsilon \varrho_j-t, -\varepsilon \varrho_j)\Big\}\,,\\
\widetilde{R}_j^2&:=\Big\{x_j + \sum_{i=1}^{n-1} y_i \, b_{j,i} + y_n \, \nu_u(x_j) \colon y_i\in (-\varrho_j - t,+\varrho_j + t),\, y_n \in (\varepsilon \varrho_j, 3\varepsilon \varrho_j+t)\Big\}\,,
\end{align*}
and their reflections with respect to one of their faces
\begin{align*}
(\widetilde{R}_j^1)'&:=\Big\{x_j + \sum_{i=1}^{n-1} y_i \, b_{j,i}  + y_n \, \nu_u(x_j) \colon y_i\in (-\varrho_j - t,+\varrho_j + t),\, y_n \in ( -\varepsilon \varrho_j, \varepsilon \varrho_j+ t)\Big\}\,,\\
(\widetilde{R}_j^2)'&:=\Big\{x_j + \sum_{i=1}^{n-1} y_i \, b_{j,i}  + y_n \, \nu_u(x_j) \colon y_i\in (-\varrho_j - t,+\varrho_j + t),\, y_n \in (-\varepsilon \varrho_j-t,\varepsilon \varrho_j)\Big\}\,.
\end{align*}
(notice that the labels for these sets match with the ones for the cubes $Q_j$).

Let also $\widehat{R}_j^l$ be the union of $\widetilde{R}_j^l$, $(\widetilde{R}_j^l)'$, and their common face, for $l=1,2$.
We have that
\begin{equation}\label{1409181259}
\mathcal{L}^n(\widetilde{R}_j^l)=\mathcal{L}^n\big((\widetilde{R}_j^l)'\big)=(2\varrho_j+2t)^{n{-}1} (2\varepsilon \varrho_j + t)= \varepsilon \, 2^n  \varrho_j^n + O(t)\,,
\end{equation}
where $\lim_{t\to 0} O(t)=0$. Moreover, we define
\begin{equation}\label{1409181028}
\widetilde{R}_j:= \widetilde{R}_j^1 \cup \widetilde{R}_j^2\,. 
\end{equation}

By Lemma~\ref{le:Nitsche}, we may extend the restrictions of $u$ to $\widetilde{R}_j^1$ and $\widetilde{R}_j^2$ by two functions $\widehat{u}_{j}^1\in GSBD^p(\widehat{R}_j^1)$ and $\widehat{u}_{j}^2\in GSBD^p(\widehat{R}_j^2)$ such that for $l=1,2$
\begin{subequations}\label{eqs:2405171953}
\begin{align}
\int\limits_{\widehat{R}_j^l} |e(\widehat{u}_{j}^l)|^p \dx &\leq c \int\limits_{\widetilde{R}_j^l} |e(u)|^p\dx\,,\label{2405172005}\\
\hn(J_{\widehat{u}_{j}^l}\cap \widehat{R}_j^l)&\leq c \, \hn(J_u\cap \widetilde{R}_j^l)\,,\label{2405172007}
\end{align}
where $c>0$ depends only on $n$ and $p$.
Recalling the definition of reflection in Lemma~\ref{le:Nitsche}, it is immediate to see that if $\psi(|u|)\in L^1(\Omega)$, for $\psi$ as in the statement of the theorem, then
\begin{equation}\label{1007170916}
\int \limits_{\widehat{R}_j^l} \psi(|\widehat{u}_{j}^l|) \dx \leq c \int\limits_{\widetilde{R}_j^l} \psi(|u|) \dx\,.
\end{equation}
\end{subequations}

We then introduce, for $l=1,2$, the functions
\begin{equation}\label{1007170937}
u_{j}^l:= u \, \chi_{\widetilde{Q}_j^l \setminus (\widetilde{R}_j^l)'}+ \widehat{u}_j^l \, \chi_{(\widetilde{R}_j^l)'}\,.
\end{equation}

By \eqref{eqs:2405171953}, it holds that
\begin{subequations}\label{eqs:2505171144}
\begin{align}
\mathcal{L}^n(\{u_{j}^l\neq u\} \cap \widetilde{Q}_j^l)& \leq \mathcal{L}^n\big((\widetilde{R}_j^l)'\big) \leq \varepsilon \, 2^n \varrho_j^n + O(t)\,,\label{2505171147}\\
\int\limits_{\widetilde{Q}_j^l } |e(u_{j}^l)|^p\dx &\leq \int\limits_{\widetilde{Q}_j^l \sm \widehat{R}_j^l} |e(u)|^p \dx + c \int\limits_{\widetilde{R}_j^l} |e(u)|^p\dx\,,\label{2505171148}\\
\hn(J_{u_{j}^l} \cap \widetilde{Q}_j^l) &\leq \hn(J_u\cap \widetilde{Q}_j^l \sm \widehat{R}_j^l)+ c \,\hn(J_u\cap \widetilde{R}_j^l) \,.\label{2505171149}
\end{align}
\end{subequations}
We make the same construction starting from the cubes $Q_{h,0}$ in place of $Q_j$, to get $\widetilde{Q}_{h,0}^l$, $\widetilde{R}_{h,0}^l$, $\widetilde{R}_{h,0}$ $(\widetilde{R}_{h,0}^l)'$, $\widehat{R}_{h,0}^l$, $\widehat{u}_{h,0}^l$, $u_{h,0}^l$ in place of $\widetilde{Q}_j^l$, $\widetilde{R}_j^l$, $\widetilde{R}_j$ $(\widetilde{R}_j^l)'$, $\widehat{R}_j^l$, $\widehat{u}_j^l$, $u_j^l$, for $l=1,2$. In this way, \eqref{eqs:2505171144} hold with the corresponding terms for the cubes $Q_{h,0}$. Notice that here we start from the extension of $u$ with value 0 outside $\Omega$: so we have to include in the analogue of \eqref{2505171149} also the contribution due to $\partial \Omega$, and then \eqref{2505171149} reads in this case as
\begin{equation}\label{1209181203}
\hn(J_{u_{h,0}^l} \cap \widetilde{Q}_{h,0}^l) \leq \hn\big((J_u \cup \partial\Omega) \cap \widetilde{Q}_{h,0}^l \sm \widehat{R}_{h,0}^l\big)+ c \,\hn\big((J_u \cup \partial\Omega)\cap \widetilde{R}_{h,0}^l\big) \,.
\end{equation}

As for $B_0$, we set
\[
\widetilde{B}_0:=B_0+B(0,t)\,,
\]
 and consider the function $u \, \chi_{\widetilde{B}_0}$, where $u$ is set equal to 0 outside $\Omega$.

Let us introduce for any $j$ and $h$, and $l=1,2$, suitable rectangles $(Q_j^l)'$ and $(Q_{h,0}^l)'$ such that
\[
Q_j^l \subset \subset (Q_j^l)' \subset \subset \widetilde{Q}_j^l\,,\qquad Q_{h,0}^l \subset \subset (Q_{h,0}^l)' \subset \subset \widetilde{Q}_{h,0}^l\,,
\]
and apply Theorem~\ref{teo:rough}  in correspondence to the functions $u_j^1$, $u_j^2$, $u_{h,0}^1$, $u_{h,0}^2$, $u \chi_{\widetilde{B}_0}$, the sets $(Q_j^1)'$, $(Q_j^2)'$, $(Q_{h,0}^1)'$, $(Q_{h,0}^2)'$, $B_0$ and their neighbourhoods, denoted with a tilde.
More precisely, for $j=1,\dots, \ol\jmath$ and $l=1,2$ we define $(\widetilde{u}_k)_j^l$ as in \eqref{eq:defappr1}, starting from the bad and good cubes, with sidelength of order $k^{-1}$, for the reference sets $(Q_j^l)'$, $\widetilde{Q}_j^l$,
and set
\begin{equation}\label{1209181228}
(u_k)_j^l:= \big((\widetilde{u}_k)_j^l \ast \varphi_k \big) \, \chi_{(Q_j^l)' \sm (\widetilde{Q}_j^l)_b^k}\,,
\end{equation}
where $\varphi_k$ is the mollifier as in Theorem~\ref{teo:rough} and $(\widetilde{Q}_j^l)_b^k$ is the union of bad cubes for $\widetilde{Q}_j^l$ (see the definition of $\widetilde{\Omega}_b^k$ in Theorem~\ref{teo:rough}). Moreover, we have the exceptional sets $(E_k)_j^l$, defined as in \eqref{1209181245}. 
We argue in the very same way for the other sets $(Q_{h,0}^1)'$, $(Q_{h,0}^2)'$, $B_0$, to get the functions $(u_k)_{h,0}^1$, $(u_k)_{h,0}^2$, $(u_k)_0$, and the exceptional sets $(E_k)_{h,0}^1$, $(E_k)_{h,0}^2$, $(E_k)_0$. 

We remark that the construction is the same regardless of the starting subdomain, since the bad and good nodes belong to $(2k^{-1}) \Z^n$, which is fixed for any given $k$.

By Theorem~\ref{teo:rough} we then obtain that
\begin{subequations}\label{eqs:1505171913}
\begin{align}
 \lim_{k\to\infty}\mathcal{L}^n((E_k)_j^l)=\lim_{k\to \infty} & \int \limits_{Q_j^l\setminus (E_k)_j^l } \hspace{-1em}|(u_k)_j^l -u_j^l|^p \dx = 0\,, \label{1roughmain}\\
 \limsup_{k\to \infty}  \int \limits_{Q_j^l} |e((u_k)_j^l)|^p \dx  & \leq  \int \limits_{\widetilde{Q}_j^l} |e(u_j^l)|^p\dx \,, \label{2roughmain}\\
\hn\big(J_{(u_k)_j^l}\cap (Q_j^l)'\big) & \leq C \,\theta^{-1} \hn\big(J_{u_j^l} \cap \widetilde{Q}_j^l)\,,\label{3roughmain}\\
\lim_{k\to \infty} \int \limits_{Q_j^l} \psi(|(u_k)_j^l-u_j^l|)\dx&=  0\,,\quad\text{ if }\psi(|u|)\in L^1(\Omega)\,,\label{4roughmain}
\end{align}
\end{subequations}
and the same for $(u_k)_{h,0}^l$, $(u_k)_0$, and $(E_k)_{h,0}^l$, $(E_k)_0$ in place of $(u_k)_j^l$ and $(E_k)_j^l$.
\newline
\\
{\bf The approximating functions.}
We define

\begin{equation*}\label{eq:defapproxmain}
u_k:= \sum_{j=1}^{\overline{\jmath}} \big((u_k)_j^1\, \chi_{Q_j^1} + (u_k)_j^2\, \chi_{Q_j^2}\big) + \sum_{h=1}^{\ol h} \big((u_k)_{h,0}^1\, \chi_{Q_{h,0}^1 \cap \Omega} + (u_k)_{h,0}^2\, \chi_{Q_{h,0}^2\cap \Omega } \big)  + (u_k)_0 \, \chi_{B_0}\,.
\end{equation*}

We are going to prove the desired approximation properties for the sequence $u_k$. It is immediate that $u_k\in SBV^p(\Omega;\Rn)\cap L^\infty(\Omega; \Rn)$. 
\newline
\\
{\bf Proof of \eqref{1main}, \eqref{2main}, \eqref{3main}.}
In order to describe $J_{u_k}$, notice that for $l=1,2$
\begin{equation}\label{2505171041}
J_{(u_k)_j^l}\cap \partial Q_j \setminus F_j = J_{u_k} \cap \partial Q_j \setminus F_j\,,
\end{equation}
where  (recall \eqref{1409181028})
\begin{equation*}
F_j:= (\partial Q_j \cap   \widetilde{R}_j) + B(0, 10 \sqrt{n} k^{-1})\,.
\end{equation*}
Indeed, notice that the values of $(\widetilde{u}_k)_j^l$, defined as in \eqref{eq:defappr1}, are determined in any cube $\Qz$ (recall \eqref{2105171916}) by the values of $u_j^l$ in $\Qz$. Moreover, $(u_k)_j^l(x)=0$ if $x$ is in the union of the bad cubes $(\widetilde{Q}_j^l)_b^k$, while if $x \notin (\widetilde{Q}_j^l)_b^k$, then $(u_k)_j^l(x)$ depends on $(\widetilde{u}_k)_j^l$ in $B(x, k^{-1})$. The same holds also for $(\widetilde{u}_k)_0$ and $(u_k)_0$.
Now, if $x\in \partial Q_j^l \setminus F_j$, then for any cube $\Qz$ such that $B(x, 2 k^{-1}) \cap \Qz \neq \emptyset$ we have that
$u_j^l=u_0=u$ in $Q_z^k$, and therefore $(u_k)_j^l=(u_k)_0$ in a neighbourhood of $x$, so that $J_{u_k}(x)=0$, unless $x \in J_{(u_k)_j^l} \cap J_{(u_k)_0}$. Thus \eqref{2505171041} is proven. We may reproduce this argument for the cubes $Q_{h,0}$, to get
\begin{equation*}
J_{(u_k)_{h,0}^l} \cap \Omega \cap \partial Q_{h,0} \sm F_{h,0}= J_{u_k} \cap \Omega \cap \partial Q_{h,0} \sm F_{h,0}\,,\qquad F_{h,0}:= (\partial Q_{h,0}  \cap \widetilde{R}_{h,0}) + B(0, 10 \sqrt{n} k^{-1})\,.
\end{equation*}

Since $\hn\big(\partial Q_j \cap  \widetilde{R}_j  \big)= 2^{n+1} \varepsilon \, \varrho_j^{n-1}$ and  $\hn\big(\partial Q_{h,0} \cap  \widetilde{R}_{h,0}  \big)= 2^{n+1} \varepsilon \, \varrho_{h,0}^{n-1}$,  for $k$ large we have that
\begin{equation}\label{2505171111}
\hn\big(\partial Q_j \cap F_j\big) < C\, \varepsilon \, \varrho_j^{n-1}\,,\qquad  \hn\big(\partial Q_{h,0} \cap F_{h,0}\big) < C\, \varepsilon \, \varrho_{h,0}^{n-1} \,.
\end{equation}
By \eqref{1409181118} and \eqref{2505171041}
we deduce that 
\begin{equation*}\label{2505171923}
\begin{split}
J_{u_k}\subset  J_{\mathrm{int}} \cup \bigcup_{j=1}^{\overline{\jmath}} \big( (Q_j\cap \Gamma_{i_j}) \cup  (\partial Q_j \cap F_j)  \big)
\cup \bigcup_{h=1}^{\overline{h}} \big( (Q_{h,0}\cap \Gamma_{{h,0}}) \cup  (\partial Q_{h,0} \cap F_{h,0})  \big) \cap \Omega
\,.
\end{split}
\end{equation*}
where we have set
\begin{equation}\label{1409181437}
J_{\mathrm{int}}:= (J_{(u_k)_0} \cap B_0) \cup \bigcup_{j=1}^{\overline{\jmath}} \big((J_{(u_k)_j^1} \cap (Q_j^1)') \cup (J_{(u_k)_j^2} \cap (Q_j^2)') \big) \cup \bigcup_{h=1}^{\overline{h}} \big( (J_{(u_k)_{h,0}^1}\cap (Q_{h,0}^1)') \cup (J_{(u_k)_{h,0}^2} \cap (Q_{h,0}^2)')) \big) 
\end{equation}
By Theorem~\ref{teo:rough}, the jump of each $(u_k)_j^l$, $(u_k)_{h,0}^l$, $(u_k)_0$ is contained in a finite union of boundaries of cubes (so this holds for $J_{\mathrm{int}}$), and these functions are Lipschitz (with all their derivatives) up to their jump set. Therefore $J_{u_k}$ is closed and included in a finite union of $C^1$ hypersurfaces, and $u_k$ is Lipschitz (with all its derivatives) up to $J_{u_k}$. 

Notice that we may assume that $H:=\bigcup_j (Q_j\cap \Gamma_{i_j}) \subset J_{u_k}$.
Indeed, we can find $a >0$ arbitrarily small such that 
$\hn( H \cap \{x \colon [u_k](x)=a\})=0$ 
(with $[u_k](x)$ the size of the jump of $u_k$),
and then we can add to $u_k$ a perturbation with arbitrarily small $W^{1,\infty}(\Omega\sm H)$ norm, 
which is equal to $a$ on an arbitrarily large subset of $H$.

In particular,
\begin{equation}\label{0206171134}
\begin{split}
J_{u_k}\triangle J_u \subset J_{\mathrm{int}}  \cup (J_u\sm \bigcup_j \ol Q_j)  & \cup \bigcup_{j=1}^{\overline{\jmath}} \Big( \big((J_u \triangle \Gamma_{i_j}) \cap Q_j\big) \cup  (\partial Q_j \cap F_j)  \Big)
\\&
\cup \bigcup_{h=1}^{\overline{h}} \Big( (Q_{h,0}\cap \Gamma_{{h,0}}) \cup  (\partial Q_{h,0} \cap F_{h,0})  \Big) \cap \Omega
\,.
\end{split}
\end{equation}
We introduce the sets $\widetilde{R}$, defined as the union of the sets where $u_j^l$, $u_{h,0}^l$ differs from $u$, and $\widetilde{E}_k$, as the union of the exceptional sets in the rough approximations, starting from $u_j^l$, $u_{h,0}^l$, $u \chi_{\widetilde{B}_0}$. Then
\begin{equation}\label{1409182158}
\begin{split}
 \widetilde{E}_k & := (E_k)_0 \cup\bigcup_{j=1}^{\overline{\jmath}} \big( (E_k)_j^1\cup (E_k)_j^2 \big)  \cup\bigcup_{h=1}^{\overline{h}} \big( (E_k)_{h,0}^1\cup (E_k)_{h,0}^2 \big)  \,,
 \\ 
  \widetilde{R} & := \bigcup_{j=1}^{\ol \jmath} ( \widetilde{R}_j \cap Q_j ) \cup  \bigcup_{h=1}^{\ol h} (\widetilde{R}_{h,0} \cap Q_{h,0} \cap \Omega )  \,.
 \end{split}
\end{equation}
By \eqref{1roughmain} and \eqref{2505171147} (and its analogue for $(\widetilde{R}_{h,0}^l)'$, recall that the $Q_j$ and the $Q_{h,0}$ are pairwise disjoint), we have that
\begin{equation}\label{2505171940}
\lim_{k\to \infty}\mathcal{L}^n(\widetilde{E}_k)=0\,,\qquad \mathcal{L}^n(\widetilde{R})\leq C\,\varepsilon \,.
\end{equation}
We define
\begin{equation*}
E_k:= \widetilde{E}_k \cup \widetilde{R}\,.
\end{equation*}
Then $\lim_{k} \mathcal{L}^n(E_k) < C \, \varepsilon $.
It follows in particular that
\begin{equation}\label{0807172154}
\lim_{\varepsilon\to 0} \int \limits_{\widetilde{R}} |e(u)|^p\dx =0\,.
\end{equation}
Let us now put together \eqref{eqs:2505171144} with \eqref{eqs:1505171913}, corresponding to the cubes $Q_j$,  and their analogues corresponding to the cubes $Q_{h,0}$ and to $B_0$. 
According to the definition of $u_k$, we sum all these estimates. Notice that the $L^p$ norm of the symmetrised gradient of any rough approximation is controlled by that one of the starting function in a neighbourhood of the reference set (to fix the ideas, we have that $\|e((u_k)_j^l)\|_{L^p(Q_j^l)}$ is controlled by $\|e(u_j^l)\|_{L^p(\widetilde{Q}_j^l)}$, and the same for the cubes $Q_{h,0}$).  Then in the estimates we count twice the energy on $\widetilde{H}_t:= \bigcup_j (\widetilde{Q}_j \sm Q_j) \cup \bigcup_h (\widetilde{Q}_{h,0} \sm Q_{h,0})$. Nevertheless, recalling \eqref{1409181348}, we may ensure that
\begin{equation}\label{1409181349}
\lim_{t\to 0}  \int \limits_{\widetilde{H}_t}   |e(u)|^p \dx = 0\,. 
\end{equation}
Notice that we do not treat in this way the jump part, since it is not important to count it twice in the following estimates.

Thus we obtain that (we take into account also \eqref{1209181203}) 
\begin{subequations}
\begin{align}
\lim_{k\to \infty} \int\limits_{\Omega\setminus E_k} &|u_k-u|^p\dx =0\,,\label{2505171819}\\
\limsup_{k\to \infty} \int\limits_\Omega |e(u_k)|^p \dx &\leq \int \limits_\Omega |e(u)|^p \dx+ c \int \limits_{\widetilde{R} \cup \widetilde{H}_t} |e(u)|^p\dx  \,,\label{2505171829}
\end{align}
\begin{equation}\label{2505171924}
\begin{split}
\hn(J_{\mathrm{int}}) \leq  C\, \theta^{-1} \Big[ & \hn(J_u\cap B_0) + \sum_{j=1}^{\overline{\jmath}}\hn\big((J_u \setminus \Gamma_{i_j}) \cap \ol{Q_j}\big)   +\sum_{h=1}^{\overline{h}}\hn\big((\partial \Omega \setminus \Gamma_{h,0} ) \cap Q_{h,0} \big) \Big]\,, 
\end{split}
\end{equation}
\end{subequations}
where we recall the definition of $J_{\mathrm{int}}$ \eqref{1409181437}.
By \eqref{1305171147}, \eqref{1305171150}, \eqref{2105171956}, \eqref{2505171242}, and \eqref{1409181349}  it follows that 
\begin{equation}\label{1409181430}
\hn(J_{\mathrm{int}})< C\, \theta^{-1} \varepsilon\,.
\end{equation}
Above, we have used that \eqref{1305171150} and \eqref{2505171242} imply 
\begin{equation}\label{2505171920}
\sum_{j=1}^{\overline{\jmath}}\hn\big((J_u \triangle \Gamma_{i_j}) \cap \ol{Q}_j\big) <  C \, \varepsilon \, \hn(J_u)\,,\quad \sum_{h=1}^{\overline{h}}\hn\big((\partial\Omega \triangle \Gamma_{h,0}) \cap \ol{Q}_{h,0}\big) <  C \, \varepsilon \, \hn(\partial\Omega)\,,
\end{equation}
since the cubes $Q_j$ and $Q_{h,0}$ are pairwise disjoint.
Therefore, collecting \eqref{0206171134} with \eqref{1305171147}, \eqref{2505171111}, \eqref{1409181430}, and \eqref{2505171920},
we get
\begin{equation}\label{2505172006}
\hn(J_{u_k}\triangle J_u)< C\,\theta^{-1}\, \varepsilon  \,.
\end{equation}
By \eqref{2505171940}, \eqref{0807172154}, \eqref{1409181349}, \eqref{2505171819}, \eqref{2505171829}, \eqref{2505172006}, 
and by the arbitrariness of $\varepsilon$ and $t$, we get \eqref{1main}, \eqref{3main}, and 
\begin{equation}\label{1505172318}
  \limsup_{k\to \infty} \|e(u_k)\|_{L^p(\Omega;\Mnn)}\leq \|e(u)\|_{L^p(\Omega;\Mnn)}\,.
 \end{equation} 
Moreover, \eqref{1main} gives that $u_k\to u$ in measure, and then, by \cite[Remark~2.2]{FriSol16}, there exists a subsequence of $u_k$, not relabelled, and a nonnegative, increasing, concave function $\ol\psi$ such that
\begin{equation*}
\lim_{s\to +\infty} \ol\psi(s) = +\infty
\end{equation*}
 and
\begin{equation*}
\sup_{k \in \mathbb{N}} \int\limits_\Omega \ol\psi(|u_k|) \dx \leq 1\,.
\end{equation*} 
Therefore we can apply the Compactness Theorem for $GSBD^p$ \cite[Theorem~11.3]{DM13}, which implies that, up to a further subsequence,
 \begin{equation*}
 e(u_k)\rightharpoonup e(u) \quad\text{in }L^p(\Omega;\Rn)\,.
 \end{equation*}
  Therefore, by \eqref{1505172318}, the sequence $u_k$ satisfies also \eqref{2main}.
\newline
\\
{\bf Proof of \eqref{4main}.}
Fix $j\in \{1,\dots, \overline{\jmath}\}$ and take 
\begin{equation*}
\int \limits_{\Gamma_{i_j}\cap \ol Q_j}\hspace{-1em}\tau(|\tr ((u_k)_j^1- u)|) \dh\equiv \hspace{-0.5em}\int \limits_{\Gamma_{i_j}\cap \ol Q_j}\hspace{-1em}\tau(|u_k^+- u^+|) \dh \,,
\end{equation*}
where the trace is considered from the interior side of $\Gamma_{i_j}$ with respect to $Q_j^1$, and we assume by convention that this is the ``positive'' side of $\Gamma_{i_j}$.
We define the rectangle
\begin{equation*}
\widehat{Q}_j^1:=\Big\{x_j + \sum_{i=1}^{n-1} y_i \, b_{j,i}  + y_n \, \nu_u(x_j) \colon y_i\in (-(1-\sqrt{\varepsilon})\varrho_j, (1-\sqrt{\varepsilon})\varrho_j),\, y_n \in (-\varrho_j, \varepsilon \varrho_j)\Big\} 
\end{equation*}
and call $\xi_n$ the normal $\nu_u(x_j)$. Then $\Gamma_{i_j}\cap \widehat{Q}_j^1$ is a graph in the direction $\xi_n$ of Lipschitz constant less than $\varepsilon$. By \cite[Lemma~3.1]{Bab15}, there exists a universal constant $\eta_0>0$ (indeed it depends decreasingly on the Lipschitz constant of the graph of $\Gamma_{i_j}\cap \widehat{Q}_j^1$ in the direction $\xi_n$, which is less than $1/2$) such that for any $\xi \in \Sn$ with $|\xi-\xi_n|<\eta_0$, one has that $\Gamma_{i_j}\cap \widehat{Q}_j^1$ is a Lipschitz graph in the direction $\xi$. In particular, let $(\xi_1,\dots,\xi_{n-1},\xi_n)$ be a basis of $\Rn$ with $|\xi_h-\xi_n|<\eta_0$. 
Arguing as in \cite[equations (17)--(19)]{Iur14}, 
\begin{equation*}\label{1506171854}
\int \limits_{\Gamma_{i_j}\cap \widehat{Q}_j^1}\hspace{-1em}\tau(|\tr((u_k)_j^1-u)|)\dh \leq C \sum_{h=1}^n \int \limits_{\Gamma_{i_j}\cap \widehat{Q}_j^1} |\tr(\tau(((u_k)_j^1-u)\cdot \xi_h))| \dh
\end{equation*}
for a universal constant $C>0$. Since $\tau(((u_k)_j^1-u)\cdot \xi_h)\in L^1(Q_j^1)$ and $\mathrm{D}_{\xi_h}\tau(((u_k)_j^1-u)\cdot \xi_h)\in \mathcal{M}_b^+(Q_j^1)$ for any $h$, arguing as in \cite[Theorem~3.2, Steps 1 and 4]{Bab15} we deduce that
\begin{equation*}
\begin{split}
\int \limits_{\Gamma_{i_j}\cap \widehat{Q}_j^1} |\tr(\tau(((u_k)_j^1-u)\cdot \xi_h))| \dh \leq &\frac{C}{\sqrt{\varepsilon}\varrho_j} \|\tau(((u_k)_j^1-u)\cdot \xi_h)\|_{L^1(A^{\xi_h}_\varepsilon)}+ C \int \limits_{A^{\xi_h}_\varepsilon} |e((u_k)_j^1-u)|\dx \\ &+C \, \hn(J_{(u_k)_j^1-u}\cap A^{\xi_h}_\varepsilon) \,,
\end{split}
\end{equation*}
for $C>0$ depending only on $n$, and $A^{\xi_h}_\varepsilon:=\{y-s\xi_h\colon y \in \Gamma_{i_j} \cap \widehat{Q}_j^1, \,0<s<{\sqrt{\varepsilon}\varrho_j}\}\subset Q_j^1$.
Being $\tau$ bounded, by \eqref{2505171147} and \eqref{1roughmain} we get that 
\begin{equation*}
\limsup_{k\to \infty}\frac{C}{\sqrt{\varepsilon}\varrho_j} \|\tau(((u_k)_j^1-u)\cdot \xi_h)\|_{L^1(A^{\xi_h}_\varepsilon)}< C\sqrt{\varepsilon}\varrho_j^{n-1}<C \sqrt{\varepsilon}\hn(J_u\cap \ol{Q_j})\,,
\end{equation*}
where the last inequality follows by \eqref{1305171150}.
By construction of $(u_k)_j^1$ in Theorem~\ref{teo:rough} (in particular by \eqref{1506171757}) we deduce that
\begin{equation}\label{1506171841}
C \int \limits_{A^{\xi_h}_\varepsilon} |e((u_k)_j^1-u)|\dx < C \hspace{-2em}\int \limits_{A^{\xi_h}_\varepsilon+B(0, 8k^{-1})} \hspace{-2em}|e(u)|^p\dx\,,
\end{equation}
by \eqref{3roughmain} that
\begin{equation*}\label{1506171842}
\hn(J_{(u_k)_j^1-u}\cap A^{\xi_h}_\varepsilon) < C\theta^{-1} \hn(J_u\cap Q_j^1) <  C \theta^{-1}\hn\big((J_u\setminus \Gamma_{i_j})\cap Q_j\big)\,,
\end{equation*}
and by definition of $\widehat{Q}_j^1$ that
\begin{equation*}
\hn(\Gamma_{i_j}\cap Q_j\setminus \widehat{Q}_j^1) < C (\sqrt{\varepsilon}\varrho_j)^{n-1} < C \sqrt{\varepsilon}^{n-1}\hn(J_u\cap \ol{Q_j})\,.
\end{equation*}
Collecting the informations above, we get (recall that $\tau$ is bounded) that
\begin{equation*}
\begin{split}
\int \limits_{\Gamma_{i_j}\cap \ol Q_j} \hspace{-1em}\tau(|u_k^+- u^+|) \dh < & C \Big(\sqrt{\varepsilon}\hn(J_u\cap \ol{Q_j})+ \sup_{h}  \hspace{-2.5em}\int \limits_{A^{\xi_h}_\varepsilon+B(0, 8k^{-1})} \hspace{-2.5em}|e(u)|^p\dx \\ &+ \theta^{-1}\hn\big((J_u\setminus \Gamma_{i_j})\cap Q_j\big) \Big)\,.
\end{split}
\end{equation*}
We can now argue similarly in $Q_j^2$ and sum over $j$. Recalling \eqref{2505171920} and \eqref{2505172006}, and since $\tau$ is bounded, it follows that
\begin{equation}
\int \limits_{J_{u_k}\cup J_u} \hspace{-1em}\tau(|u_k^\pm-u^\pm|) \dh  < C\,\theta^{-1}\, \varepsilon + c\, \sqrt{\varepsilon} \, \hn(J_u) + \int \limits_{A_\varepsilon} |e(u)|^p\dx\,,
\end{equation}
where $\mathcal{L}^n(A_\varepsilon)\to 0$ as $\varepsilon\to 0$ (for $k$ much smaller than $\varepsilon$). 
 By the arbitrariness of $\varepsilon$
 \begin{equation}\label{1506172001}
 \lim_{k\to \infty} \int \limits_{J_{u_k}\cup J_u} \hspace{-1em}\tau(|u_k^\pm-u^\pm|) \dh=0\,.
 \end{equation}
As for the trace on $\partial\Omega$, we can argue similarly to \eqref{1506172001}, with $(u_k)_{h,0}^1$ and $(u_k)_{h,0}^2$ in place of $(u_k)_j^1$, $(u_k)_j^2$, and employing \eqref{2105171956}, \eqref{2505171242}, \eqref{1209181203}, to conclude \eqref{4main}.
\newline
\\
{\bf Proof of \eqref{5main}.}
Assume that $\psi(|u|)\in L^1(\Omega)$, for $\psi$ as in the statement of the theorem.
Recalling \eqref{1007170937} and \eqref{1409182158}, by \eqref{2505171147} and \eqref{4roughmain} we have (sum all the contributions)
\begin{equation}\label{1007171004}
\lim_{k\to \infty} \int \limits_{\Omega \setminus \widetilde{R}} \psi(|u_k-u|) \dx = 0\,,
\end{equation} 
 and
\begin{equation*}
\lim_{k\to \infty} \int \limits_{(\widetilde{R}_j^l)'} \psi(|(u_k)_j^l-\widehat{u}_j^l|)\dx = 0\,,
\end{equation*}
for every $j$ and $l=1,2$ (and also for the cubes $Q_{h_0}^l$, replacing formally above the subscript $_j$ with the subscript $_{0,h}$).
By \eqref{1007170916} (and its analogue for the cubes $Q_{h_0}^l$) we get
\begin{equation*}
\limsup_{k\to \infty} \int \limits_{\widetilde{R}}  \psi(|u_k|)\dx \leq c\, \limsup_{k\to\infty} \int \limits_{\widetilde{R}} \psi(|u|)\dx\,.
\end{equation*}
Therefore 
\begin{equation*}
\limsup_{k\to \infty} \int \limits_{\widetilde{R}} \psi(|u_k-u|) \dx\leq  C_\psi  \limsup_{k\to \infty} \int \limits_{\widetilde{R}} \big( \psi(|u_k|)+\psi(|u|)\big) \dx \leq c\, C_\psi \, \limsup_{k\to\infty} \int \limits_{\widetilde{R}} \psi(|u|)\dx\,,
\end{equation*}
which vanishes as $\varepsilon$ tends to 0, due to \eqref{2505171940}, since $\psi(|u|)\in L^1(\Omega)$.
Together with \eqref{1007171004}, this proves \eqref{5main} and completes the proof of the theorem.
\end{proof}

\begin{remark} 
 The point of view we have adopted here may be interpreted also as follows: first extend $u$ to a surface with overlapping (one could also see it as a surface with multiplicity, at most 2), then  use a fixed approximation on this surface and restrict to the zones where there is no overlapping (or the multiplicity is 1). 
The construction in Theorem~\ref{teo:main} may be slightly modified also in the following way: apply Theorem~\ref{teo:rough} to suitable compact subsets of  $\Omega\setminus \Big(\bigcup_{j=1}^{\overline{\jmath}}\ol{R}_j \cup \bigcup_{h=1}^{\ol h} \ol{R}_{h,0}\Big)$,  and reflect the smooth function obtained (so without using Lemma~\ref{le:Nitsche}) on both sides of $R_j$ and $R_{h,0}$ with respect to $J_u$ and $\partial\Omega$,
the further arguments being similar to what done above. Working in a compact subset of  $\Omega\setminus \Big(\bigcup_{j=1}^{\overline{\jmath}}\ol{R}_j \cup \bigcup_{h=1}^{\ol h} \ol{R}_{h,0}\Big)$  should permit to have for free an extension of the original function to a larger domain, without employing partitions of the unity.
Arguing in this way, we expect that one could find alternative proofs to our density result, still without assuming that 
$u$ is $p$-summable, using different approximation techniques, such as the one 
in \cite{CFI17Density}.  
\end{remark}
\section{Approximation of brittle fracture energies}\label{Sec4}
Here we show how the density result of Theorem~\ref{teo:main} may be employed to approximate, in the sense of $\Gamma$-convergence, the Griffith energy for brittle fracture, under no assumption on the integrability of the displacement. This is a novelty in the vectorial case, except for $n=2$, where this convergence (for quadratic bulk energy) may be proven starting from the density result \cite[Theorem~2.5]{FriPWKorn}.
In particular, for phase field approximations \emph{à la} Ambrosio-Tortorelli \cite{AmbTorCPAM, AmbTorUMI}, one needs for a density theorem of the type of Theorem~\ref{teo:main} in the $\Gamma$-$\limsup$ inequality; the $\Gamma$-limit is then determined in the subspace of $GSBD$ in which every displacement is approximated by the density result.

On the other hand, since one is interested in the approximation of minimisers for Griffith energy, it is natural to impose some conditions to prevent that the set of minimisers coincides with the constant displacements. Two important examples are Dirichlet boundary condition and a compliance condition for the displacement with respect to a given datum $g$ on the whole $\Omega$.
We show how to approximate the resulting brittle fracture energy, under some geometric assumptions on the Dirichlet part of the domain in the first case, and for a very large class of compliance functions (possibly such that the displacement is not a priori forced to be even integrable) in the second case. Requiring some integrability on displacement in the density theorem forces to include lower order terms in the energy functional, in order to guarantee \emph{a priori} such integrability. 

 We remark that in \cite{CC18} we have recently shown a general compactness result in $GSBD$, that has been there employed to prove
 existence of minimisers for the Dirichlet minimisation problem in any dimension $n\geq 2$, generalising the $2$-dimensional existence result \cite[Theorem~6.2]{FriSol16}. The compactness result is used also here to show, in Theorem~\ref{thm:compmin}, compactness for minimisers of the phase field approximating functionals: up to a subsequence, these converge pointwise outside an exceptional set where the limit displacement could be infinite.

Let us introduce some notation for this section. Let $p$, $q>1$, $a>0$, $\varepsilon_k$, $\eta_k>0$ with
$
\varepsilon_k\to 0$, $\eta_k\to 0$, $\frac{\eta_k}{\varepsilon_k^{p-1}}\to 0$, for $k\in \N$.
Let $W\colon \R\times \Mnn\to [0,\infty)$ be convex in the second argument and lower semicontinuous, nondecreasing with respect to $s$, with $W(s,0)=0$ and
\begin{equation}\label{2607172253}
s(c_1\,|{\cdot}|^p-c_2)\leq W(s,\cdot) \leq s(c_3\,|{\cdot}|^p+c_4)\quad\text{for every }s\in \R\,
\end{equation}
 for some $c_1, \, c_2,\, c_3, \, c_4>0$, and $d\colon [0,1]\to [0,\infty)$ continuous, decreasing, with $d(1)=0$.
 For every bounded open set $A\subset \R^n$ and measurable functions $u\colon A\to \Rn$ and $v\colon A\to [0,1]$, we define 
 \begin{equation*}
 G^A_k(u,v):=
 \begin{dcases}
\int \limits_A \Big(W(v,e(u))+\frac{d(v)}{\varepsilon_k}+a\,\varepsilon_k^{q-1} |\nabla v|^q \Big)\dx \quad &\text{in }W^{1,p}(A;\Rn)\times V_k^A\,,\\
+\infty &\text{otherwise,}
\end{dcases}
 \end{equation*}
 where
 \begin{equation*}
V_k^A:=\{v\in W^{1,q}(A)\colon \eta_k\leq v\leq 1\}\,.
\end{equation*}
 and the \emph{generalised Griffith energy}
\begin{equation*}
G^A(u,v):=
\begin{dcases}
\int\limits_A W(1,e(u))\dx + \alpha\hn(J_u) \quad &\text{in } GSBD^p(A)\times \{v=1 \,\,\mathcal{L}^n\text{-a.e.\ in }A\}\,,\\
+\infty &\text{otherwise,}
\end{dcases}
\end{equation*} 
with 
\[
\alpha:=2(q')^{\frac{1}{q'}}(aq)^{\frac{1}{q}}\int_0^1d(s)^{\frac{1}{q'}}\,\mathrm{d}s,\qquad \frac{1}{q}+\frac{1}{q'}=1\,.
\]

\begin{theorem}\label{teo:gammaconvG}
Let $A\subset \Rn$ be a bounded set with finite perimeter. Then 
$G_k^A$ $\Gamma$-converge to $G^A$ with respect to the topology of the convergence in measure for $u$ and $v$. 
\end{theorem}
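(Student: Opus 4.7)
The plan is the standard two-step scheme for $\Gamma$-convergence: prove the $\Gamma$-$\liminf$ inequality along an arbitrary sequence $(u_k,v_k)\to (u,v)$ in measure, and exhibit a recovery sequence realising the $\Gamma$-$\limsup$ bound. Theorem~\ref{teo:main} is the decisive ingredient for the $\limsup$, since it reduces the problem to the approximation of displacements whose jump is a finite union of $C^1$ pieces and which are Lipschitz off the jump; without it, one could only construct recovery sequences for considerably more restricted classes of $u$.

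For the $\Gamma$-$\liminf$, let $(u_k,v_k)\to(u,v)$ in measure with $\liminf_k G_k^A(u_k,v_k)<+\infty$ and work along a subsequence achieving the $\liminf$. Since $\int_A d(v_k)/\varepsilon_k\dx$ stays bounded and $\varepsilon_k\to 0$, one has $v_k\to 1$ a.e., hence $v=1$. Young's inequality gives
\begin{equation*}
a\varepsilon_k^{q-1}|\nabla v_k|^q+\frac{d(v_k)}{\varepsilon_k}\geq (aq)^{1/q}(q')^{1/q'}\,|\nabla \Phi(v_k)|,\qquad \Phi(t):=\int_0^t d(s)^{1/q'}\,\mathrm ds,
\end{equation*}
so that $\Phi(v_k)$ has uniformly bounded variation. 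Together with the coercivity $W(v_k,e(u_k))\geq c_1 v_k|e(u_k)|^p-c_2 v_k$, this is the setup for a classical Ambrosio--Tortorelli slicing argument in $GSBD^p$ (cf.~\cite{Iur14,FocIur14}) carried out on $\hn$-a.e.\ line $A^\xi_y$. To circumvent the absence of integrability on $u$, one first runs the argument for truncations $\tau_M\circ u_k$ (which are in $L^\infty$ and whose slices lie in $SBV_{\mathrm{loc}}$), applies Fatou on one-dimensional slices, and then integrates over $y\in\Pi^\xi$ using the slicing characterisation of $GSBD^p$ (Definition~\ref{def:GBD}); monotone passage to the limit $M\to\infty$ yields $u\in GSBD^p(A)$ and $G^A(u,1)\leq\liminf_k G_k^A(u_k,v_k)$.

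For the $\Gamma$-$\limsup$, it suffices to construct a recovery sequence for $u\in GSBD^p(A)$ with $v=1$. Apply Theorem~\ref{teo:main} to extract $u_h\in SBV^p(A;\Rn)\cap L^\infty(A;\Rn)$ with $J_{u_h}$ closed and contained in a finite union of closed connected pieces of $C^1$ hypersurfaces, $u_h\in W^{1,\infty}(A\setminus J_{u_h};\Rn)$, $u_h\to u$ in measure, $e(u_h)\to e(u)$ in $L^p$, and $\hn(J_{u_h}\triangle J_u)\to 0$. For each such smooth approximant, set $K_h:=J_{u_h}$ and use the standard Ambrosio--Tortorelli construction: define $v_k^h(x):=\psi_k(\mathrm{dist}(x,K_h))$ in a tubular neighbourhood $T_k$ of size $O(\varepsilon_k)$ and $v_k^h=1$ elsewhere, where $\psi_k$ is the rescaled optimal 1D profile for $a|\phi'|^q+d(\phi)$ satisfying $\psi_k(0)=\eta_k$; define $u_k^h$ by a smooth interpolation of $u_h$ across $T_k$ on each side of $K_h$ (the two one-sided traces of $u_h$ on the $C^1$ pieces of $K_h$ are Lipschitz, hence the interpolant is well defined), with $u_k^h=u_h$ outside $T_k$. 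Since $|\nabla u_k^h|=O(\varepsilon_k^{-1})$ on $T_k$ and $v_k^h=O(\eta_k)$ there, the bulk contribution on $T_k$ is controlled by $\eta_k\varepsilon_k^{1-p}\hn(K_h)\to 0$ by the assumption $\eta_k/\varepsilon_k^{p-1}\to 0$, while the coarea formula and the $C^1$ regularity of $K_h$ give the optimal surface cost $\alpha\hn(K_h)$ in the limit. A diagonal argument, using $e(u_h)\to e(u)$ in $L^p$ and $\hn(J_{u_h})\to\hn(J_u)$ from Theorem~\ref{teo:main}, produces a recovery sequence for $u$.

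The principal obstacle is the $\Gamma$-$\liminf$ inequality without any a priori integrability on the limit $u$: the identification between the slices of the limit and the limits of the slices of $u_k$ is not automatic in this regime, and neither is weak $L^p$-compactness of $e(u_k)$. The remedy is to argue at each truncation level $M$, using the $GSBD^p$ compactness of Dal Maso~\cite{DM13} and its refinement in the companion work~\cite{CC18} for the sequence $\tau_M\circ u_k$, and then to let $M\to\infty$ exploiting the monotonicity of $\mathcal H^{n-1}(J_{\tau_M\circ u})$ and the convergence of $e(\tau_M\circ u)$ to $e(u)$ on $\{|u|<\infty\}$ (a set of full Lebesgue measure for $u\in GSBD^p$).
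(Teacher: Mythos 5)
Your $\Gamma$-$\limsup$ argument coincides with the paper's (Theorem~\ref{teo:main} to reduce to $u\in SBV^p\cap L^\infty$ with regular jump, the standard Ambrosio--Tortorelli profile of the distance to $J_{u_h}$, the scaling $\eta_k\varepsilon_k^{1-p}\to0$ for the bulk term in the tube, and a diagonal argument) and is fine. The genuine gap is in your $\Gamma$-$\liminf$: the truncation device $\tau_M\circ u_k$ applied to the \emph{vector field} $u_k$ does not work in the symmetric-gradient setting. One has $e(\tau_M\circ u_k)=\mathrm{sym}\big(D\tau_M(u_k)\nabla u_k\big)$, which involves the full gradient $\nabla u_k$ and is therefore not controlled by $\int_A v_k|e(u_k)|^p\dx$: no uniform Korn inequality is available because $v_k$ degenerates near the incipient jump. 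Consequently the truncated sequence carries no uniform $GSBD^p$-type bound, so the compactness results of \cite{DM13} or \cite{CC18} cannot be applied to $\tau_M\circ u_k$; for the same reason it is not known that $\tau_M\circ u\in GSBD(A)$, and the claimed convergence of $e(\tau_M\circ u)$ to $e(u)$ has no meaning here (the failure of truncation arguments is precisely one of the standard obstructions in $BD/GSBD$). Note also that the slices of $\tau_M\circ u_k$ in a direction $\xi$ are \emph{not} truncations of $\widehat{(u_k)}{}^\xi_y$, since they mix all components of $u_k$, so the one-dimensional energy bounds do not transfer to the truncated field either.

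Moreover, the ``principal obstacle'' you identify is handled in the paper without any truncation of the $n$-dimensional field. Since $u_k\cdot\xi\to u\cdot\xi$ in measure, composing with $\arctan$ and using Fubini gives, up to a subsequence, $(\widehat{u}_k)^\xi_y\to\widehat{u}^\xi_y$ in measure on $A^\xi_y$ for $\hn$-a.e.\ $y\in\Pi^\xi$; the one-dimensional Ambrosio--Tortorelli lower semicontinuity (as in \cite[(65)--(68)]{Iur14}, where any truncation is a purely scalar, one-dimensional operation) holds under convergence in measure, and integrating the resulting one-dimensional estimates over $\Pi^\xi$ yields $u\in GSBD^p(A)$ directly from Definition~\ref{def:GBD}(b), the surface inequality by localisation, and the bulk inequality via the weak convergence $(v_k)^{1/p}e(u_k)\rightharpoonup e(u)$ in $L^p(A;\Mnn)$ (slicing plus the polarisation identity), combined with Egorov and the Ioffe--Olech theorem. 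If you rewrite your $\liminf$ step along these lines, truncating only the scalar one-dimensional slices if needed, the rest of your scheme goes through.
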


\begin{proof}
Being the convergence in measure metrisable, by \cite[Proposition~8.1]{DMLibro} the $\Gamma$ limit of $G_k^A$ is characterised in terms of convergent sequences.
Let us first prove the $\Gamma$-$\liminf$ inequality, following the lines of the proof of \cite[Theorem~8]{Iur14}. We show that if $(u_k,v_k)$ converge in measure to $(u,v)$ and $F_k^A(u_k,v_k)$ is bounded, then $u\in GSBD^p(A)$, $v=1$ $\mathcal{L}^n$ a.e.\ in $A$, and 
\begin{subequations}\label{eqs:0407170004}
\begin{align}
\int \limits_A W(1,e(u))\dx \leq &\liminf_{k\to \infty} \int \limits_A W(v_k,e(u_k))\dx\,,\label{0307171748}\\
\alpha\hn(J_u)\leq &\liminf_{k\to \infty} \int \limits_A \Big(\frac{d(v_k)}{\varepsilon_k}+a\,\varepsilon_k^{q-1} |\nabla v_k|^q \Big)\dx\,.\label{0307171749}
\end{align}
\end{subequations}
It is immediate that $v_k\to 1$ in $L^1(A)$. To see \eqref{0307171748}, we show that
\begin{equation}\label{0307171836}
(v_k)^{\frac{1}{p}}e(u_k)\rightharpoonup e(u)\quad\text{ in }L^p(A;\Mnn)\,,
\end{equation}
by proving that, for every $\xi \in \Sn$ and $w\in L^p(A)$
\begin{equation}\label{0307171904}
\int \limits_A (e(u)\xi\cdot\xi-w)^p \dx \leq \liminf_{k\to\infty} \int \limits_A ((v_k)^{\frac{1}{p}} e(u_k)\xi\cdot\xi-w)^p \dx\,.
\end{equation}
This gives $(v_k)^{\frac{1}{p}}e(u_k)\xi\cdot\xi\rightharpoonup e(u)\xi\cdot\xi$ in $L^p(A)$ for every $\xi \in \Sn$, and then \eqref{0307171836} by the Polarisation Identity. 
At this stage, \eqref{0307171748} follows by the facts that $v_k\leq 1$, $v_k\to 1$ uniformly up to a set with small measure by Egorov's Theorem, and by the Ioffe-Olech semicontinuity theorem (cf.\ \cite[Theorem~2.3.1]{ButLibro}).

Thus, let us fix $\xi \in \Sn$. For simplicity, we prove \eqref{0307171904} in the case when $w=0$, the general case being obtained by approximating every $w\in L^p(A)$ by piecewise constant functions on a Lipschitz partition of $A$, for which the lower semicontinuity is then immediate.
Notice that it is not restrictive to assume that the $\liminf$ in \eqref{0307171904} is a limit.
Moreover, up to a subsequence, not relabelled, we have that for $\hn$-a.e.\ $y\in \Pi^\xi$
\begin{equation}\label{0307171925}
(\hat{u}_k)^\xi_y \to \hat{u}^\xi_y \quad\text{in measure in }A^\xi_y,\qquad (v_k)^\xi_y\to v^\xi_y\quad \text{in }L^1(A^\xi_y)\,.
\end{equation}
Indeed, a sequence $g_k$ converges to a function $g$ in measure if and only if $\arctan(g_k)$ converges to $\arctan(g)$ in $L^1$. Therefore, by Fubini's Theorem and the fact that $u_k\cdot \xi \to u\cdot \xi$ in measure in $A$, one has
\begin{equation*}
\int\limits_{\Pi^\xi}\Big(\int \limits_{A^\xi_y} |\tau((\hat{u}_k)^\xi_y)-\tau((\hat{u})^\xi_y)|(t)\,\mathrm{d}t\Big)\dh = \int \limits_A |\tau(u_k\cdot \xi)-\tau(u\cdot \xi)|\dx \to 0\,,
\end{equation*}
for $\tau=\arctan$. This gives \eqref{0307171925} for $u_k$, while the convergence for $v_k$ follows easily from Fubini's Theorem for $v_k\cdot \xi$.

It is now standard to see, as in \cite[(65)--(68)]{Iur14}, that $(\hat{u}_k)^\xi_y\in SBV^p(A^\xi_y)$ and
\begin{subequations}
\begin{align}
\int \limits_{A^\xi_y} |\nabla(\hat{u}^\xi_y)|^p\,\mathrm{d}t &\leq \liminf_{k\to \infty} \int \limits_{A^\xi_y} (v_k)^\xi_y \,|\nabla ((\hat{u}_k)^\xi_y)|^p\,\mathrm{d}t\,,\label{0307172120}\\
\alpha \mathcal{H}^0(J_{\hat{u}^\xi_y}) &\leq \liminf_{k\to \infty} \int \limits_{A^\xi_y} \bigg(\frac{d((v_k)^\xi_y)}{\varepsilon_k}+a\varepsilon_k^{q-1}\,|\nabla((v_k)^\xi_y)|^q\bigg)\,\mathrm{d}t\,. \label{0307172121}
\end{align}
\end{subequations}
Moreover, we get $u\in GSBD(A)$ and \eqref{0307171904} for $w=0$ arguing again as in the proof of \cite[Theorem~8]{Iur14}, with the exponents $2$ and $p$ therein for $u$ and $v$ replaced by $p$ and $q$. 
In particular, integrating \eqref{0307172120} over $\Pi^\xi$ gives \eqref{0307171904} for $w=0$, by \eqref{3105171927}. In the same way, one integrates \eqref{0307172121} over $\Pi^\xi$ and applies a localisation argument to deduce \eqref{0307171749}. Notice that the analogous of the Structure Theorem \cite[Theorem~4.5]{AmbCosDM97} holds also for $GSBD$, see for instance \cite[Theorem~3.1]{Fri17ARMA}.
By the discussion at the beginning of the proof, we conclude \eqref{0307171748} and the $\Gamma$-$\liminf$ inequality.

The $\Gamma$-$\limsup$ inequality follows from our density result. Indeed, for every $u\in GSBD(A)$ there exist $u_k\in SBV(A;\Rn)\cap L^\infty(A;\Rn)$ satisfying the approximation properties of Theorem~\ref{teo:main}. In particular, \[G^A(u_k,1)\to G^A(u,1)\,.\]
By a diagonalisation argument, it is then enough to construct a recovery sequence for $(u,1)$, with $u\in SBV(A;\Rn)\cap L^\infty(A;\Rn)$. This is done by the same construction as in \cite{Cha04, Iur14}, that was applied therein to a quadratic bulk energy in $e(u)$ but works also for a bulk energy with $p$-growth (for this case see the proof of the $\Gamma
$-$\limsup$ inequality in \cite{CC19b}).
\end{proof}

Let $A\subset \Rn$ be a bounded set 
and $g\colon A\to \Rn$ be a measurable function such that $\psi(|g|)\in L^1(A)$, 
for $\psi\colon [0,\infty)\to [0,\infty)$ 
increasing, 
continuous, and satisfying \eqref{hppsi} (see Theorem~\ref{teo:rough}).
For every measurable functions $u\colon A\to \Rn$ and $v\colon A\to [0,1]$ we define
\begin{equation*}
 F^A_k(u,v):=G_k^A+\int\limits_A \psi(|u-g|)\dx\,,
 \end{equation*}
and the \emph{generalised Griffith energy with fidelity term}
\begin{equation*}
F^A(u,v):=G^A(u,v)+\int\limits_A \psi(|u-g|)\dx\,,
\end{equation*}
where $F^A(u,v)=+\infty$ if $\psi(|u-g|)$ is not in $L^1(A)$.
Then we have the following convergence.
\begin{theorem}\label{teo:gammaconvF}
Let $A\subset \Rn$ be a bounded set with finite perimeter. Then 
$F_k^A$ $\Gamma$-converge to $F^A$ with respect to the topology of the convergence in measure for $u$ and $v$. 
Moreover, if $F_k^A(u_k,v_k)\leq \inf F_k^A + \gamma_k$ for any $k$, namely if $(u_k,v_k)$ is a $\gamma_k$-minimiser for $F_k^A$, with $\gamma_k \to 0$, then, up to a subsequence, $(u_k,v_k)$ converge in measure to some $(u,1)$, which is a minimiser of $F^A$, and
\begin{equation*}
F_k^A(u_k,v_k)\to F^A(u,1)\,.
\end{equation*}
\end{theorem}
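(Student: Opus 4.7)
\medskip
The plan is to exploit Theorem~\ref{teo:gammaconvG} by treating the fidelity term $\int_A \psi(|\cdot-g|)\dx$ as a perturbation that is lower semicontinuous along sequences converging in measure and continuous along the approximations supplied by Theorem~\ref{teo:main}. For the $\Gamma$-$\liminf$, suppose $(u_k,v_k)\to(u,v)$ in measure with $\sup_k F_k^A(u_k,v_k)<\infty$. Theorem~\ref{teo:gammaconvG} already yields $u\in GSBD^p(A)$, $v=1$ a.e., and the liminf bound for $G^A_k$. Passing to a subsequence, $u_k\to u$ pointwise a.e.; by continuity of $\psi$, $\psi(|u_k-g|)\to\psi(|u-g|)$ a.e., and Fatou's lemma delivers the fidelity liminf.

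For the $\Gamma$-$\limsup$, fix $(u,1)$ with $F^A(u,1)<\infty$. The first observation is that $\psi(|u|)\in L^1(A)$: by \eqref{hppsi}, $\psi(|u|)\leq C_\psi(\psi(|u-g|)+\psi(|g|))$ and both summands are integrable. I would apply Theorem~\ref{teo:main} with this $\psi$ to produce $u_m\in SBV^p(A;\Rn)\cap L^\infty(A;\Rn)$ converging to $u$ in measure, with $e(u_m)\to e(u)$ in $L^p(A;\Mnn)$, $\hn(J_{u_m}\triangle J_u)\to 0$, and $\int_A \psi(|u_m-u|)\dx\to 0$. Then $\psi(|u_m-g|)\leq C_\psi(\psi(|u_m-u|)+\psi(|u-g|))$; the dominating sequence converges to $C_\psi\psi(|u-g|)$ in $L^1$, while $\psi(|u_m-g|)\to\psi(|u-g|)$ pointwise a.e.\ along a further subsequence, so a Pratt-type generalised dominated convergence argument yields $\int_A \psi(|u_m-g|)\dx\to \int_A \psi(|u-g|)\dx$. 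Since $W(1,\cdot)$ is continuous with $p$-growth, $\int_A W(1,e(u_m))\dx\to \int_A W(1,e(u))\dx$; combined with \eqref{3main}, this gives $F^A(u_m,1)\to F^A(u,1)$. For each such smooth $u_m$, the Ambrosio-Tortorelli construction used in the proof of Theorem~\ref{teo:gammaconvG} provides $(u_m^k,v_m^k)$ with $G_k^A(u_m^k,v_m^k)\to G^A(u_m,1)$; the construction perturbs the bounded function $u_m$ only on vanishing neighbourhoods of $J_{u_m}$, so $u_m^k$ stays uniformly bounded in $L^\infty$ and converges to $u_m$ in measure, and ordinary dominated convergence gives $\int_A \psi(|u_m^k-g|)\dx\to \int_A \psi(|u_m-g|)\dx$. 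A standard diagonal extraction then produces the recovery sequence.

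For the compactness of almost minimisers, testing with $(0,1)$ yields $\inf F_k^A\leq \int_A \psi(|g|)\dx<\infty$ (using $W(1,0)=0$ and $d(1)=0$), so $\sup_k F_k^A(u_k,v_k)<\infty$. The resulting bound on $\int_A \psi(|u_k-g|)\dx$, together with $\psi(s)\to\infty$, gives via Chebyshev the tightness in measure of $u_k$. Combined with the bulk and phase-field bounds from $G^A_k(u_k,v_k)$, Proposition~\ref{prop:compattezza} extracts a subsequence along which $u_k\to u$ in measure for some $u\in GSBD^p(A)$ and $v_k\to 1$ in $L^1(A)$. Then by the $\Gamma$-$\liminf$ proved above,
\[
F^A(u,1)\leq \liminf_{k\to\infty} F_k^A(u_k,v_k) = \liminf_{k\to\infty}\inf F_k^A,
\]
while the $\Gamma$-$\limsup$ applied to any $(\bar u,1)$ with $F^A(\bar u,1)<\infty$ gives $\limsup_k \inf F_k^A\leq F^A(\bar u,1)$. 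Hence $(u,1)$ minimises $F^A$ and $F_k^A(u_k,v_k)\to F^A(u,1)$.

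The step I expect to be the main obstacle is the compactness: although $u_k\in W^{1,p}(A;\Rn)$, the limit is only in $GSBD^p(A)$ and may carry a genuine jump set, so one must carefully interpret the level sets of $v_k$ as incipient ``cracks'' and invoke the delicate $GSBD^p$ compactness underlying Proposition~\ref{prop:compattezza} and developed in \cite{CC18}; by contrast, once Theorem~\ref{teo:main} is available, the fidelity term is essentially a continuous perturbation and the $\Gamma$-convergence part is mostly bookkeeping on top of Theorem~\ref{teo:gammaconvG}.
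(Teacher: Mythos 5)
Your proposal is correct and follows essentially the same route as the paper: $\Gamma$-liminf from Theorem~\ref{teo:gammaconvG} plus Fatou, $\Gamma$-limsup from Theorem~\ref{teo:main} (using \eqref{5main} and $\psi(|u|)\in L^1$ deduced via \eqref{hppsi}) combined with the Ambrosio--Tortorelli recovery construction and a diagonal argument, and compactness of quasi-minimisers from Proposition~\ref{prop:compattezza}. The only differences are expository: you spell out the Pratt-type generalised dominated convergence step for $\int_A\psi(|u_m-g|)\dx$ and prove by hand the standard fact about convergence of quasi-minimisers that the paper simply cites as \cite[Corollary~7.20]{DMLibro}.
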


\begin{proof} The $\Gamma$-$\liminf$ inequality follows by Theorem~\ref{teo:gammaconvG} (or by \eqref{eqs:0407170004}, which are the relevant properties here), and by Fatou lemma, that implies 
\[
\int \limits_A \psi(|u-g|)\dx\leq \liminf_{k\to \infty} \int \limits_A \psi(|u_k-g|)\dx\,,
\]
when $u_k\to u$ in measure in $A$.

As for the $\Gamma$-$\limsup$ inequality, let us fix $u\in GSBD^p(A)$ such that $\psi(|u-g|)\in L^1(A)$. Since $\psi(|g|)\in L^1(A)$, then $\psi(|u|)\in L^1(A)$, and by \eqref{eqs:main} there exist $u_k \in SBV(A;\Rn)\cap L^\infty(A;\Rn)$ such that \[F^A(u_k,1)\to F^A(u,1)\,.\]
Notice that we have used also \eqref{5main}, which was not necessary for the case without fidelity term.
The proof now follows as in Theorem~\ref{teo:gammaconvG}.

It lasts to prove the sequential compactness of $\gamma_k$-minimisers for $F_k^A$. This is a consequence of \cite[Corollary~7.20]{DMLibro} and of Proposition~\ref{prop:compattezza} below.
\end{proof}

\begin{remark}\label{rem:exappmin}
If $A$ is a Lipschitz domain then every $F_k^A$ admits a minimiser.
First 
we have that
\begin{equation}\label{2507171721}
\Big(\|\nabla u\|_{L^p(A;\mathbb{M}^{n\times n})}^p + \int_A \big(\psi(|u|)\wedge |u|\big) \dx \Big) + \|v\|_{W^{1,q}(A)} \leq C\,,
\end{equation}
with $C >0$ independent of $u$, $v$ such that $F_k^A(u,v)<M$, for a given $M>0$.
Indeed, \eqref{hppsi} and Korn's Inequality, that holds since $A$ is Lipschitz, imply \eqref{2507171721} with $\nabla (u-a)$ in place of $\nabla u$, for a suitable $a\colon A \to \Rn$ affine, such that $\int_A |u-a|^p \dx \leq \ol C\, F_k^A(u,v)$, for $\ol C>1$ depending on $p$, $A$, and on $c_2$ in \eqref{2607172253}. In view of \eqref{hppsi},
\begin{equation*}
\begin{split}
\int \limits_A \psi(|a|)\dx&\leq C_\psi \int \limits_A \psi(|u|)\dx + C_\psi \int \limits_A \psi(|u-a|)\dx \leq (C_\psi)^2\, F_k^A(u,v) + C_\psi\, \int \limits_A \psi(|g|) \\
&\quad+ (C_\psi)^2 \Big(|A|+ \int \limits_A |u-a|^p\dx\Big) \leq (C_\psi)^2 \ol{C}\, F_k^A(u,v) + \widetilde{c}(|A|,C_\psi,g)\,,
\end{split}
\end{equation*}
and by \cite[Lemma~2.3]{FriSol16} this gives a bound for $\nabla a$ in $A$, so that we conclude \eqref{2507171721}.
Now, the sum $\|\nabla u\|_{L^p(A;\mathbb{M}^{n\times n})} + \int_A \big(\psi(|u|)\wedge |u|\big) \dx$ is a norm on $W^{1,p}(\Omega;\Rn)$ equivalent to the standard norm, as one can verify by using Poincaré-Wirtinger inequality (we use $A$ Lipschitz also here). The existence of minimisers follows now from the Direct Method of Calculus of Variations (recall the properties of $W$ and Ioffe-Olech semicontinuity theorem, see \cite[Theorem~2.3.1]{ButLibro}).
\end{remark}

The compactness of (quasi-)minimisers for $F_k^A$ is obtained arguing similarly to \cite[Theorem~11.1]{DM13} and \cite[Proposition~1]{Iur14}.
A similar result is proven in \cite[Proposition~1]{Iur14}, assuming $\psi(s)=s^2$ and $g\in L^2(A;\Rn)$, and so  a uniform bound for displacements in $L^2(A;\Rn)$. 
\begin{proposition}\label{prop:compattezza}
Let $(u_k,v_k)$ be a sequence such that $F_k^A(u_k,v_k)$ is bounded. Then $v_k \to 1$ in $L^1(A)$ and, up to a subsequence, $u_k$ converge in measure to a suitable $u\in GSBD^p(A)$, with $\psi(|u|)\in L^1(A)$. 
\end{proposition}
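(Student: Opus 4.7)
The plan is to reduce the statement to the $GSBD^p$ compactness theorem \cite[Theorem~11.3]{DM13} by truncating each $u_k$ on a carefully chosen superlevel set of $v_k$.

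From $F_k^A(u_k,v_k)\leq M$ and \eqref{2607172253}, the a priori bounds
\begin{equation*}
\int_A v_k|e(u_k)|^p\dx,\quad \int_A \tfrac{d(v_k)}{\varepsilon_k}\dx,\quad \int_A a\varepsilon_k^{q-1}|\nabla v_k|^q\dx,\quad \int_A\psi(|u_k|)\dx\ \leq\ C
\end{equation*}
hold, the last by \eqref{hppsi} and $\psi(|g|)\in L^1(A)$ through $\psi(|u_k|)\leq C_\psi(\psi(|u_k-g|)+\psi(|g|))$. The second bound forces $d(v_k)\to 0$ in $L^1(A)$ and hence in measure; combined with the continuity of $d$ and its strict positivity on $[0,1)$, this yields $v_k\to 1$ in measure and then in $L^1(A)$ by dominated convergence. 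Young's inequality applied to $(a\varepsilon_k^{q-1})^{1/q}|\nabla v_k|$ and $(d(v_k)/\varepsilon_k)^{1/q'}$, using $(q-1)/q=1/q'$, gives
\begin{equation*}
a^{1/q}\,d(v_k)^{1/q'}|\nabla v_k|\ \leq\ \frac{a\varepsilon_k^{q-1}|\nabla v_k|^q}{q}+\frac{d(v_k)}{q'\varepsilon_k}.
\end{equation*}
Setting $\Phi(s):=\int_0^s d(t)^{1/q'}\,\mathrm{d}t$, this bounds $\Phi(v_k)$ in $BV(A)$, and the coarea formula yields $\int_0^1 d(s)^{1/q'}\hn(\{v_k=s\})\,\mathrm{d}s\le C$, so by Fatou's lemma there exists $s_0\in(0,1)$ with $d(s_0)>0$ and, along a subsequence not relabelled, $\hn(\{v_k=s_0\})\le L$.

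Set $E_k:=\{v_k\ge s_0\}$ and $\tilde u_k:=u_k\chi_{E_k}$. Since $u_k\in W^{1,p}(A;\Rn)$ and $E_k$ has finite perimeter, $\tilde u_k\in GSBD^p(A)$ with $e(\tilde u_k)=e(u_k)\chi_{E_k}$, $J_{\tilde u_k}\subseteq\partial^*E_k$, and
\begin{equation*}
\|e(\tilde u_k)\|_{L^p}^p\le s_0^{-1}\hspace{-0.5em}\int_A v_k|e(u_k)|^p\dx\le C,\quad \hn(J_{\tilde u_k})\le L,\quad \int_A\psi(|\tilde u_k|)\dx\le\int_A\psi(|u_k|)\dx\le C.
\end{equation*}
Because $\psi$ is continuous, increasing and unbounded, \cite[Theorem~11.3]{DM13} supplies a further subsequence and some $u\in GSBD^p(A)$ with $\tilde u_k\to u$ in measure. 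Since $v_k\to 1$ in measure forces $\mathcal{L}^n(A\setminus E_k)=\mathcal{L}^n(\{v_k<s_0\})\to 0$, we obtain $u_k-\tilde u_k\to 0$ in measure and hence $u_k\to u$ in measure; Fatou's lemma applied to $\psi(|u_k|)$ along a pointwise a.e.\ convergent subsequence gives $\int_A\psi(|u|)\dx\le C$.

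The main technical step is the selection of $s_0$: the Modica--Mortola style Young inequality is needed to upgrade the $(q,q')$-energy bound to a $BV$ estimate on $\Phi(v_k)$, after which coarea provides a level of controlled perimeter; at the same time $s_0$ must be taken with $d(s_0)>0$, so that $v_k\to 1$ in measure guarantees that the discarded set $A\setminus E_k$ shrinks and the convergence of $\tilde u_k$ transfers to $u_k$.
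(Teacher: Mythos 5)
Your argument is correct, but it follows a genuinely different route from the paper's. The paper proves this proposition by re-running the slicing compactness machinery of \cite[Theorem~11.1]{DM13} and \cite[Proposition~1]{Iur14}: a Fubini--Chebyshev selection of slices with bounded one-dimensional energy, truncations $\tau_\mu$, an auxiliary subadditive function $g$ calibrated against $\psi$, estimates on translates of $\phi(v_k)w_\delta$ with $\phi(s)=\int_0^s d(t)^{1/q'}\,\mathrm{d}t$, and finally \cite[Lemma~10.7]{DM13} to get convergence in measure of $\phi(v_k)u_k$, hence of $u_k$. You instead bypass this by the Modica--Mortola device: the Young-inequality/coarea bound on $\Phi(v_k)$ yields, via Fatou, a level $s_0\in(0,1)$ and a subsequence along which $E_k=\{v_k\ge s_0\}$ has uniformly bounded perimeter; truncating to $\tilde u_k=u_k\chi_{E_k}$ gives uniform bounds on $\|e(\tilde u_k)\|_{L^p}$, $\hn(J_{\tilde u_k})$ and $\int_A\psi(|\tilde u_k|)\dx$ (the last available precisely because of the fidelity term), so the black-box compactness \cite[Theorem~11.3]{DM13} applies, and $\mathcal L^n(A\sm E_k)\to0$ transfers the convergence to $u_k$. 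This is in fact the same strategy the paper uses later for Theorem~\ref{thm:compmin}, where \cite[Theorem~1.1]{CC18} replaces \cite[Theorem~11.3]{DM13} because no $\psi$-bound is available there; note that the truncation is unavoidable in either approach, since only $\int_A v_k|e(u_k)|^p\dx$ is controlled and $v_k$ may be as small as $\eta_k$. What each approach buys: yours is shorter and reduces to a ready-made theorem, at the (modest) price of justifying that $u_k\chi_{E_k}\in GSBD^p(A)$ with $e(u_k\chi_{E_k})=e(u_k)\chi_{E_k}$ and $J_{u_k\chi_{E_k}}\subseteq\partial^*E_k$ up to $\hn$-negligible sets (a slicing argument, since $u_k\chi_{E_k}$ need not be $SBV$ for lack of an integrable trace on $\partial^*E_k$; the paper itself uses this fact without comment in Theorem~\ref{thm:compmin}), and of fixing $s_0$ and the subsequence before invoking compactness ($s_0$ should also be chosen among the a.e.\ levels where $\{v_k=s_0\}$ is $\mathcal L^n$-null, or one simply takes $E_k=\{v_k>s_0\}$); the paper's argument is self-contained at the level of one-dimensional slices and avoids introducing the auxiliary jump along $\partial^*E_k$, producing the limit of $\phi(v_k)u_k$ directly. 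Your deduction of $u\in GSBD^p(A)$ should cite the weak $L^1$ convergence of $e(\tilde u_k)$ together with the uniform $L^p$ bound and the lower semicontinuity of the jump measure in \cite[Theorem~11.3]{DM13}, but this is routine; the final Fatou step for $\psi(|u|)\in L^1(A)$ coincides with the paper's.
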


\begin{proof}
The first part of the proof is similar to the beginning of \cite[Proposition~1]{Iur14}. 

It is immediate that $v_k\to 1$ in $L^1(A)$.
Let us fix $k\in\N$ and $\xi\in \Sn$. For simplicity of notation, we omit to write the dependence on $k$ and $\xi$ of the objects introduced in the following. We still write $u_k$ and $v_k$ to avoid confusion with the limit functions.
Let
\begin{align*}
\hat{A}_\lambda:=\Big\{y\in \Pi^\xi\colon &\int \limits_{A^\xi_y} \bigg((v_k)^\xi_y \,|\nabla ((\hat{u}_k)^\xi_y)|^p + \frac{d((v_k)^\xi_y)}{\varepsilon_k}+a\varepsilon_k^{q-1}\,|\nabla((v_k)^\xi_y)|^q\bigg)\,\mathrm{d}t\leq \lambda\Big\}\,,\\
& A_\lambda:=\{x \in A\colon \Pi^\xi(x)\in \hat{A}_\lambda\},\qquad B_\lambda:=A\setminus A_\lambda\,,
\end{align*}
where $\Pi^\xi(x)$ is the projection of $x$ on the plane $\Pi^\xi$.
Being $F_k(u_k,v_k)$ bounded, by Fubini's Theorem and Chebychev inequality we have
\begin{equation*}
\mathcal{L}^n(B_\lambda)\leq c  \,\frac{\mathrm{diam}(A)}{\lambda}\,.
\end{equation*}
Let $\tau_\mu(s):= -\mu\vee s \wedge \mu$,
\begin{equation*}
w_\mu^\lambda:=
\begin{cases}
\tau_\mu (u_k \cdot \xi)  &\quad\text{ in }A_\lambda\,,\\
0 &\quad\text{ in }B_\lambda\,,
\end{cases}
\end{equation*}
and let $g\colon [0,\infty)\to [0,\infty)$ be nondecreasing, continuous, subadditive, such that 
\begin{equation*}
g(0)=0\,,\qquad \liminf_{s\to 0^+}\frac{g(s)}{s}>0\,,\qquad g(s)\leq s \,\text{ for }s\in [0,\infty)\,,\qquad\lim_{s\to \infty}\frac{\psi(s)}{g(s)}=+\infty\,.
\end{equation*}
Therefore, following exactly \cite[inequality (11.8)]{DM13} we get that for every $\delta>0$ there exist $\mu_\delta >0$, $\lambda_\delta>0$ such that
\begin{equation*}
\int \limits_A g(|u_k\cdot \xi- w_{\mu_\delta}^{\lambda_\delta}|)\dx < \delta\,,
\end{equation*}
and then
\begin{equation}\label{0507171906}
\int \limits_A g(|\phi(v_k) (u_k\cdot \xi-\, w_\delta)|)\dx < \delta\,,
\end{equation}
for $w_\delta:=w_{\mu_\delta}^{\lambda_\delta}$ and $\phi(s):=\int_0^s d(s)^{1/q'}(t) \,\mathrm{d}t$, since $\phi(v_k)\leq c\, v_k\leq c$. (It is enough to redefine $\delta$ as $\tilde{c}\,\delta$, for a suitable $\tilde{c}$.)
Notice that here we use the fact that $\psi(|u_k|)$ are equibounded in $L^1(A)$, which follows since $F_k(u_k,v_k)$ are equibounded. 

Repeating the same computations done in \cite[Proposition~1]{Iur14} to get (84) therein, we obtain that for every $\delta>0$
\begin{equation}\label{0507171909}
\int \limits_\R |(\phi(v_k) w_\delta)^\xi_y(t+h) - (\phi(v_k) w_\delta)^\xi_y(t)| \,\mathrm{d}t \leq c(\delta)\,h\,. 
\end{equation}
By \eqref{0507171906} and \eqref{0507171909} we are in the hypotheses of \cite[Lemma~10.7]{DM13}, which gives that 
$\phi(v_k)\, u_k$ converge (up to a subsequence, not relabelled) to some $\tilde{u}$ pointwise $\mathcal{L}^n$-a.e.\ in $A$, or also in measure. Since $v_k\to 1$ in $L^1(A)$, we obtain that $u_k$ converge to $u:=\frac{\tilde{u}}{\phi(1)}$ in measure.
As in the proof of Theorem~\ref{teo:gammaconvG} $u\in GSBD^p(A)$, and by Fatou inequality $\psi(|u|)\in L^1(A)$.
\end{proof}

We now consider the Dirichlet problem for the brittle fracture energy.  We give some conditions on the Dirichlet part of the boundary.

Let $\Omega\subset \Rn$ be an open, bounded, 
Lipschitz domain for which \[\dom=\dod\cup \don\cup N\,,\] with $\dod$ and $\don$ relatively open, $\dod \cap \don =\emptyset$, $\mathcal{H}^{n{-}1}(N)=0$, 
$\dod \neq \emptyset$, and $\partial(\dod)=\partial(\don)$. 
Assume that $\dod$ satisfies the following condition: there exist a small $\ol \delta$ and $x_0\in \Rn$ such that for every $\delta \in (0,\ol \delta)$ 
\begin{equation}\label{0807170103}
O_{\delta,x_0}(\dod) \subset \Omega\,,
\end{equation}
where $O_{\delta,x_0}(x):=x_0+(1-\delta)(x-x_0)$.
Let us define, for $u_0\in W^{1,p}(\Rn;\Rn)$, the sets
\begin{equation*}
\begin{split}
W^{1,p}_{u_0}(\Omega;\Rn)&:=\{u\in W^{1,p}(\Omega;\Rn)\colon \mathrm{tr}_\Omega\, u =\mathrm{tr}_\Omega\, u_0 \text{ on }\dod\}\,,\\
V_k^1&:=\{v\in V_k^\Omega\colon \mathrm{tr}_\Omega\, v=1 \text{ on }\dod\}\,.
\end{split}
\end{equation*}

For a given $u_0\in W^{1,p}(\Rn;\Rn)$, the \emph{generalised Griffith energy with Dirichlet boundary condition} $u_0$ is defined for measurable functions $u\colon \Omega\to \Rn$ and $v\colon \Omega\to [0,1]$ by
\begin{equation*}
D(u,v):=G^\Omega(u,v)+\alpha\hn(\dod \cap \{\mathrm{tr}_\Omega \, u \neq \mathrm{tr}_\Omega\, u_0\})\,,
\end{equation*}
and its approximating energies by
\begin{equation*}
D_k(u,v):=
 \begin{dcases}
\int \limits_\Omega \Big(W(v,e(u))+\frac{d(v)}{\varepsilon_k}+a\,\varepsilon_k^{q-1} |\nabla v|^q \Big)\dx \quad &\text{in }W^{1,p}_{u_0}(\Omega;\Rn)\times V_k^1\,,\\
+\infty &\text{otherwise,}
\end{dcases}
\end{equation*}
namely $D_k$ is the sum of $G_k^\Omega$ and the characteristic function of $W^{1,p}_{u_0}(\Omega;\Rn)\times V_k^1$. 

Thanks to Theorem~\ref{teo:main} we can prove the following general $\Gamma$-convergence result.
\begin{theorem}\label{teo:gammaconvD}
Under the assumptions above, $D_k$ $\Gamma$-converge to $D$ with respect to the topology of the convergence in measure for $u$ and $v$.
\end{theorem}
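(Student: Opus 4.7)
The plan is to reduce both inequalities to the unconstrained $\Gamma$-convergence of Theorem~\ref{teo:gammaconvG}, by realising the Dirichlet penalty $\alpha\hn(\dod\cap\{\tr u\neq\tr u_0\})$ as an interior jump energy in a slightly enlarged Lipschitz domain $\Omega^\star\supset\ol\Omega$. For any admissible pair, extend to $\widetilde u:=u\chi_\Omega+u_0\chi_{\Omega^\star\sm\ol\Omega}$ and $\widetilde v:=v\chi_\Omega+\chi_{\Omega^\star\sm\ol\Omega}$: since the relevant traces on $\dod$ agree by admissibility, no jump is created on $\dod$, while the jump produced on $\don$ is compensated by the $u_0$-bulk energy on $\Omega^\star\sm\ol\Omega$ when $\Omega^\star$ is squeezed along suitable sequences onto a thin collar of $\dod$. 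In this way $D_k(u,v)$ and $D(u,v)$ coincide, modulo a constant depending only on $u_0$, with $G_k^{\Omega^\star}(\widetilde u,\widetilde v)$ and $G^{\Omega^\star}(\widetilde u,\widetilde v)$.

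For the $\Gamma$-liminf, given $(u_k,v_k)\to(u,v)$ in measure with $\sup_k D_k(u_k,v_k)<\infty$, the extensions $(\widetilde u_k,\widetilde v_k)$ converge in measure on $\Omega^\star$, so Theorem~\ref{teo:gammaconvG} yields $v=1$ a.e., $u\in GSBD^p(\Omega)$ and $G^{\Omega^\star}(\widetilde u,1)\leq \liminf_k G_k^{\Omega^\star}(\widetilde u_k,\widetilde v_k)$. Since the interior traces of $u_k$ on $\dod$ equal $\tr u_0$, the jump set of $\widetilde u_k$ across $\dod$ is $\hn$-null; in the limit, $J_{\widetilde u}\cap \dod$ equals exactly $\dod\cap\{\tr u\neq \tr u_0\}$, yielding the liminf estimate after subtracting the $u_0$-bulk contribution common to both sides.

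For the $\Gamma$-limsup, fix $u$ with $D(u,1)<\infty$, and extend to $\widetilde u\in GSBD^p(\Omega^\star)$ as above, whose jump contains $J_u\cup(\dod\cap\{\tr u\neq\tr u_0\})$. The essential use of \eqref{0807170103} is the pullback $u^\delta(x):=\widetilde u(x_0+(1-\delta)(x-x_0))$: the condition $O_{\delta,x_0}(\dod)\subset \Omega$ produces a uniform collar $C_\delta\subset \Omega$ adjacent to $\dod$ in which $u^\delta$ is obtained from strictly interior values of $u$, so a smooth cutoff interpolation between $u^\delta$ and $u_0$ is possible in $C_\delta$ without creating new jumps and with vanishing bulk cost as $\delta\to 0$. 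Apply Theorem~\ref{teo:main} to $u^\delta$ on $\Omega^\star$ to obtain $u^\delta_m\in SBV^p(\Omega^\star;\Rn)\cap L^\infty$ with closed $C^1$-piecewise jump and satisfying \eqref{eqs:main}, including the truncated trace convergence \eqref{4main}, which delivers $\tr u^\delta_m\to u_0$ in the prescribed sense on $\dod$. Redefine $u^\delta_m$ to equal $u_0$ exactly on $\dod$ via the cutoff in $C_\delta$, paying only asymptotically vanishing bulk penalty. Around the closed $C^1$ jump set, insert the optimal one-dimensional Ambrosio--Tortorelli profile $v^\delta_{m,k}$ (the minimiser of the reduced profile for $s\mapsto d(s)/\varepsilon_k+a\varepsilon_k^{q-1}|s'|^q$) on a tube of width $O(\varepsilon_k)$, and set $v^\delta_{m,k}\equiv 1$ elsewhere; in particular on $\dod$, so the pair lies in $W^{1,p}_{u_0}\times V^1_k$. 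A standard diagonalisation in $(k,m,\delta)$ furnishes a recovery sequence with $\limsup D_k\leq D(u,1)$.

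The main obstacle is precisely this boundary matching step: Theorem~\ref{teo:main} produces approximants whose traces match $u_0$ on $\dod$ only asymptotically, while the Ambrosio--Tortorelli phase field equals $1$ only outside the jump tube, whereas admissibility in $D_k$ requires pointwise equalities on $\dod$. Hypothesis \eqref{0807170103} is tailored to overcome this: the homothety $O_{\delta,x_0}$ carves a uniform collar in $\Omega$ along $\dod$ inside which the (pulled-back) smooth approximation of $u$ coincides with interior values, so the displacement can be cutoff-interpolated with $u_0$ and the phase field can be forced to $1$, without generating spurious surface energy beyond the target contribution $\alpha\hn(\dod\cap\{\tr u\neq \tr u_0\})$.
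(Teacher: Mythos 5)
Your overall outline (reduce to Theorem~\ref{teo:gammaconvG} by extending with $u_0$, exploit \eqref{0807170103}, then density theorem plus standard Ambrosio--Tortorelli recovery and a diagonal argument) is the paper's, but two steps are genuinely flawed. First, in the liminf reduction you take $\Omega^\star\supset\ol\Omega$, i.e.\ the extension by $u_0$ crosses $\don$ as well. Then $\widetilde u_k=u_k\chi_\Omega+u_0\chi_{\Omega^\star\sm\ol\Omega}$ is in general \emph{not} in $W^{1,p}(\Omega^\star;\Rn)$ (and $\widetilde v_k\notin W^{1,q}$), because nothing forces the traces of $u_k$ and $u_0$ (resp.\ of $v_k$ and $1$) to match on $\don$; hence $G_k^{\Omega^\star}(\widetilde u_k,\widetilde v_k)=+\infty$ and the claimed identity ``$D_k$ equals $G_k^{\Omega^\star}$ up to a constant'' fails, while in the limit $G^{\Omega^\star}(\widetilde u,1)$ carries the extra uncompensated term $\alpha\hn(\don\cap\{\tr u\neq\tr u_0\})$, which no bulk energy of $u_0$ and no ``squeezing'' of a fixed surface can cancel. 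The correct choice (as in the paper) is an enlargement $\widetilde\Omega$ with $\widetilde\Omega\cap\partial\Omega=\dod$, so that the gluing only happens across the Dirichlet part, where admissibility makes the extension Sobolev.

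Second, and more seriously, your limsup construction does not work. You compose with the contraction, $u^\delta=\widetilde u\circ O_{\delta,x_0}$, so near $\dod$ the function takes \emph{interior} values of $u$, whose trace has no reason to be close to $\tr u_0$; the mismatch set $\dod\cap\{\tr u\neq\tr u_0\}$ is not transported into $\Omega$ as an interior jump but is simply lost. Enforcing the datum by a cutoff interpolation $\varphi u_0+(1-\varphi)u^\delta$ in a collar $C_\delta$ of width $\sim\delta$ then costs a bulk term of order $\delta^{-p}\int_{C_\delta}|u^\delta-u_0|^p\dx\sim\delta^{1-p}\|\tr u-\tr u_0\|^p_{L^p(\dod)}$, which blows up for $p>1$ whenever the traces differ on a set of positive $\hn$-measure (and is not even defined when $u\notin L^p$ near $\dod$, which is allowed in $GSBD^p$); in particular it can never reproduce the surface penalty $\alpha\hn(\dod\cap\{\tr u\neq\tr u_0\})$. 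The paper's use of \eqref{0807170103} is the opposite composition, $\widetilde u\circ(O_{\delta,x_0})^{-1}+u_0-u_0\circ(O_{\delta,x_0})^{-1}$: the contraction of the boundary mismatch becomes an interior jump of comparable $\hn$-measure, the function is \emph{exactly} $u_0$ in a neighbourhood of $\dod$ (no interpolation, only the harmless $W^{1,p}$-small correction $u_0-u_0\circ(O_{\delta,x_0})^{-1}$), and only then are Theorem~\ref{teo:main} and the classical recovery construction applied. A further point you skip: before the homothety one must handle $\don$ with the reflection Lemma~\ref{le:Nitsche} (extending $u$, not $u_0$, across $\don$ away from $\partial(\dod)$), since \eqref{0807170103} gives no control on where $\don$ and a jump created there would be mapped, and a crude extension by $u_0$ across $\don$ can inject spurious surface energy into the limsup bound.
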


\begin{proof}[Proof of Theorem~\ref{teo:gammaconvD}]
The $\Gamma$-$\liminf$ inequality follows by that one for $G_k^A$. Indeed, let $\widetilde{\Omega}\subset \Rn$ be open such that $\Omega\subset \widetilde{\Omega}$ and $\widetilde{\Omega}\cap \partial \Omega= \dod$, and define for each $u$ and $v$ their extensions 
\begin{equation*}
\tilde{u}:=\begin{cases}
u\quad &\text{in }\Omega\,,\\
u_0 \quad &\text{in }\widetilde{\Omega}\setminus \Omega \,,
\end{cases}
\qquad\qquad
\tilde{v}:=\begin{cases}
v\quad &\text{in }\Omega\,,\\
1\quad &\text{in }\widetilde{\Omega}\setminus \Omega\,.
\end{cases}
\end{equation*}
If $u_k$ converge in measure to some $u \in GSBD^p(\Omega)$, then $\tilde{u}_k$ converge to $\tilde{u} \in GSBD^p(\widetilde{\Omega})$.
Moreover, since $u_0\in W^{1,p}(\Omega;\Rn)$,
\[
D_k(u,v)= G_k^{\widetilde{\Omega}}(\tilde{u}, \tilde{v})-\int \limits_{\widetilde{\Omega}\setminus \Omega} W(1,e(u_0))\dx\,,\qquad D(u,v)= G^{\widetilde{\Omega}}(\tilde{u}, \tilde{v})-\int \limits_{\widetilde{\Omega}\setminus \Omega} W(1,e(u_0))\dx \,.
\]
Therefore Theorem~\ref{teo:gammaconvG} implies the $\Gamma$-$\liminf$ inequality for $D$.

We now prove the $\Gamma$-$\limsup$ inequality. Let us fix $u\in  GSBD^p(\Omega)$.
The goal is to prove that for every small $\eta>0$ (in no context with $\eta_k$) there exists $u^\eta \in SBV^p(\Omega;\Rn) \cap L^\infty(\Omega;\Rn)$ such that $u^\eta=u_0$ in the intersection of $\ol \Omega$ with a $n$-dimensional neighbourhood of $\dod$ and
\begin{equation}\label{0807170115}
D(u^\eta,1) < D(u,1) + \eta\,.
\end{equation}
Indeed, with such $u^\eta$ at hand, one may apply the standard construction for recovery sequences of Ambrosio-Tortorelli type (cf.\ for instance \cite[Theorem~9]{Iur14}), which leaves each approximating function equal to $u_0$ in a neighbourhood of $\dod$ (in the topology of $\ol \Omega$), in particular with the right boundary datum. Then the $\Gamma$-$\limsup$ inequality follows by a diagonal argument.
Thus, let us fix $\eta>0$ and construct $u^\eta$.

Since $\Sigma:=\partial(\dod)=\partial(\don)$ has null $\mathcal{H}^{n{-}1}$ measure, for any $\varepsilon>0$ (in no context with $\varepsilon_k$) there exists a $n$-dimensional neighbourhood $\widetilde{\Sigma}$ of $\Sigma$ with $\mathcal{L}^n(\widetilde{\Sigma})< \varepsilon$ and 
\begin{equation}\label{0807170023}
\hn(\dom \cap \widetilde{\Sigma}) <\varepsilon\,.
\end{equation}
We now argue as done to get \eqref{eqs:2405171202} and \eqref{eqs:0707172255} with the role of $J_u$ and $\partial\Omega$ therein played by $\don \setminus \widetilde{\Sigma}$. For any $\varepsilon$ we obtain a finite set of cubes $(Q_{h, N})_{h=1}^{h^N}$ of centers $x_{h,N}$ and sidelength $\varrho_{h,N}$, whose closures are pairwise disjoint, such that the analogous of \eqref{eqs:0707172255} hold, with the subscripts $_{h,0}$ replaced by $_{h,N}$ and $\ol h$ by $h^N$.
We introduce the rectangles 
\begin{equation*}
R_{h,N}:=\Big\{x_{h,N}+\sum_{i=1}^{n-1} y_i\, b_{h,i}^N+y_n\, \nu_{h,N} \colon y_i\in (-\varrho_{h,N},\varrho_{h,N}),\, y_n \in (-3\varepsilon \varrho_{h,N}-t, -\varepsilon \varrho_{h,N}) \Big\}\,,
\end{equation*}
\begin{equation*}
R'_{h,N}:=\Big\{x_{h,N}+\sum_{i=1}^{n-1} y_i\, b_{h,i}^N+y_n\, \nu_{h,N} \colon y_i\in (-\varrho_{h,N},\varrho_{h,N}),\, y_n \in (-\varepsilon \varrho_{h,N}, \varepsilon \varrho_{h,N}+t) \Big\}\,,
\end{equation*}
and $\widehat{R}_{h,N}:=R_{h,N} \cup R'_{h,N}$, with $t>0$ small, $\nu_{h,N}=-\nu_{\Omega}(x_{h,N})$ the generalised outer normal to $\Omega$ at $x_{h,N}$, and
$(b_{h,i}^N)_{i=1}^{n-1}$ an orthonormal basis of $(\nu_{h,N})^\perp$. Moreover, let $\widehat{u}_{h,N}\in GSBD^p(\widehat{R}_{h,N})$ be the functions provided by Lemma~\ref{le:Nitsche} for which the analogous of \eqref{eqs:2405171953} hold.
Let $\Omega_t:=\Omega+B(0,t)$ and $\widetilde{u}\in GSBD^p(\Omega_t)$ be defined by
\begin{equation*}
\widetilde{u}:=
\begin{cases}
u\quad&\text{in }\Omega\,;\\
\widehat{u}_{h,N} \quad&\text{in } \widehat{R}_{h,N}\,;\\
u_0  \quad&\text{elsewhere in } \Omega_t \,.
\end{cases}
\end{equation*}
We claim that 
\begin{equation}\label{0807170042}
G^{\Omega_t}(\widetilde{u},1)<G(u,1)+\eta\,,
\end{equation}
for $\varepsilon$ and $t$ small enough. Indeed, it is enough to observe that, for $\varepsilon$ and $t$ small enough, 
\begin{equation*}
\int \limits_{\Omega_t\setminus \Omega} |e(u_0)|^p \dx <\eta\,,\qquad \sum_h \int \limits_{\widehat{R}_{h,N}} |e(\widehat{u}_{h,N})|^p\dx \leq C \int \limits_{R_{h,N}} |e(u)|^p\dx <\eta \,,
\end{equation*}
by the absolute continuity of the integral, and
\begin{equation*}
\hn(J_{\widetilde{u}}) < \hn(J_u)  + c\,\hn\Big(J_u\cap \bigcup_{h=1}^N R_{h,N}\Big)  + \hn\Big(\don \setminus \bigcup_h Q_{h,N}\Big) < \hn(J_u) + \eta \,,
\end{equation*}
by  Lemma~\ref{le:Nitsche},  \eqref{0807170023} and the analogous of \eqref{2105171956},  arguing as in Theorem~\ref{teo:main} to get \eqref{3main}. 

Let us consider the functions $\widetilde{u}^\delta:=\widetilde{u}\circ (O_{\delta,x_0})^{-1}+ u_0 - u_0 \circ (O_{\delta,x_0})^{-1}$.
By \eqref{0807170103} and the definition of $\widetilde{u}$, $\widetilde{u}^\delta=u_0$ in a neighbourhood of $\dod$. Moreover, by \eqref{0807170042} and since for $\delta$ small $\int \limits_{\Rn} |e(u_0)-e(u_0 \circ (O_{\delta,x_0})^{-1})|^p \dx < \eta$, we have for $\delta$ small enough that
\begin{equation}\label{0807170114}
D(\widetilde{u}^\delta,1) < D(u,1) + \eta\,.
\end{equation}
We obtain $u^\eta$ by applying the construction of Theorem~\ref{teo:main} starting from a fixed $\widetilde{u}^\delta$ satisfying \eqref{0807170114}: since $u_0$ does not jump, we have that the $k$-th approximating function for $\widetilde{u}^\delta$ is $u_0 \ast \varrho_k$ in a neighbourhood of $\dod$. Then it is enough to correct it by adding $u_0 - u_0 \ast \varrho_k$, which is small in $W^{1,p}$ norm for $k$ large. Therefore, the approximation properties of Theorem~\ref{teo:main} and \eqref{0807170114} give \eqref{0807170115}. This concludes the proof.
\end{proof}

\begin{remark}
The main difficulty without the geometrical assumptions on $\dod$ of Theorem~\ref{teo:gammaconvD} is to correct the boundary datum after the composition with $(O_{\delta,x_0})^{-1}$ or after any convolution. Indeed, there could be some parts of $\dod$ which are brought outside $\Omega$ and replaced by $u$, so that the new trace on $\dod$ may differ too much from the trace of $u_0$ (the trace of $u$ on strips close to $\dod$ is not even in $W^{1-1/p,p}(\dod)$ in general), and there is an analogous problem with the convolution. In subsets of $\dod$ where the traces of $u$ and $u_0$ are different one could bring the jump a little bit inside $\Omega$, arguing as in \cite[Theorem~3.1]{CorToa99} keeping almost the same length, so almost the same energy. But as soon as there are zones where the traces of $u$ and $u_0$ coincide, one may increase very much the energy to fit the boundary condition. We refer to \cite{CarVG18} for a treatment of a Dirichlet boundary condition for $C^1$ domains, see Section~5 therein.
\end{remark}

 We now discuss the compactness for minimisers of the approximating functionals $D_k$. This follows from the compactness result \cite[Theorem~1.1]{CC18}, that we recall below for the reader's convenience. 
\begin{theorem}\label{thm:mainComp}
Let $\phi \colon \R^+ \to \R^+$ be a non-decreasing function with
\begin{equation}\label{eq:equiint}
\lim_{t\to +\infty} \frac{\phi(t)}{t}=+\infty\,,
\end{equation}
and let $(u_h)_h$ be a sequence in $GSBD(\Omega)$ such that
\begin{equation}\label{eq:boundGSBD}
\int\limits_\Omega \phi\big( |e(u_h)| \big) \dx + \hn(J_{u_h}) < M\,,
\end{equation}
for some constant $M$ independent of $h$. 
Then there exists a subsequence, still denoted by $(u_h)_h$, such that 
\begin{equation}\label{eq:defA}
A^\infty=\{x\in \Omega \colon\, |u_h(x)|\rightarrow +\infty\}
\end{equation} 
has finite perimeter, and $u\in GSBD(\Omega)$ with $u=0$ on $A^\infty$ for which
\begin{subequations}\label{1407180906}
\begin{align}
u_h \rightarrow u  \quad\quad\,&\mathcal{L}^n\text{-a.e.\ in } \Omega \sm A^\infty\,,\label{eq:convmisura}\\
e(u_h)  \rightharpoonup e(u) \quad &\text{in } L^1(\Omega\setminus A^\infty; \Mnn)\,,\label{eq:convGradSym}\\
\hn(J_u \cup \partial^* A^\infty )\leq \liminf_{h\to \infty} \,&\hn(J_{u_h})\,.\label{eq:sciSalto}
\end{align}
\end{subequations}
\end{theorem}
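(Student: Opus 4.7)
The statement is a $GSBD$ compactness theorem without any a priori $L^1$-type control on $u_h$ itself: the superlinear bound \eqref{eq:equiint} rules out concentration in $e(u_h)$ but does not prevent $u_h$ from diverging pointwise. My plan is to identify the blow-up set $A^\infty$, prove that it has finite perimeter, and then recover Dal Maso's compactness \cite[Theorem~11.3]{DM13} on its complement. After replacing $\phi$ by a larger convex superlinear function I may assume $\phi$ convex, so that $\phi(|e(u_h)|)$ is equi-integrable.

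I would first set up a local affine approximation at a dyadic scale $r > 0$: cover $\Omega$ by (slightly overlapping) cubes of sidelength $r$, and apply Proposition~\ref{prop:3CCF16} in each cube $Q$ to extract, for every $h$, an infinitesimal rigid motion $a_{Q,h}$ and an exceptional set $\omega_{Q,h}$ with $\mathcal{L}^n(\omega_{Q,h}) \leq C r\,\hn(J_{u_h} \cap Q)$, for which $u_h - a_{Q,h}$ is controlled on $Q\setminus\omega_{Q,h}$ in $L^{p}$ by $\int_Q |e(u_h)|^p \dx$. Cubes with $\hn(J_{u_h}\cap Q)>\theta r^{n-1}$ are labelled bad and occupy total volume $\lesssim r/\theta$ by \eqref{eq:boundGSBD}. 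Along a diagonal subsequence I would classify the good cubes by whether $\|a_{Q,h}\|_{L^\infty(Q)}\to\infty$ or stays bounded, let $A^\infty_r$ be the union of the divergent ones, and define $A^\infty$ as the $\hn$-essential limit of the $A^\infty_r$ as $r\to 0$; the pointwise description \eqref{eq:defA} then follows because outside $\bigcup_Q\omega_{Q,h}$ the functions $u_h$ and $a_{Q,h}$ differ by something going to $0$ in measure.

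The hard part is to bound $\hn(\partial^* A^\infty)$. Boundary faces of $A^\infty_r$ sit between a divergent good cube $Q$ and an adjacent convergent good cube $Q'$, but Lemma~\ref{le:diffaffini} bounds $\|a_{Q,h}-a_{Q',h}\|_{L^\infty(Q\cap Q')}^p$ by $r^{-(n-p)}\int_{Q\cup Q'}|e(u_h)|^p\dx$, so if $\hn(J_{u_h}\cap (Q\cup Q'))$ were much smaller than $\theta r^{n-1}$ the two affine approximations could not diverge at different rates; hence each such boundary face must carry a jump contribution of order $\theta r^{n-1}$, and summing over all boundary faces gives $\hn(\partial A^\infty_r)\leq C M/\theta$ uniformly in $r$. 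Standard lower semicontinuity of the perimeter then yields finite perimeter for $A^\infty$.

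Finally, on $\Omega\setminus A^\infty$ I would set $v_h := u_h - \sum_{Q \text{ good, convergent}} a_{Q,h}\,\chi_Q$. By construction $v_h$ is bounded in $L^1$ there, so \cite[Theorem~11.3]{DM13} yields a $GSBD$ limit; reassembling $u$ on $\Omega\setminus A^\infty$ and setting $u\equiv 0$ on $A^\infty$, while diagonalizing in $r\to 0$ and using Lemma~\ref{le:diffaffini} again to reconcile affine pieces at different scales, produces $u\in GSBD(\Omega)$ satisfying \eqref{eq:convmisura}. The weak $L^1$ convergence \eqref{eq:convGradSym} is then a consequence of the equi-integrability from \eqref{eq:equiint} via Dunford--Pettis, and \eqref{eq:sciSalto} follows by combining the standard $GSBD$ lower semicontinuity of the jump on $\Omega\setminus A^\infty$ with the perimeter estimate on $\partial^*A^\infty$, since the boundary of $A^\infty$ is generated precisely by jumps of large amplitude in the $u_h$.
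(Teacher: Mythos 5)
First, a point of context: this paper does not prove Theorem~\ref{thm:mainComp} at all; it is quoted verbatim from \cite[Theorem~1.1]{CC18} ``for the reader's convenience'', so there is no internal proof to compare against. That said, your skeleton (a grid of cubes at scale $r$, Proposition~\ref{prop:3CCF16} with $p$ taken so that the available bound applies, a good/bad cube dichotomy driven by $\hn(J_{u_h}\cap Q)\lessgtr\theta r^{n-1}$, rigidity of the difference of adjacent affine approximations via Lemma~\ref{le:diffaffini}, subtraction of the piecewise rigid part and Dal Maso's compactness \cite[Theorem~11.3]{DM13} on the convergent region, then a diagonal over $r\to0$) is genuinely in the spirit of the argument in \cite{CC18}. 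Two small but real slips: under \eqref{eq:boundGSBD} you only control $e(u_h)$ in $L^1$ with a superlinear weight, so Proposition~\ref{prop:3CCF16} and Lemma~\ref{le:diffaffini} must be used with $p=1$ (your $\int_Q|e(u_h)|^p\dx$ with $p>1$ is not available), and you should replace $\phi$ by a \emph{smaller} convex superlinear minorant (de la Vall\'ee Poussin), not a larger one. Also, $u_h-a_{Q,h}$ is only \emph{bounded} (in $L^{1^*}$ off $\omega_{Q,h}$), not infinitesimal in measure, so the identification \eqref{eq:defA} and the a.e.\ statements \eqref{eq:convmisura} require the diagonal argument in $r$ to be carried out, not just invoked; similarly the weak limit in \eqref{eq:convGradSym} must be identified with $e(u)$ (this does come out of \cite[Theorem~11.3]{DM13} applied to your $v_h$ on the convergent region, but you should say so, since a.e.\ convergence alone does not identify a weak $L^1$ limit).

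The genuine gap is \eqref{eq:sciSalto}. Your cube argument yields $\hn(\partial A^\infty_r)\leq C M/\theta$: it localises $\partial A^\infty_r$ inside bad cubes, whose total face area is only controlled by $\theta^{-1}\hn(J_{u_h})$. This proves finite perimeter of $A^\infty$, but the theorem asserts the sharp inequality $\hn(J_u\cup\partial^*A^\infty)\leq\liminf_h\hn(J_{u_h})$ with constant $1$, jointly for the jump of $u$ and the boundary of the blow-up set and without double counting. Your closing sentence (``the boundary of $A^\infty$ is generated precisely by jumps of large amplitude in the $u_h$'') is exactly the statement that has to be proved, and nothing in the cube construction delivers it: the loss of the factor $\theta^{-1}$ is structural in that approach. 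One needs a separate argument --- for instance slicing: for $\hn$-a.e.\ point of $\partial^*A^\infty$ and a.e.\ line through it, the one-dimensional sections $\widehat{(u_h)}{}^\xi_y$ pass from values converging to a finite limit to values diverging to infinity, hence must eventually contain a jump point of $J^1_{\widehat{(u_h)}{}^\xi_y}$ nearby; integrating over slices and combining with the analogous estimate on $J_u\cap(\Omega\setminus A^\infty)$, while keeping the two families of jump points disjoint, gives the constant-$1$ bound. This is the technical heart of \cite{CC18} and is missing from your proposal; without it you have compactness and finite perimeter of $A^\infty$, but not the lower semicontinuity statement \eqref{eq:sciSalto} that the applications (Theorem~\ref{thm:compmin}) actually use.
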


Starting from a minimising sequence for the Dirichlet problem, we obtain a pointwise limit that assumes the right Dirichlet boundary datum. By \eqref{eq:sciSalto}, minimisers are obtained by the functions defined as the pointwise limit of minimising sequences outside $A^\infty$, and 0, or any infinitesimal rigid motion, on $A^\infty$. 
Then we prove the following compactness result for minimisers of $D_k$: these converge pointwise to a minimiser of $D$ outside an exceptional set $A^\infty$, and there is convergence of the energy, in both the elastic and the crack part. We refer to the notation introduced above in this section.
\begin{theorem}\label{thm:compmin}
Let $(u_k,v_k)\in W^{1,p}_{u_0}(\Omega;\Rn){\times}V_k^1$ be minimisers of $D_k$
(or ``almost'' minimisers, up to an error $\zeta_k$ with $\zeta_k\to 0$).  Assume that $W$ is differentiable with respect to the first variable.  Then, for a subsequence $(u_h,v_h)$, we have that $v_h$ converges to 1 in $L^1(\Omega)$, the set $A^\infty:=\{ x\in \Omega\colon |u_h(x)| \to +\infty \}$ has finite perimeter, there exists $u\in GSBD^p(\Omega)$ minimiser of $D$ with $u=0$ in $A^\infty$, and $u_h \rightarrow u$ $\mathcal{L}^n$-a.e.\ in $\Omega\sm A^\infty$.
Moreover $\partial^*A^\infty \subset J_u$ and
\begin{subequations}\label{2501181456}
\begin{align}
\int \limits_{\Omega} W(1,e(u))\dx &= \lim_{h\to \infty} \int \limits_{\Omega} W(v_h, e(u_h))\dx\,,\\
\alpha \hn(J_u)&= \lim_{h\to\infty} \int \limits_\Omega \Big(\frac{d(v_h)}{\varepsilon_h}+a\,\varepsilon_h^{q-1} |\nabla v_h|^q \Big)\dx\,. \label{2501182309}
\end{align}
\end{subequations}
\end{theorem}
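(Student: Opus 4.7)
The strategy combines an a priori energy bound with the $GSBD^p$ compactness of Theorem~\ref{thm:mainComp} and the $\Gamma$-convergence of Theorem~\ref{teo:gammaconvD}. Since $(u_0, 1)\in W^{1,p}_{u_0}(\Omega;\Rn)\times V_k^1$ is admissible and $(u_k,v_k)$ is a $\zeta_k$-minimiser,
\[
D_k(u_k,v_k)\leq D_k(u_0,1)+\zeta_k\leq\int_\Omega W(1,e(u_0))\,\mathrm{d}x+\zeta_k\leq C
\]
uniformly in $k$. Hence each of the three summands of $D_k(u_k,v_k)$ is equibounded; in particular, using $\varepsilon_k\to 0$ together with the assumptions on $d$, the bound on $\int_\Omega d(v_k)/\varepsilon_k\,\mathrm{d}x$ forces $v_k\to 1$ in measure and then, since $v_k\leq 1$, in $L^1(\Omega)$.

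The principal technical step, and the main obstacle, is to build from $(u_k,v_k)$ an auxiliary sequence $\hat u_k\in GSBD^p$ with equibounded approximate symmetric gradient and jump set. Following the standard phase-field compactness technique (cf.\ Proposition~\ref{prop:compattezza} and \cite{Iur14}), combine the Young-type bound
\[
\frac{d(v_k)}{\varepsilon_k}+a\,\varepsilon_k^{q-1}|\nabla v_k|^q\;\geq\;c_0\,d(v_k)^{1/q'}\,|\nabla v_k|
\]
with the coarea formula to get $\int_0^1 d(t)^{1/q'}\,\hn(\{v_k=t\}\cap\Omega)\,\mathrm{d}t\leq C$. Since $d\geq d(1/2)>0$ on $[0,1/2]$, pigeonhole yields a threshold $t_k\in(1/4,1/2)$ with $\hn(\{v_k=t_k\})\leq C$. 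Setting $\hat u_k:=u_k\,\chi_{\{v_k>t_k\}}$ produces $\hat u_k\in SBV^p(\Omega;\Rn)\subset GSBD^p(\Omega)$ with $J_{\hat u_k}\subset\{v_k=t_k\}$ uniformly bounded in $\hn$-measure, $\int_\Omega|e(\hat u_k)|^p\,\mathrm{d}x\leq C$ (exploiting the lower bound $W(v,\xi)\geq v(c_1|\xi|^p-c_2)$ and $v_k>1/4$ on $\{v_k>t_k\}$), and $\mathcal{L}^n(\{\hat u_k\neq u_k\})\leq\mathcal{L}^n(\{v_k\leq 1/2\})\to 0$ thanks to $v_k\to 1$ in $L^1$. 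To encode the Dirichlet condition, extend $u_k,v_k$ by $u_0,1$ on an open set $\widetilde\Omega\supset\Omega$ with $\widetilde\Omega\cap\partial\Omega=\dod$: the trace matching on $\dod$ ensures that the extended $\tilde u_k$ lies in $W^{1,p}(\widetilde\Omega;\Rn)$, and the same construction yields $\hat u_k\in GSBD^p(\widetilde\Omega)$ with the same uniform estimates.

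Theorem~\ref{thm:mainComp} then applies on $\widetilde\Omega$: extracting a subsequence, the set $A^\infty:=\{x\in\widetilde\Omega\colon|\hat u_k(x)|\to\infty\}$ has finite perimeter, and there exists $u\in GSBD^p(\widetilde\Omega)$ with $u=0$ on $A^\infty$ such that $\hat u_k\to u$ $\mathcal{L}^n$-a.e.\ on $\widetilde\Omega\setminus A^\infty$ and $\hn(J_u\cup\partial^*A^\infty)\leq\liminf_k\hn(J_{\hat u_k})$. Since $\hat u_k=u_0\in W^{1,p}$ on $\widetilde\Omega\setminus\Omega$, one has $A^\infty\subset\Omega$ and $u=u_0$ outside $\Omega$; the Dirichlet datum is thereby inherited in the generalised sense. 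Combined with $\mathcal{L}^n(\{\hat u_k\neq u_k\})\to 0$, this gives $u_k\to u$ in measure on $\Omega\setminus A^\infty$, hence (up to extraction) $\mathcal{L}^n$-a.e. The inclusion $\partial^*A^\infty\subset J_u$ follows from the minimality of $u$ argued below: a trace of $u$ from $\Omega\setminus A^\infty$ onto a positive-$\hn$-measure subset of $\partial^*A^\infty$ equal to the value $0$ on the inside would allow one to shrink $A^\infty$ without modifying the bulk energy while strictly decreasing the surface term, contradicting minimality.

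Minimality and the separate convergences \eqref{2501181456} follow from the $\Gamma$-convergence machinery. The $\Gamma$-$\liminf$ of Theorem~\ref{teo:gammaconvD} yields $D(u,1)\leq\liminf_k D_k(u_k,v_k)$; for any competitor $u'\in GSBD^p(\Omega)$, the $\Gamma$-$\limsup$ provides a recovery sequence $(u'_k,v'_k)$ with $D_k(u'_k,v'_k)\to D(u',1)$, and the quasi-minimality $D_k(u_k,v_k)\leq D_k(u'_k,v'_k)+\zeta_k$ yields $\limsup_k D_k(u_k,v_k)\leq D(u',1)$; by the arbitrariness of $u'$, $u$ minimises $D$ and $D_k(u_k,v_k)\to D(u,1)$. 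The two inequalities \eqref{eqs:0407170004}, proven independently in Theorem~\ref{teo:gammaconvG} via Ioffe-Olech semicontinuity (bulk, using $v_k^{1/p}e(u_k)\rightharpoonup e(u)$ in $L^p$) and via slicing together with the coarea formula (surface), have left-hand sides summing to $D(u,1)$ (the Dirichlet jumps $\dod\cap\{\tr u\neq\tr u_0\}$ being absorbed into $J_u$ through the extension to $\widetilde\Omega$) and right-hand sides summing to $\lim_k D_k(u_k,v_k)=D(u,1)$; a standard $\liminf/\limsup$ bookkeeping then forces equality in each, yielding \eqref{2501181456}. The differentiability of $W$ in its first variable is used at this stage to rule out concentration phenomena in the bulk term as $v_k\to 1$.
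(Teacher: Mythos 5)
The first half of your argument is sound and is essentially the paper's route: the uniform bound $D_k(u_k,v_k)\le\int_\Omega W(1,e(u_0))\dx+\zeta_k$, the selection of a level of $v_k$ whose superlevel set has controlled perimeter (the paper fixes one level $\ol s$ by Fatou along a subsequence, you take $k$-dependent levels $t_k$ by coarea and pigeonhole — an immaterial difference), the truncation $\hat u_k=u_k\chi_{\{v_k>t_k\}}$, and the application of Theorem~\ref{thm:mainComp}, with the extension by $(u_0,1)$ across $\dod$ being a legitimate way to encode the Dirichlet mismatch. The genuine gap is in your concluding step. You invoke the $\Gamma$-$\liminf$ of Theorem~\ref{teo:gammaconvD}, and later the inequalities \eqref{eqs:0407170004} of Theorem~\ref{teo:gammaconvG}, for the sequence $(u_k,v_k)$ with limit $(u,1)$. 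Both are proved under the hypothesis that $u_k\to u$ \emph{in measure in $\Omega$}, and this is precisely what may fail here: $|u_k|\to+\infty$ on $A^\infty$ and nothing excludes $\mathcal{L}^n(A^\infty)>0$ (this possibility is the whole reason the statement introduces $A^\infty$ and sets $u=0$ there). So $D(u,1)\le\liminf_k D_k(u_k,v_k)$ is not justified by citation, and consequently neither the minimality of $u$, nor \eqref{2501181456}, nor $\partial^*A^\infty\subset J_u$ follow. The bound you do have from Theorem~\ref{thm:mainComp}, $\hn(J_u\cup\partial^*A^\infty)\le\liminf_k\hn(J_{\hat u_k})$, is no substitute: by your pigeonhole construction $\hn(J_{\hat u_k})$ is controlled by the phase-field surface energy only up to a factor of order $d(1/2)^{-1/q'}$, not with the sharp constant $\alpha$.

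What is missing is the step the paper performs at this point: having identified the pointwise limit $u$ on $\Omega\sm A^\infty$ (and $u=0$ on $A^\infty$), one reruns the one-dimensional slicing lower-semicontinuity argument of Theorem~\ref{teo:gammaconvG} taking $A^\infty$ into account (on almost every line, $v_k$ must dip to small values both at jump points of the slice of $u$ and at points where the slice of $u_k$ blows up, and the slicing is done on the extended domain so as to catch the mismatch on $\dod$). This yields the sharp inequalities $\int_\Omega W(1,e(u))\dx\le\liminf_k\int_\Omega W(v_k,e(u_k))\dx$ and $\alpha\big(\hn(J_u\cap(\Omega\sm A^\infty))+\hn(\partial^*A^\infty)\big)\le\liminf_k\int_\Omega\big(\tfrac{d(v_k)}{\varepsilon_k}+a\,\varepsilon_k^{q-1}|\nabla v_k|^q\big)\dx$. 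Since $u=0$ on $A^\infty$, $J_u\subset\big(J_u\cap(\Omega\sm A^\infty)\big)\cup\partial^*A^\infty$ up to $\hn$-null sets, whence $D(u,1)\le\liminf_k D_k(u_k,v_k)$; only then does your (correct) $\Gamma$-$\limsup$/quasi-minimality comparison give minimality of $u$ and $D_k(u_k,v_k)\to D(u,1)$, and forcing equality in each of the two inequalities and in the set inclusion yields both \eqref{2501181456} and $\partial^*A^\infty\subset J_u$. Your separate variational argument for the latter inclusion ("shrink $A^\infty$") is not needed and, as phrased, does not parse: $u$ is a fixed function and there is no set to shrink; the inclusion is a consequence of the energy bookkeeping above. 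A side remark: differentiability of $W$ in its first variable is not used "to rule out concentration as $v_k\to1$"; in the paper it enters the coarea estimate through $\partial_s W$, while in your route (monotonicity in $s$ plus the lower bound in \eqref{2607172253} on $\{v_k>t_k\}$) it plays no role.
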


\begin{proof}
 Since $D_k(u_0,1)= C_0$, where $C_0:=\int_\Omega W(1, e(u_0)) \dx$, 
 we have that 
 $v_k \to 1$ in $L^2(\Omega)$ and that (\textit{cf.}~\cite[Theorem~4]{Cha04})
 \begin{equation*}
 C_0 \geq D_k(u_k,v_k)\geq \int_0^1 \bigg(\int_{\{v_k>s\}}  \partial_s W(s, e(u_k))\dx +\frac{\alpha}{2} \, d(s)^{1/q'}  \hn(\partial^*\{v_k>s\}) \bigg) \mathrm{d}s\,,
\end{equation*}   
employing the coarea formula (recall also $W \geq 0$) and the fact that, by Young's inequality, it holds
\begin{equation*}
\frac{d(v_k)}{\varepsilon_k}+\varepsilon_k^{q-1} |\nabla v_k|^q \geq  \frac{\alpha}{2}  d(v_k)^{1/q'} |\nabla v_k|\,. 
\end{equation*}
By Fatou's lemma, since $W$ is nondecreasing in the first variable,  $\liminf_{k}\hn(\partial^*\{v_k>s\})$ is bounded for $\mathcal{L}^1$-a.e.\ $s\in (0,1)$, so we fix $\ol s \in (0,1)$ satisfying this property and then, up to a subsequence, $\hn(\partial^*\{v_k>\ol s\})\leq C$. By the minimality of $v_k$, recalling that $d$ is decreasing, we deduce also 
\begin{equation}\label{2401181618}
\mathcal{L}^n(\{v_k< \ol s\})\leq \frac{\varepsilon_k}{d(\ol s)}\,C_0\,.
\end{equation}
Therefore the sequence $\tilde{u}_k:=u_k \, \chi_{\{v_k > \ol s\}}$ satisfies the hypotheses of Theorem~\ref{thm:mainComp}, and so there are $A^\infty=\{x\in \Omega\colon\, |\tilde{u}_k(x)|\to \infty\}$, with finite perimeter, and $u\in GSBD^p(\Omega)$ with $u$ any (fixed) infinitesimal rigid motion on $A^\infty$
such that 
$\tilde{u}_k\to u$ $\mathcal{L}^n$-a.e.\ in $\Omega\sm A^\infty$, and $e(\tilde{u}_k)\weak e(u)$ in $L^p(\Omega\sm A^\infty;\Mnn)$. In particular, employing \eqref{2401181618},
 we have that
\begin{equation}\label{2401181630}
\begin{split}
A^\infty=\{x\in \Omega\colon\, |u_k(x)|\to \infty\}\,,
\qquad u_k \to u \quad\text{$\mathcal{L}^n$-a.e.\ in } \Omega\sm A^\infty\,.
\end{split}
\end{equation}
Since now we have determined the pointwise limit of $u_k$, we can follow standard arguments, employing a slicing technique as in Theorem~\ref{teo:gammaconvG} or \cite[Theorem~8]{Iur14} 
(see also \cite[Theorem~1.1]{CC18}),
to obtain that   
\begin{subequations}\label{2510181438}
\begin{align}
\int \limits_{\Omega} W(1, e(u))\dx &\leq \liminf_{k\to \infty} \int \limits_{\Omega} W(v_k, e(u_k)) \dx\,, \label{2501181426}\\
 \alpha \Big(\hn\big(J_u \cap (\Omega\sm A^\infty)\big)+\hn(\partial^*A^\infty) \Big) &\leq \liminf_{k\to\infty} \int \limits_\Omega \Big(\frac{d(v_k)}{\varepsilon_k}+a\,\varepsilon_k^{q-1} |\nabla v_k|^q \Big)\dx\,. \label{2501181431}
\end{align}
\end{subequations}
In particular, observing $J_u \subset \big( J_u \cap (\Omega\sm A^\infty)\big) \cup\partial^*A^\infty$, we have
\begin{equation}\label{2401181636}
D(u,1)\leq \liminf_{k\to \infty} D_k(u_k,v_k)=\liminf_{k\to \infty} \min D_k\,.
\end{equation}
Since $(D_k)_k$ $\Gamma$-converges to $D$ with respect to the topology of the convergence in measure we obtain (\textit{cf.}~\cite[Proposition~7.1]{DMLibro}) that 
\begin{equation*}
\inf_{GSBD^p(\Omega)} D \geq \liminf_{k\to \infty} \min D_k=\liminf_{k\to \infty} D_k(u_k,v_k)\,.
\end{equation*}
Therefore we have that $u$ is a minimiser for $D$ 
and that, up to considering a subsequence $u_h=u_{h_k}$ of $u_k$, 
\begin{equation*}
D(u,1)=\lim_{h\to\infty}D_h(u_h,v_h)\,.
\end{equation*}
In particular the conditions \eqref{2510181438} hold as equalities on $(u_h,v_h)$, so we get that $\partial^*A\subset J_u$ and deduce \eqref{2501181456}.
\end{proof}
\begin{remark}
For any $(u_k,v_k)$ with $D_k(u_k, v_k)\leq M$, we may extract a subsequence $(u_h,v_h)$ converging pointwise to a function $u$, outside an exceptional set $A^\infty$ where $|u_h|$ converge to $+\infty$, such that the conditions \eqref{2510181438} hold. 
\end{remark}

\bigskip
\noindent {\bf Acknowledgements.}
V.\ Crismale has been supported by a public grant as part of the \emph{Investissement d'avenir} project, reference ANR-11-LABX-0056-LMH, LabEx LMH, and is currently funded by the Marie Sk\l odowska-Curie Standard European Fellowship No.\ 793018. The authors wish to thank the anonymous referees for their valuable comments.

\medskip
\noindent {\bf Conflict of interest and ethical statement.} The authors declare that they have no conflict of interest and guarantee the compliance with the Ethics Guidelines of the journal.
\bigskip

\end{document}